\newline\textbf{BEGIN: AUX-PROOF}\dotfill\newline}
\newline\textbf{END: AUX-PROOF}\dotfill\newline}
\newtheorem{theorem}{Theorem}[section]
\newtheorem{lemma}[theorem]{Lemma}
\newtheorem{proposition}[theorem]{Proposition}
\newtheorem{corollary}[theorem]{Corollary}
\newtheorem{question}[theorem]{Question}
\theoremstyle{definition}
\newtheorem{definition}[theorem]{Definition}
\newtheorem{remark}[theorem]{Remark}
\newcommand{\Tr}{\textrm{Tr}}
\newcommand{\supp}{{\rm supp}}
\newcommand{\Besov}{\dot{B}}
\title[Higher order perturbation estimates]{Higher order perturbation estimates in quasi-Banach Schatten spaces through wavelets}
\author{Martijn Caspers}
\author{Emiel Huisman}
\address{TU Delft, EWI/DIAM,
	P.O.Box 5031,
	2600 GA Delft,
	The Netherlands}
\email{M.P.T.Caspers@tudelft.nl}
\email{E.Huisman-1@student.tudelft.nl}
\date{\today. MSC2010 keywords: 47B10, 47L20, 47H60. MC is supported by the NWO Vidi grant
VI.Vidi.192.018 ‘Non-commutative harmonic analysis and rigidity of operator algebras’.}
\begin{document}

\maketitle

\begin{abstract}
 Let $n \in \mathbb{N}_{\geq 1}$. Let $1 \leq p_1, \ldots, p_n < \infty$ and set the H\"older combination $p := (p_1; \ldots ; p_n) := \left( \sum_{j=1}^n  p_j^{-1} \right)^{-1}$.
 Assume further that  $0 < p \leq 1$ and that for  the H\"older combinations of $p_2$ to $p_n$ and $p_1$ to $p_{n-1}$ we have, 
 \[
 1 \leq (p_2; \ldots ; p_n), (p_1; \ldots ; p_{n-1}) < \infty. 
 \]
 Then there exists a constant $C> 0$ such that for every   $f \in C^n(\mathbb{R}) \cap \Besov_{\frac{p}{1-p}, p}^{n-1 + \frac{1}{p}}$ with  $\Vert f^{(n)} \Vert_\infty < \infty$ we have 
\[
\Vert  T_{f^{[n]}}: S_{p_1} \times \ldots \times S_{p_n} \rightarrow S_p    \Vert  \leq C ( \Vert f^{(n)} \Vert_\infty +  \Vert f \Vert_{\Besov_{\frac{p}{1-p}, p}^{n-1 + \frac{1}{p}}}).
\]
 Here $S_q$ is the Schatten von Neumann class, $\Besov_{p,q}^s$ the  homogeneous Besov space, and $T_{f^{[n]}}$ is the multilinear Schur multiplier of the $n$-th order divided difference function. In particular, our result holds for $p=1$ and any $1 \leq p_1, \ldots, p_n < \infty$ with $p = (p_1; \ldots; p_n)$. 
\end{abstract}

\section{Introduction}

In the 1950's and early 1960's  M.G. Krein \cite{Krein1,Krein2}  and I. Lifschitz \cite{Lifschitz} laid the foundation of perturbation theory based on the, at the time, relatively new theory of operator algebras (see also Daleckii and S.G. Krein \cite{DaletskiKrein1, DaletskiKrein2}). Their ideas were crucial in the analysis of spectral properties of a Hamiltonian that is perturbed with a  potential function. This led in particular to the introduction of the spectral shift function. Perturbation theory is now  integral to quantum mechanics and  has a wide range of applications in mathematics, mathematical physics and close connections to modern noncommutative geometry and spectral action (see e.g. \cite{Suijlekom, Nuland2}). Intimately related, is also the analysis of commutator estimates,   differentiability of continuous functional calculus and noncommutative Taylor expansions (a detailed account is given   below).  For an overview of the field until the turn of the millennium and survey of spectral shift we refer to \cite{Gesztesy, BirmanP}.  

Of central importance in the theory is the concept of a divided difference function 
\begin{equation}\label{Eqn=IntroFn}
\left\{ f^{[1]}(s,t) := \frac{f(s) - f(t)}{s - t} \right\}_{s,t \in \mathbb{R}, s \not = t}
\end{equation}
and an associated linear map   which is either a {\it Schur multiplier} or a closely related {\it double operator integral} \cite{SkripkaTomskova}. The boundedness of such Schur multipliers, acting on a noncommutative $L^p$-space,  implies directly three of the problems going back to Krein and Lifschitz: (1) commutator estimates, (2) Lipschitz properties of functional calculus and (3) the existence of spectral shift. Though this was known ever since the founding work of Lifschitz and Krein the theory has seen a development of more than 70 years with a number of spectacular highlights going well into the last decade. Especially the close connection with harmonic analysis and transference techniques has solved many of the most important problems in this direction (see e.g. \cite{PS-Acta}, \cite{PSS-Inventiones}, \cite{CPSZ-AJM}, \cite{CGPT-Annals} for some prominent examples).

Let us concretely state the problem that we shall be addressing and an overview of the results so far. We consider the symbol \eqref{Eqn=IntroFn} with $f$ Lipschitz and Lipschitz constant $\Vert f' \Vert_\infty < \infty$. Our main question, in the linear case,  is under which further regularity conditions of $f$ we have that 
\begin{equation}\label{Eqn=Tf}
T_{f^{[1]}}: S_p \rightarrow S_p: \{ x_{s,t} \}_{s,t \in \mathbb{R}} \mapsto \{ f^{[1]}(s,t) x_{s,t} \}_{s,t \in \mathbb{R}},   
\end{equation}
is bounded on the Schatten von Neumann class $S_p$. Here we write  $\{ x_{s,t} \}_{s,t \in \mathbb{R}}$ for the kernel, if existent,  of an operator on $L^2(\mathbb{R})$, details can be found in Section \ref{Sect=Preliminaries}.  Now if $p = 1,   \infty$ then \eqref{Eqn=Tf} was conjectured to be bounded in \cite{KreinConjecture}. The conjecture was then disproved by Farforovskaya (see \cite{Farforovskaya1, Farforovskaya2, Farforovskaya3}). In fact already for $f$ the absolute value map the linear operator   \eqref{Eqn=Tf}  was shown to be unbounded by Kato \cite{Kato} and Davies \cite{Davies}.

The first positive results are in fact also the most relevant ones to this   paper. In \cite{BirmanSolomyak} Birman and Solomyak  proved that for $p=1$ we have that $\eqref{Eqn=Tf}$ is bounded for any $f \in C^{1+\varepsilon}, \varepsilon >2$ and so in particular if $f$ is a $C^2$ function that is Lipschitz boundedness holds. This result was then improved by Peller \cite{Peller} who showed boundedness of \eqref{Eqn=Tf} at $p=1$ whenever $f$ belong to the Besov class $\Besov^1_{\infty 1}$. Peller's result is our result in the linear case; but we emphasize that we do not give a new proof but in fact we use Peller his result as a starting point for an inductive reduction procedure to the higher order case. Peller's result was also revisited recently in \cite{McDonaldSukochev}. Slightly weaker sufficient conditions were also given in \cite{Arazy}.

Thereafter for $1<p <\infty$ a complete solution was found in \cite{PS-Acta} and boundedness of \eqref{Eqn=Tf} holds if and only if $f$ is Lipschitz.  That means that boundedness follows under minimal regularity conditions on $f$. After \cite{PS-Acta} the endpoint estimates in weak $L^1$- and BMO-spaces as well as best constants were found in \cite{CMPS-JFA,  CSZ-AIF,  CJSZ-JFA, CPSZ-AJM}. A remarkably short proof of the main result of \cite{PS-Acta} was found recently in  \cite{CGPT-Annals, GPPR} where this follows from a general H\"ormander-Mikhlin-Schur multiplier theorem   (see also \cite{ParcetICM}).  The multilinear analogue of this problem was solved in \cite{PSS-Inventiones}.    Important applications to higher order Fr\'echet differentiability of Schatten norms were consequently found in \cite{PotapovSukochev-Frechet, PSTZ-Transactions}.
The result from \cite{PSS-Inventiones} was sharpened in the bilinear case in \cite{CaspersReimann}. For the absolute value estimate a weak $L^1$-end point estimate was then obtained in \cite{CSZ-Israel}. Finally we mention that in the multilinear case at $p=\infty$  characterisations of Schur multipliers and operator integrals are available that go back to Grothendieck, see e.g. \cite{JuschenkoTodorovTurowska, Coine}.

For a long time it was not known whether in  the regime $0 < p < 1$ bounds of  Schur multipliers of divided differences can be found. In this range the Schatten spaces become quasi-Banach spaces and fail to be locally convex. In \cite{McDonaldSukochev} a fully satisfying answer was provided stating that indeed \eqref{Eqn=Tf} is bounded in the range $0 < p < 1$ as long as $f$ is a Lipschitz function belonging to the homogeneous Besov space $\Besov_{p^\sharp, p}^{\frac{1}{p}}$ with $p^\sharp = \frac{p}{1-p}$.   Their work was preceded by important work  on  $L^p$-spaces with $p < 1$, in particular by  Aleksandrov and Peller \cite{AleksandrovPeller1, AleksandrovPeller2, Peller87}, and further  \cite{Rotfeld1, Rotfeld2, Huang,  Ricard, PisierRicard}.

 Here we provide a first analysis in the multilinear situation. In Euclidean harmonic analysis it is now a well established fact that many natural multilinear Fourier multipliers admit bounds in $L^p$-spaces with $p$ in the quasi-Banach regime. This holds true for H\"ormander-Mikhlin type multipliers \cite{GrafakosTorres} or bilinear Hilbert transforms \cite{Lacey}. In the noncommutative or completely bounded setting such properties simply fail  as was shown in  \cite[Section 5]{CKV-Canada} and thus   one is required to put further regularity conditions on the function $f$ just as in Peller's original result \cite{Peller}.  

\vspace{0.3cm}

The main theorem we prove is the following (see Theorem \ref{Thm=Main}). 

\begin{theorem}\label{Thm=MainIntro}
 Let $n \in \mathbb{N}_{\geq 1}$, let $1 \leq p_1, \ldots, p_n < \infty$ and set $p := (p_1; \ldots ; p_n)$.
 Assume further that, 
 \begin{equation}\label{Eqn=RangeOfPs}
0< p \leq 1, \:\: 1 \leq (p_2; \ldots ; p_n), (p_1; \ldots ; p_{n-1}) < \infty.
 \end{equation}
There exists a constant $C>0$ such that for every $f \in C^n(\mathbb{R}) \cap \Besov_{\frac{p}{1-p}, p}^{n-1 + \frac{1}{p}}$ such that $\Vert f^{(n)} \Vert_\infty < \infty$ we have that,  
\[
\Vert  f^{[n]}   \Vert_{\mathfrak{m}_{p_1, \ldots,  p_n}} \leq C(  \Vert f^{(n)} \Vert_\infty +  \Vert f \Vert_{\Besov_{\frac{p}{1-p}, p}^{n-1 + \frac{1}{p}}}).
\]
\end{theorem}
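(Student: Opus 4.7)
The plan is to proceed by induction on the multilinearity order $n$. The base case $n=1$ is handled by Peller's theorem \cite{Peller} when $p = 1$ and by its quasi-Banach extension due to McDonald--Sukochev \cite{McDonaldSukochev} when $0 < p < 1$; both deliver exactly the bound
\[
\|T_{g^{[1]}}\colon S_{q_1} \to S_q\| \lesssim \|g'\|_\infty + \|g\|_{\Besov_{q/(1-q), q}^{1/q}}
\]
that the theorem specialises to at $n=1$. As the introduction emphasises, we do not re-prove this, but rather use it as the starting point of a reduction that peels off one variable at a time.

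For the inductive step I would exploit the classical recurrence for divided differences
\[
f^{[n]}(x_0, \ldots, x_n) = \frac{f^{[n-1]}(x_0, \ldots, x_{n-1}) - f^{[n-1]}(x_1, \ldots, x_n)}{x_0 - x_n},
\]
which, at the level of kernels, writes $T_{f^{[n]}}(A_1, \ldots, A_n)$ as a difference of the $(n-1)$-linear Schur multiplier for $f^{[n-1]}$ followed (resp.\ preceded) by multiplication with $A_n$ (resp.\ $A_1$), all divided by $s_0 - s_n$. The quotient by $s_0 - s_n$ is recognised as a first-order divided-difference structure in the outer variables, to which the linear Peller/McDonald--Sukochev bound applies. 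The symmetric hypothesis that both $(p_2; \ldots; p_n)$ and $(p_1; \ldots; p_{n-1})$ belong to $[1, \infty)$ is exactly what makes both sides of the peeling decomposition land in a Banach Schatten class, so that the inductive hypothesis at order $n-1$ produces a bounded $(n-1)$-linear multiplier; a Hölder inequality in Schatten norms together with the linear $n=1$ result then combines these pieces into the desired estimate into $S_p$.

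The wavelet decomposition advertised in the title enters in two complementary ways. First, it supplies the quantitative scale-by-scale bounds on the linear first-order divided-difference piece, in the spirit of \cite{Peller, McDonaldSukochev}: writing $f = \sum_j f_j$ with $f_j$ localised at dyadic frequency $2^j$, the Bernstein inequality yields $\|f_j^{(n)}\|_\infty \sim 2^{jn} \|f_j\|_\infty$, and summing the resulting scale-wise estimates in $\ell^p$ reconstructs precisely the homogeneous Besov norm $\|f\|_{\Besov_{p/(1-p), p}^{n-1 + 1/p}}$ on the right-hand side. Second, the wavelet characterisation separates the low-frequency part of $f$ (whose contribution is controlled by $\|f^{(n)}\|_\infty$) from the high-frequency part (controlled by the Besov norm), producing the two-term bound in the statement.

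The principal obstacle is the quasi-Banach geometry in the regime $p < 1$: $S_p$ is not locally convex and satisfies only the $p$-triangle inequality $\|x + y\|_p^p \le \|x\|_p^p + \|y\|_p^p$, so every decomposition must be reassembled at the level of $p$-norms. This is precisely why the second summation index in the Besov norm on the right-hand side equals $p$, and one has to carefully track quasi-norm constants through each inductive step so that they do not deteriorate. A secondary technicality is that the divided-difference recurrence is valid only off the diagonal $s_0 = s_n$; a limiting/continuity argument is needed to extend the identity to the diagonal, and it is in this limit that the $\|f^{(n)}\|_\infty$ term appears explicitly.
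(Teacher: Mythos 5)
Your high-level description of the architecture --- induct on $n$ with the linear Peller/McDonald--Sukochev estimates as the base case, run a wavelet decomposition scale by scale, and use the hypothesis that $(p_2;\ldots;p_n)$ and $(p_1;\ldots;p_{n-1})$ land in $[1,\infty)$ so that the inductive step stays in Banach territory --- matches the paper. But the inductive step as you describe it would not go through, for two connected reasons.

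First, the recurrence $f^{[n]}(x_0,\ldots,x_n) = (f^{[n-1]}(x_0,\ldots,x_{n-1}) - f^{[n-1]}(x_1,\ldots,x_n))/(x_0-x_n)$ cannot be used directly at the level of Schur multipliers, because the kernel $(s_0,s_n)\mapsto 1/(s_0-s_n)$ is not a bounded $S_p$-multiplier: it is the triangular-truncation/Hilbert-transform obstruction that already kills the $p=1,\infty$ endpoints in the linear problem. You assert that the quotient by $s_0-s_n$ is ``a first-order divided-difference structure,'' but $1/(s-t)$ is not $g^{[1]}(s,t)$ for any $g$; the numerator and denominator do not come from the same scalar function evaluated at two points. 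The paper circumvents exactly this by inserting a smooth cutoff $\rho_R(t_k-t_{k+1})$ and expanding $1$ via a telescoping product $\sum_{k=0}^{n-1}\rho_R(t_0-t_1)\cdots\rho_R(t_{k-1}-t_k)(1-\rho_R)(t_k-t_{k+1}) + \rho_R(t_0-t_1)\cdots\rho_R(t_{n-1}-t_n)$. On each off-diagonal piece the factor $(1-\rho_R)(t_k-t_{k+1})/(t_{k+1}-t_k)$ \emph{is} a bounded $S_{p_{k+1}}$-multiplier (Lemma~\ref{Lem=OneMinRho}), and only there does the divided-difference recursion --- in consecutive adjacent variables $(t_k,t_{k+1})$, not the outer pair --- safely drop the order by one.

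Second, and more seriously, your proposal says nothing about the remaining piece, the block-diagonal term $\phi_\alpha^{[n]}\,\rho_R(t_0-t_1)\cdots\rho_R(t_{n-1}-t_n)$, which is where the bulk of the paper's new work sits. After the cutoff expansion one is left with a genuinely $n$-linear multiplier supported near the full diagonal $t_0\approx t_1\approx\cdots\approx t_n$, which cannot be reduced in order. The paper handles it by localizing to integer blocks $[l,l+1)^{n+1}$ (Proposition~\ref{Prop=Diagonal}), transferring to the torus via the Cayley transform and the PSS change-of-variables formula, and Fourier-expanding $\varphi^{[n]}$ on $\mathbb{T}^{n+1}$, exploiting the combinatorial bound $|\mathcal{A}_{n,k}|\lesssim(1+|k|)^n$ to get summability of $\ell^p$ Fourier coefficients under $n+1<\beta p$ (Propositions~\ref{Prop=MOI}--\ref{Prop=WaveletReal}). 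A ``limiting/continuity argument'' does not touch this; without it the induction never closes.

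Finally, a smaller point: the two terms $\|f^{(n)}\|_\infty+\|f\|_{\Besov}$ on the right-hand side do not arise as a low-/high-frequency split of the wavelet series. All wavelet scales $f_j$ contribute only to the Besov term. The $\|f^{(n)}\|_\infty$ term appears because wavelet coefficients of low-degree polynomials vanish, so $f$ decomposes as $P+\sum_j(f_j-\text{Taylor corrector})$ with $P$ a degree-$\le n$ polynomial (Lemma~\ref{Lem=BesovApprox}); the constant $P^{(n)}(0)$ is precisely the source of the $\|f^{(n)}\|_\infty$ contribution.
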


This theorem is new already in case $p = 1$ and we are still in the Banach space setting. In this case \eqref{Eqn=RangeOfPs} is void and  our theorem gives sufficient regularity conditions to yield a bound on the norms of  Schur multipliers of divided differences. In case $p \in (0,1)$ our result is to the knowledge of the authors the  first noncommutative multilinear result of a Schur multiplier whose recipient space is a quasi-Banach $L^p$-space. However the question whether the restriction \eqref{Eqn=RangeOfPs} can be removed at the expense of having stronger Besov regularity assumptions on $f$  remains an open problem, see Section \ref{Sect=Discussion}.

The techniques we use in this paper are largely inspired by the wavelet approach that is taken in \cite{McDonaldSukochev} (see also \cite{McDonaldWavelets}). The main novelty is a reduction theorem (Theorem \ref{Thm=InductiveStep}) that shows that the symbols appearing in   Theorem \ref{Thm=MainIntro} for $n$ exponents split into two parts: (1) symbols that are $n-1$-th order divided differences and (2) a genuinely $n$-linear multiplier that is concentrated on a block diagonal. The first part can be estimated inductively and for the second part we carry out a wavelet analysis. It turns out that both parts can be controlled by the same homogeneous Besov norm, with the same exponents. This eventually results in Theorem \ref{Thm=MainIntro}.

\vspace{0.3cm}

\noindent {\bf Contents.} In Section \ref{Sect=Preliminaries} we recall the preliminaries. Section \ref{Sect=CoreEstimate} contains the core estimate behind \eqref{Eqn=RangeOfPs} and most of the new results of this paper. Then in Section \ref{Sect=MainResult} we prove Theorem \ref{Thm=MainIntro} using a multilinear analogue of the strategy in \cite{McDonaldSukochev}.

\vspace{0.3cm}

\noindent {\bf Acknowledgments.} The authors wish to thank the anonymous referee for a detailed report that led to an improvement of our manuscript.

\section{Preliminaries}\label{Sect=Preliminaries}

\subsection{General notation} To ensure compatibility with the existing literature we assume all Hilbert spaces are complex and separable. 

We let $\chi_A$ be the indicator function on a set $A$. $\mathbb{T}$ denotes the torus which we identify with the unit circle on $\mathbb{C}$. We let $C^n(\mathbb{R})$ or  $C^n(\mathbb{T})$ be the space of $n$ times continuously differentiable real valued functions.  We let $C_c^n(\mathbb{R})$ be those functions that have moreover compact support. For a function $\psi \in L^2(\mathbb{R})$ or  $\psi \in L^2(\mathbb{T})$ we denote $\widehat{\psi}$ for its Fourier transform which lies in $L^2(\mathbb{R})$ and $\ell^2(\mathbb{Z})$, respectively.  

The symbol $\preceq$ stands for an inequality that holds up to a constant where the constant may differ line by line. The  constants may depend on preset choices of objects or quantities that appear in the statement of a theorem like $\phi, p, R, n$ but those dependencies do not affect the proof; in Section \ref{Sect=CoreEstimate}  for instance it is only relevant that the inequalities that appear are  independent of $\alpha$ and $\lambda$. We sometimes write expressions like  $\preceq_{\phi,n}$ to clarify explicitly that a constant depends on $\phi$ and $n$.  We use $\approx$ in case we have equality up to a constant. 

\subsection{Homogeneous Besov spaces} Denote by $\mathcal{S}(\mathbb{R})$ the algebra of all Schwartz class functions on $\mathbb{R}$ with its usual Fr\'echet topology and dual  $\mathcal{S}'(\mathbb{R})$ which is the space of tempered distributions. Let $\phi: \mathbb{R} \rightarrow \mathbb{R}$ be  smooth, supported in $[-2, -1+ \frac{1}{7}) \cup (1 - \frac{1}{7}, 2]$ and identically equal to 1 in the set $[-2 + \frac{2}{7}, -1) \cup (1, 2 - \frac{2}{7}]$. 
Assume further that 
\[
\sum_{j \in \mathbb{Z}}  \phi(2^{-j} \xi) = 1, \qquad \xi \not = 0. 
\]
Let $\Delta_j, j \in \mathbb{Z}$ be the operator on $\mathcal{S}'(\mathbb{R})$ of Fourier multiplication by the function $\xi \mapsto \phi(2^{-n} \xi)$; i.e. $\Delta_j f$ with $f \in \mathcal{S}'(\mathbb{R})$ is the tempered distribution whose (distributional) Fourier transform equals $\phi(2^{-n} \: \cdot \: ) \widehat{f}$ where $\widehat{f}$ is the Fourier transform of $f$. The series $\{ \Delta_j f \}_{j \in \mathbb{Z}}$ is called the Littlewood-Payley decomposition of $f$.
Now let $s \in \mathbb{R}$ and $p,q \in (0, \infty]$. We consider the homogeneous Besov space $\Besov^s_{p,q} = \Besov^s_{p,q}(\mathbb{R})$ of distributions $f \in \mathcal{S}'(\mathbb{R})$ for which 
\begin{equation}\label{Eqn=BesovDfn}
\Vert f \Vert_{\Besov^s_{p,q}} := \Vert  \{ 2^{js} \Vert \Delta_j f \Vert_p \}_{j \in \mathbb{Z}} \Vert_{\ell^q(\mathbb{Z})} < \infty.
\end{equation}
A concrete characterisation of Besov spaces, for locally integrable functions, in terms of wavelets shall be recalled in Section \ref{Sect=MainResult}. 


\subsection{Divided differences} 

\begin{definition}
For $f \in C^{n}(\mathbb{R})$ we inductively define the divided difference functions for $k = 1, \ldots, n$ as 
\begin{equation}\label{Eqn=DivDiffDef}
f^{[k]}(t_0, \ldots, t_k) = 
\frac{f^{[k-1]}(t_0, t_2, \ldots, t_k) - f^{[k-1]}(t_1, t_2 \ldots, t_k)}{t_0 - t_1}, \qquad t_0, \ldots,  t_k \in \mathbb{R}, t_0 \not = t_1.
\end{equation}
For $t:= t_0 = t_1$ we set $f^{[k]}(t, t, t_2, \ldots, t_n) = \frac{d}{dt} f^{[k-1]}(t, t_2, \ldots, t_n)$. For $f \in C^n(\mathbb{T})$ and $t_i \in \mathbb{T}$ the same definition defines $f^{[k]}$ on the torus; see the proof of Proposition \ref{Prop=Torus} for explicit expressions.
\end{definition}

Without further notification we shall further use that divided differences are permutation invariant (\cite{SkripkaTomskova, DevLor}), meaning that for any permutation $\sigma$ of the integer numbers between $0$ and $n$ we get that 
\[
f^{[n]}(t_0, \ldots, t_n) = f^{[n]}(t_{\sigma(0)}, \ldots, t_{\sigma(n)}). 
\]
In particular, for $i \not = j$, 
\[
f^{[k]}(t_0, \ldots, t_k) = 
\frac{f^{[k-1]}(t_0, \ldots, \widehat{t_j}, \ldots,  t_k) - f^{[k-1]}(t_1,  \ldots, \widehat{t_i}, \ldots,  t_k)}{t_i - t_j}, 
\]
where the hat notation, i.e. $\widehat{t_i}$ and $\widehat{t_i}$, indicate that the variables $t_i$ and $t_j$ are omitted.

\subsection{Multilinear maps} Let $X_1, \ldots, X_n, X$ be (quasi-)Banach spaces. The bound of a multilinear map $T: X_1 \times \ldots \times  X_n \rightarrow X$ is given by 
\[
\Vert T \Vert = \sup_{x_i \in X_i, \Vert x_i \Vert_{X_i} = 1}  \Vert T(x_1, \ldots, x_n) \Vert_X. 
\]

\subsection{Schatten classes} In this paper we consider noncommutative $L^p$-spaces associated with the bounded operators $B(L^2(\mathbb{R}))$ on the Hilbert space $L^2(\mathbb{R})$. For $x \in B(L^2(\mathbb{R}))$   and $0 < p < \infty$ we set 
\[
\Vert x \Vert_p = {\rm \Tr}(  \vert x \vert^p)^{\frac{1}{p}}.
\]
Then set 
\[
S_p = \{ x \in B(L^2(\mathbb{R})) \mid \Vert x \Vert_p < \infty \}. 
\]
Then $S_p$ consists of compact operators whose singular values form a sequence in $\ell^p$. 
In case $1\leq p <\infty$ these spaces are Banach spaces and in case $0<p < 1$ these spaces are quasi-Banach spaces satisfying the qausi-Banach inequality
\begin{equation}\label{Eqn=Quasi-Banach}
\Vert x + y \Vert_p^p \leq \Vert x \Vert_p^p + \Vert y \Vert_p^p, \qquad x,y \in S_p.
\end{equation}
Note that at the threshold case $p=1$ this is the usual triangle inequality. 
Similarly $\Vert \sum_{j =1}^\infty x_j \Vert_p^p \leq \sum_{j =1}^\infty \Vert  x_j \Vert_p^p$ for infinite converging series of $x_j \in S_p, 0<p<1$. 
For constants $0 < p_1, \ldots, p_n < \infty$ we write $(p_1; \ldots; p_n) = ( \sum_{j = 1}^n  p_j^{-1} )^{-1}$ for their H\"older combination. For $x_1 \in S_{p_1}, \ldots, x_n \in  S_{p_n}, p_i \in (0, \infty)$ we have $x_1 \ldots x_n \in S_p$ with  $p = (p_1; \ldots; p_n)$ and the H\"older estimate holds, 
\[
\Vert x_1 \ldots x_n \Vert_p \leq \prod_{j=1}^n \Vert x_j \Vert_{p_j}.
\]
The space $S_2$ is a Hilbert space that can linearly be identified with $L^2(\mathbb{R}^2)$ by letting $\{ x_{s,t} \}_{s,t \in \mathbb{R}}$ in $L^2(\mathbb{R}^2)$ correspond to $x \in S_2$ determined by  
\[
\langle x \xi, \eta\rangle = \int_{\mathbb{R}^2}  x_{s,t} \xi(s) \overline{\eta(t)} ds dt, \qquad \xi, \eta \in L^2(\mathbb{R}). 
\]
We call $\{ x_{s,t} \}_{s,t \in \mathbb{R}}$ the kernel of $x$.

\subsection{Schur multipliers}  
We recall the following from \cite{CLS-AIF, CKV-Canada}, for which we recall that $S_p \subseteq S_q$ when $0 < p \leq q < \infty$ and this inclusion is dense; in fact the finite rank operators are contained in every $S_p$-space $p \in (0, \infty)$ as a dense subset. 

\begin{definition}
Let $\psi \in L^\infty(\mathbb{R}^{n+1})$ whose variables we label with index 0 to $n$. Consider the multilinear map 
\begin{equation}\label{Eqn=DfnSchur}
T_\psi: S_{2} \times \ldots \times S_{2} \rightarrow S_2
\end{equation}
that maps $x_1, \ldots, x_n \in S_2$ with kernels $\{ x_{1, s_0, s_1 } \}_{s_0, s_1 \in \mathbb{R}}, \ldots, \{ x_{n, s_{n-1}, s_n }\}_{s_{n-1}, s_n \in \mathbb{R}}$ to the operator $T_\psi(x_1, \ldots, x_n) \in S_2$ with kernel 
\[
\{  \int_{\mathbb{R}^{n-1}} \psi(s_0, \ldots, s_n)  x_{1, s_0, s_1} \ldots  x_{n, s_{n-1}, s_n} ds_1 \ldots ds_{n-1} \}_{s_0, s_{n} \in \mathbb{R}}.  
\]
The assignment \eqref{Eqn=DfnSchur} is bounded with norm $\Vert \psi \Vert_{\infty}$ (\cite{CLS-AIF}). We shall denote 
\[
\Vert \psi \Vert_{\mathfrak{m}_{p_1, \ldots, p_n}} := \Vert T_\psi:  S_{p_1} \times \ldots \times S_{p_n} \rightarrow S_p  \Vert, 
\]
which is finite if $T_\psi$ extends to a bounded multilinear map from $(S_{p_1} \cap S_2) \times \ldots \times (S_{p_n} \cap S_2) \rightarrow S_{p}$ to $S_{p_1} \times \ldots \times S_{p_n} \rightarrow S_p$;  otherwise $\Vert \psi \Vert_{\mathfrak{m}_{p_1, \ldots, p_n}} = \infty$. $\psi$ is called the symbol of the Schur multiplier $T_\psi$. 
\end{definition}

 \subsection{Comparison with multiple operator integrals}\label{Sect=Comparison} In the proof of Proposition \ref{Prop=WaveletReal} we need to appeal to the transformation formula from \cite[Theorem 2.7]{PSS-Advances}. As \cite{PSS-Advances} is stated in the framework of multiple operator integals we show that our Schur multipliers appear as special cases of these multiple operator integrals. 

Let $H$ be the unbounded self-adjoint  operator (see \cite[Example 10.36]{NeervenBook} for self-adjointness) acting on $L^2(\mathbb{R})$ with domain 
\[
D(H) = \{ \xi \in L^2(\mathbb{R}) \mid  \int_{\mathbb{R}} \vert t \xi(t) \vert ^2 dt < \infty \}, 
\]
given by $(H\xi)(t) = t \xi(t)$. We have $\sigma(H) = \mathbb{R}$ and for $a: \mathbb{R} \rightarrow \mathbb{C}$ a bounded Borel function the spectral calculus gives $a(H) \xi  = a \xi$ where $\xi \in L^2(\mathbb{R})$. Let $\mathfrak{U}_n^c(\mathbb{R})$ (resp. $\mathfrak{U}_n^c(\mathbb{T})$ ) denote the set of functions $\phi: \mathbb{R}^{n+1} \rightarrow \mathbb{C}$ (resp. $\phi: \mathbb{T}^{n+1} \rightarrow \mathbb{C}$) that admit a representation 
\begin{equation}\label{Eqn=Representation}
\phi(s_0, \ldots, s_n) = \int_\Omega \prod_{j=0}^n a_j(s_j, \omega) d\mu(\omega), 
\end{equation}
for some finite measure space $(\Omega, \mu)$ and bounded continuous functions $a_j( \cdot , \omega): \mathbb{R} \rightarrow \mathbb{C}$ (resp. $a_j( \cdot , \omega): \mathbb{T} \rightarrow \mathbb{C}$). For $\phi \in \mathfrak{U}_n^c(\mathbb{R})$ with representation \eqref{Eqn=Representation} we set the multiple operator integral 
\begin{equation}\label{Eqn=MOINew}
T_\phi^H: S_{p_1} \times \ldots \times S_{p_n} \rightarrow S_p, \qquad p = (p_1; \ldots; p_n), \quad  1 \leq p, p_1, \ldots, p_n \leq \infty,  
\end{equation}
as follows
\[
T_\phi^H(x_1, \ldots, x_n) = \int_\Omega a_0(H, \omega) V_1 a_1(H, \omega) \ldots V_n a_n(H, \omega) d\mu(\omega), \qquad x_i \in S_{p_i}, \quad i = 1, \ldots, n. 
\]
Because of our choice of $H$ we have that $a_j(H, \omega)$ is the multiplication operator with function $ a_j( \: \cdot \: , \omega)$. 

\begin{lemma}\label{Lem=Comparison}
Let $\phi \in \mathfrak{U}_n^c(\mathbb{R})$. Then for every $x_1, \ldots, x_n \in S_2$ we have 
\begin{equation}\label{Eqn=Comparison}
T_\phi(x_1, \ldots, x_n) = T_\phi^H(x_1, \ldots, x_n). 
\end{equation}
\end{lemma}
\begin{proof}
First note that as $S_2 \subseteq S_{2n}$ the right hand side of \eqref{Eqn=Comparison} can be interpreted as \eqref{Eqn=MOINew} with $p_1 = \ldots = p_n = 2n$ and $p =2$. Moreover, as the inclusion $S_2 \subseteq S_{2n}$ is contractive $T_\phi^H$ yields a bounded map $S_2 \times \ldots \times S_2 \rightarrow S_2$ and so does $T_\phi$ by construction. 

Now take $x_i \in S_2$ and choose a kernel  $x_i = \{ x_{i, s, t}\}_{s,t \in \mathbb{R}}$. Then, by the discussion preceding this lemma, multiplying matrix kernels and using Fubini we find, 
\[
\begin{split}
T_\phi^H(x_1, \ldots, x_n)  = & \int_\Omega a_0(H, \omega) x_1 a_1(H, \omega) \ldots x_n a_n(H, \omega) d\mu(\omega) \\
= &  \int_\Omega \{   \int_{\mathbb{R}^{n-1}} a_0(s_0, \omega) x_{1, s_0, s_1} a_1(s_1, \omega)  \ldots x_{n, s_{n-1}, s_n} a_n(s_n, \omega)  ds_1\ldots ds_{n-1} \}_{s_0, s_n} d\mu(\omega) \\
= &  \{   \int_{\mathbb{R}^{n-1}}   \phi(s_0, \ldots, s_n)   x_{1, s_0, s_1}    \ldots x_{n, s_{n-1}, s_n}    ds_1\ldots ds_{n-1} \}_{s_0, s_n} \\
= & T_\phi. 
\end{split}
\]
\end{proof}

\subsection{The Potapov-Skripka-Sukochev theorem}
The following theorem is the core  of the main result of \cite{PSS-Inventiones}.

\begin{theorem}[Remark 5.4 from \cite{PSS-Inventiones}]\label{Thm=PSS}
Let $n \in \mathbb{N}_{\geq 1}$ and let $\psi \in C^n(\mathbb{R})$ with $\Vert \psi^{(n)} \Vert_\infty < \infty$. Let  $1 < p, p_1, \ldots, p_n < \infty$ be such that $p = (p_1; \ldots; p_n)$.  We have 
\begin{equation}\label{Eqn=PSS-Estimate}
\Vert \psi^{[n]} \Vert_{\mathfrak{m}_{p_1, \ldots, p_n}}   < \infty.
\end{equation}
\end{theorem}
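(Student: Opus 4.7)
The plan is to combine the Hermite--Genocchi integral representation of divided differences with the $L^p$-boundedness of noncommutative singular integrals on Schatten classes. The starting identity is
\[
\psi^{[n]}(t_0, \ldots, t_n) = \int_{\Delta_n} \psi^{(n)}\Bigl( \sum_{j=0}^n s_j t_j\Bigr) \, ds,
\]
valid whenever $\psi \in C^n(\mathbb{R})$, where $\Delta_n$ denotes the standard $n$-simplex with Lebesgue measure. This writes $T_{\psi^{[n]}}$ as an average over $s \in \Delta_n$ of multilinear Schur multipliers $T_{\phi_s}$ with symbols $\phi_s(t_0, \ldots, t_n) = \psi^{(n)}(\sum_j s_j t_j)$, each of which depends on the variables only through the single affine combination $u = \sum_j s_j t_j$.

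Next I would realise each $T_{\phi_s}$ as a multiple operator integral in the sense of Birman--Solomyak--Peller. Fixing $s$ and invoking the spectral theorem for the commuting multiplication operators corresponding to the variables $t_j$, one expresses $T_{\phi_s}$ as an integral against the joint spectral measure of $A_s := \sum_j s_j M_{t_j}$, with $\psi^{(n)}$ applied to the scalar parameter of that spectral decomposition. Because $\psi^{(n)}$ is only assumed bounded, the problem is reduced to estimating iterated products of spectral projections of $A_s$ sandwiched between the operator arguments $x_1, \ldots, x_n$.

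The core step is then to bound the resulting structure as a multilinear map $S_{p_1} \times \ldots \times S_{p_n} \to S_p$ uniformly in $s$. By a standard transference argument this reduces to the $S_p$-boundedness of iterated triangular truncations, or equivalently noncommutative Hilbert-transform-type operators, which is available in the range $1 < p < \infty$ through the UMD property of Schatten classes together with noncommutative Calder\'on--Zygmund theory. Chaining the factorwise bounds via H\"older's inequality with the combination $p = (p_1; \ldots ; p_n)$, and then integrating over $\Delta_n$, yields the desired estimate with a constant proportional to $\Vert \psi^{(n)} \Vert_\infty$.

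The main obstacle I anticipate is obtaining constants that remain finite and uniform across $\Delta_n$, especially on its boundary faces where some $s_j \to 0$ and the convex combination $A_s$ collapses. I would resolve this inductively on $n$: the boundary faces of $\Delta_n$ correspond exactly to collapsing two of the variables $t_j$, reducing $\psi^{[n]}$ to a divided difference of strictly lower order already handled by the inductive hypothesis, so that the full $n$-linear singular integral only needs to be controlled on the non-degenerate interior. The restriction $p_j > 1$ is essential throughout because the UMD property fails for $S_1$; this is precisely the endpoint obstruction that the main theorem of the paper must work around through additional Besov regularity on $f$.
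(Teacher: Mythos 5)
The theorem you are asked to prove is not actually proved in the paper at all: it is cited verbatim as an external result, attributed to Remark~5.4 of Potapov--Skripka--Sukochev \cite{PSS-Inventiones}. So there is no internal proof to compare against; the question is whether your sketch could serve as a proof.

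There is a genuine gap at the heart of your strategy, and it sits precisely where you identify "the core step." After writing $\psi^{[n]}$ via the Hermite--Genocchi formula as an average over $s \in \Delta_n$ of symbols $\phi_s(t_0,\ldots,t_n) = \psi^{(n)}\bigl(\sum_j s_j t_j\bigr)$, you propose to bound the individual multipliers $T_{\phi_s}$ uniformly in $s$ and integrate. But the individual $T_{\phi_s}$ are \emph{not} bounded on $S_{p_1}\times\cdots\times S_{p_n}\to S_p$ for $p\neq 2$ with a constant controlled only by $\|\psi^{(n)}\|_\infty$. Already in the linear case $n=1$ with $s=(1/2,1/2)$, the symbol $g(t_0+t_1)$ with $g$ merely bounded measurable is not in general a bounded Schur multiplier on $S_p$ for $p\neq 2$: after the reflection $t_1\mapsto -t_1$ this is a Toeplitz Schur multiplier with symbol $g$, which by transference is bounded on $S_p$ iff $g$ is an $L^p(\mathbb{R})$ Fourier multiplier --- a strictly stronger condition than $g\in L^\infty$. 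The uniform-in-$s$ estimate you need therefore fails, and the integration over $\Delta_n$ cannot be decoupled from the boundedness question; the cancellation coming from the integral is essential, not a post-hoc averaging of already-bounded pieces. This is exactly why the PSS theorem is nontrivial: it produces boundedness from $\|\psi^{(n)}\|_\infty$ even though no individual fibre of the Hermite--Genocchi representation is bounded.

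A secondary issue is the reformulation via "the joint spectral measure of $A_s := \sum_j s_j M_{t_j}$." In the multiple-operator-integral picture, the $n+1$ variables $t_0,\ldots,t_n$ are attached to $n+1$ distinct spectral resolutions interleaved between the operator arguments $x_1,\ldots,x_n$; even when all underlying self-adjoint operators coincide (the Schur-multiplier case), one cannot collapse the multi-indexed symbol $\psi^{(n)}(\sum_j s_j t_j)$ to the spectral calculus of a single self-adjoint operator $A_s$. Your inductive handling of the boundary faces of $\Delta_n$ is a sound instinct (degeneration of the simplex does correspond to collapsing a variable, hence to lower-order divided differences), but it does not repair the interior estimate. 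The actual proof in \cite{PSS-Inventiones}, which this paper simply invokes, proceeds instead through an algebraic reduction identity for $\psi^{[n]}$ expressing it in terms of $\psi^{[n-1]}$, and then runs an induction off the base case $n=1$ established in \cite{PS-Acta} via UMD/martingale square-function methods --- the same circle of ideas you gesture at, but organised around a different decomposition in which the integral structure is never split fibre-by-fibre.
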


\begin{remark}
Explicit upper and lower bounds for \eqref{Eqn=PSS-Estimate} have been obtained in \cite{CaspersReimann}.   However, we shall not require these explicit bounds to derive our main theorem. 
\end{remark}

\section{Wavelet estimates} \label{Sect=CoreEstimate}
In this section we collect all estimates of multilinear Schur multipliers of higher order divided differences of an individual wavelet with sufficient regularity.  


\subsection{Diagonal multipliers}  We start by collecting a number of elementary estimates. 
Let 
\[
\rho: \mathbb{R} \rightarrow [0,1], 
\]
be a smooth function supported on $[-2, 2]$ and which equals 1 in the interval $[-1, 1]$. Let 
\begin{equation}\label{Eqn=RhoR}
\rho_R(\xi) = \rho(R^{-1} \xi),
\end{equation}
which is then smooth, supported on $[-2R, 2R]$ and equals  1 on $[-R, R]$. The symbol considered in the following lemma is considered to be 0 when $s=t$.

\begin{lemma}[\cite{McDonaldSukochev}]\label{Lem=OneMinRho}
For $1 \leq p < \infty, R>0$ we have that $\Vert \{  
\frac{1-\rho_R(s-t)}{s-t}
\}_{s,t \in \mathbb{R}} \Vert_{\mathfrak{m}_{p}} < \infty$.  
\end{lemma}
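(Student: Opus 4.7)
The symbol depends only on $s-t$, so we may write it as $\psi_R(s,t) = m(s-t)$ with $m(\xi) := \frac{1-\rho_R(\xi)}{\xi}$. Because $\rho_R \equiv 1$ on $[-R,R]$, the apparent singularity of $1/\xi$ at the origin is cancelled, and $m$ becomes a smooth bounded function that equals $1/\xi$ on $\{|\xi|\geq 2R\}$ and satisfies $m^{(k)}(\xi) = O(|\xi|^{-k-1})$ at infinity for every $k\geq 0$. The plan is to exhibit a function $h \in L^1(\mathbb{R})$ whose Fourier transform equals $m$; an averaging representation of $T_{\psi_R}$ over unitaries then immediately turns this into the desired Schur multiplier bound, uniformly for all $1 \leq p < \infty$.

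Granted such an $h$, write $m(\xi) = \int_{\mathbb{R}} h(x) e^{-i\xi x}\,dx$ and substitute $\xi = s-t$. Letting $U_x$ denote the unitary multiplication operator on $L^2(\mathbb{R})$ defined by $(U_x\xi)(s) = e^{isx}\xi(s)$, a direct computation shows that $U_x^{\ast} a\, U_x$ has kernel $e^{-ix(s-t)} a_{s,t}$, whence
\[
T_{\psi_R}(a) \;=\; \int_{\mathbb{R}} h(x)\, U_x^{\ast}\, a\, U_x\,dx, \qquad a \in S_2.
\]
By unitary invariance $\Vert U_x^{\ast} a\, U_x\Vert_{S_p} = \Vert a\Vert_{S_p}$, and the Bochner triangle inequality in $S_p$ gives
\[
\Vert T_{\psi_R}(a)\Vert_{S_p} \;\leq\; \Vert h\Vert_{L^1(\mathbb{R})}\,\Vert a\Vert_{S_p}, \qquad 1 \leq p < \infty,
\]
so the lemma is reduced to producing such an $h$.

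For the integrability of $h$, it suffices to observe that both $m$ and $m'$ lie in $L^2(\mathbb{R})$: this follows from smoothness on the compact set $[-2R,2R]$ together with the pointwise decay $|m(\xi)|\preceq |\xi|^{-1}$ and $|m'(\xi)|\preceq |\xi|^{-2}$ for $|\xi|\geq 2R$. By Plancherel both $h$ and $x \mapsto x\,h(x)$ are then in $L^2(\mathbb{R})$, and splitting $\int_{\mathbb{R}}|h(x)|\,dx$ into $\{|x|\leq 1\}$ and $\{|x|>1\}$ and applying Cauchy-Schwarz on each piece (using $\int_{|x|>1}x^{-2}\,dx <\infty$ in the second) yields $h \in L^1(\mathbb{R})$. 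The only genuine subtlety is the endpoint $p=1$: a H\"ormander-Mikhlin argument would handle $1<p<\infty$ with no further effort, but $p=1$ truly requires the stronger Wiener-algebra condition $m \in \widehat{L^1(\mathbb{R})}$, and that is exactly where the smooth truncation $\rho_R$ is essential, since it kills the non-integrable singularity of $1/\xi$ at $\xi=0$.
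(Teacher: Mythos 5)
Your proof is correct and follows the same strategy as the paper: show that the kernel $G(s-t)$ with $G(\xi) = \frac{1-\rho_R(\xi)}{\xi}$ has $\widehat{G}\in L^1(\mathbb{R})$, and then bound the Schur multiplier norm on $S_p$ by $\Vert\widehat{G}\Vert_1$, uniformly in $1\le p<\infty$. The only difference is that the paper outsources both ingredients to \cite{McDonaldSukochev} (their Lemma 4.2.3 for $\widehat{G}\in L^1$ and Proposition 4.2.2(ii) for the multiplier bound), whereas you reprove them from scratch — the $L^1$ integrability via the $L^2$ decay of $m$ and $m'$ combined with Cauchy--Schwarz, and the multiplier bound via the transference identity $T_{\psi_R}(a)=\int_{\mathbb{R}}h(x)\,U_x^{*}aU_x\,dx$ together with unitary invariance of $\Vert\cdot\Vert_{S_p}$.
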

\begin{proof}  
Let $G(t) = \frac{1 - \rho_R(t)}{t}, t \in \mathbb{R}$ for which we interpret $G(0) = 0$. Note that $G \in L^2(\mathbb{R})$ and thus has a Fourier transform $\widehat{G}$. By \cite[Lemma 4.2.3]{McDonaldSukochev} we have $\widehat{G} \in L^1(\mathbb{R})$.   Then, a well-known estimate that is recorded in  \cite[Proposition 4.2.2.(ii)]{McDonaldSukochev} yields, 
\[
\Vert \{  
\frac{1-\rho_R(s-t)}{s-t}
\}_{s,t \in \mathbb{R}} \Vert_{\mathfrak{m}_{p}}  \leq \Vert \widehat{G} \Vert_1  < \infty, 
\]
and this concludes the proof. 
\end{proof}

\begin{lemma} \label{Lem=FiniteBandwith}
For $1 \leq p < \infty, R >0$ we have that $\Vert \{  
\rho_R(s-t) 
\}_{s,t \in \mathbb{R}} \Vert_{\mathfrak{m}_{p}} \leq \Vert \widehat{\rho} \Vert_1 < \infty$.  
\end{lemma}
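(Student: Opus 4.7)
The plan is to reuse the same general principle invoked in the previous lemma: if $\psi(s,t) = G(s-t)$ is a Toeplitz-type (difference) symbol, then $\Vert \psi \Vert_{\mathfrak{m}_p} \le \Vert \widehat{G} \Vert_1$. Here $G = \rho_R$ is a Schwartz function, so $\widehat{\rho_R} \in L^1(\mathbb{R})$ and the invocation of \cite[Proposition 4.2.2.(ii)]{McDonaldSukochev} is immediate, giving $\Vert \{\rho_R(s-t)\}_{s,t} \Vert_{\mathfrak{m}_p} \le \Vert \widehat{\rho_R} \Vert_1$. A one-line change of variable using $\rho_R(\xi) = \rho(R^{-1}\xi)$ then yields $\Vert \widehat{\rho_R} \Vert_1 = \Vert \widehat{\rho} \Vert_1$, producing the desired $R$-independent constant.

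If one wishes to make the key estimate self-contained rather than citing, I would proceed as follows. By Fourier inversion,
\[
\rho_R(s-t) = \int_{\mathbb{R}} \widehat{\rho_R}(\xi)\, e^{2\pi i \xi s}\, e^{-2\pi i \xi t}\, d\xi.
\]
Let $M_\xi$ denote the unitary multiplication operator on $L^2(\mathbb{R})$ by $e^{2\pi i \xi \,\cdot\,}$. The Schur multiplier with rank-one symbol $e^{2\pi i \xi s} e^{-2\pi i \xi t}$ acts on kernels as $x \mapsto M_\xi\, x\, M_\xi^\ast$, which is an isometry on every $S_p$. Hence for $x \in S_p \cap S_2$,
\[
T_{\rho_R(s-t)}(x) = \int_{\mathbb{R}} \widehat{\rho_R}(\xi)\, M_\xi\, x\, M_\xi^\ast\, d\xi,
\]
and the triangle inequality in $S_p$ (valid since $p \ge 1$) gives
\[
\Vert T_{\rho_R(s-t)}(x) \Vert_p \le \int_{\mathbb{R}} |\widehat{\rho_R}(\xi)|\, \Vert M_\xi\, x\, M_\xi^\ast \Vert_p\, d\xi = \Vert \widehat{\rho_R} \Vert_1\, \Vert x \Vert_p.
\]

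The only subtle point is the use of the triangle inequality inside a Bochner-type integral in $S_p$, which requires $p\ge 1$ (this is exactly where the assumption on $p$ is used, and is why the analogous statement in the quasi-Banach regime would be nontrivial). There is no real obstacle, since $\widehat{\rho_R}$ is integrable, $\xi \mapsto M_\xi x M_\xi^\ast$ is continuous in the $S_p$-norm, and the integral converges absolutely. Finally, $\widehat{\rho_R}(\eta) = R\,\widehat{\rho}(R\eta)$ so $\Vert \widehat{\rho_R}\Vert_1 = \Vert \widehat{\rho} \Vert_1 < \infty$ because $\rho$ is Schwartz, completing the proof.
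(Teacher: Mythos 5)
Your first paragraph reproduces the paper's argument exactly: cite \cite[Proposition~4.2.2.(ii)]{McDonaldSukochev}, note $\rho_R$ is Schwartz so $\widehat{\rho_R}\in L^1$, and use the scaling identity to get the $R$-independent bound $\Vert\widehat\rho\Vert_1$. The added self-contained derivation via Fourier inversion and conjugation by modulation operators is a correct and standard proof of the cited transference fact, but it is supplementary rather than a different route.
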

\begin{proof}
$\rho_R$ is Schwartz and so its Fourier transform is integrable. Therefore,  \cite[Proposition 4.2.2.(ii)]{McDonaldSukochev} yields that  $\Vert \{  
\rho_R(s-t) 
\}_{s,t \in \mathbb{R}} \Vert_{\mathfrak{m}_{p}} \leq \Vert \widehat{\rho_R} \Vert_1$.   Further, 
\[
 \widehat{\rho_R}(t) =  (2\pi)^{-\frac{1}{2}} \int_{\mathbb{R}}  \rho(R^{-1} s) e^{ist} ds =  R  (2\pi)^{-\frac{1}{2}} \int_{\mathbb{R}}  \rho( s) e^{is Rt} ds = R \widehat{\rho}( R t). 
\]
Further,   $\Vert R \widehat{\rho}( R \: \cdot \:) \Vert_1 = \Vert \widehat{\rho} \Vert_1$. Thus combining all estimates yields $\Vert \{  
\rho_R(s-t) 
\}_{s,t \in \mathbb{R}} \Vert_{\mathfrak{m}_{p}} \leq \Vert \widehat{\rho} \Vert_1 < \infty$. 

\end{proof}

\subsection{Wavelet estimate: block diagonal part} The next aim   is to give a bound for higher order divided differences of functions that are typical in a wavelet decomposition. We start by estimating such Schur multipliers around the diagonal.

In the proof of Proposition \ref{Prop=WaveletReal} below we wish to use the transformation formulae given in \cite[Lemma 2.3, Theorem 2.7]{PSS-Advances}. For this  it is most efficient to   appeal to the theory of multiple operator integrals (see the monograph \cite{SkripkaTomskova}). We shall only need such multiple operator integrals in the very special situation that the symbol has a Fourier transform that is integrable and the spectral integral is taken with respect to a unitary. The multiple operator integral can then be defined through \eqref{Eqn=MOI} in Proposition \ref{Prop=MOI} below in an elementary way.

Let $\mathcal{A}(\mathbb{T}^{n+1})$ be the set of functions $\psi$ in  $C(\mathbb{T}^{n+1})$ such that for its Fourier transform we have $\widehat{\psi} \in \ell^1(\mathbb{Z}^{n+1})$. $\mathcal{A}(\mathbb{T}^{n+1})$ is also called the Fourier algebra.

\begin{proposition}\label{Prop=MOI}
Let $\psi \in \mathcal{A}(\mathbb{T}^{n+1})$ and let $U \in B(L^2(\mathbb{R}))$ be unitary. For $x_1, \ldots, x_n \in S_2$ we define the $S_2$ convergent sum
\begin{equation}\label{Eqn=MOI} 
T^U_\psi( x_1, \ldots, x_n ) := \sum_{k_0, \ldots, k_n \in \mathbb{Z}} \widehat{\psi}(k_0, \ldots, k_n) U^{k_0} x_1  U^{k_1} x_2 U^{k_2} \ldots  U^{k_{n-1}} x_n U^{k_{n}}. 
\end{equation}
 Let $0 <  p_1, \ldots, p_n < \infty$ and  assume $0 < p := (p_1; \ldots; p_n) \leq 1$.   If moreover $x_i \in   S_{p_i} \cap S_2$ and $\widehat{\psi} \in \ell^{p}(\mathbb{Z}^{n+1})$  then  $T^U_\psi( x_1, \ldots, x_n )  \in S_p$ and 
\[
\Vert T^U_\psi: S_{p_1} \times \ldots \times S_{p_n} \rightarrow S_p \Vert \leq \Vert \widehat{\psi} \Vert_{\ell^{p}(\mathbb{Z}^{n+1})}.
\]
\end{proposition}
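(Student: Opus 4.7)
My proof plan for Proposition \ref{Prop=MOI}:

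First I would verify that each individual summand in \eqref{Eqn=MOI} lies in $S_p$ with a good norm bound. Since $U$ is unitary, every power $U^{k_i}$ is unitary and hence isometric on each Schatten class; combined with the noncommutative H\"older inequality from the preliminaries this yields
\[
\| U^{k_0} x_1 U^{k_1} x_2 \cdots U^{k_{n-1}} x_n U^{k_n} \|_p \leq \prod_{j=1}^n \| x_j \|_{p_j}
\]
uniformly in $(k_0, \ldots, k_n) \in \mathbb{Z}^{n+1}$. Specialising to $p_1 = \cdots = p_n = 2$ (whose H\"older combination is $2/n$) and using the continuous inclusion $S_{2/n} \subseteq S_2$ shows that each summand is also bounded in $S_2$ by $\prod_j \| x_j \|_2$. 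Since $\psi \in \mathcal{A}(\mathbb{T}^{n+1})$ we have $\widehat{\psi} \in \ell^1(\mathbb{Z}^{n+1})$, so the series in \eqref{Eqn=MOI} is absolutely convergent in $S_2$; this legitimises the definition of $T^U_\psi( x_1, \ldots, x_n )$ for $x_1, \ldots, x_n \in S_2$.

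Second, to get the $S_p$ estimate under the extra hypothesis $x_j \in S_{p_j} \cap S_2$ and $\widehat{\psi} \in \ell^p(\mathbb{Z}^{n+1})$, I would invoke the quasi-Banach $p$-subadditivity $\bigl\| \sum_m a_m \bigr\|_p^p \leq \sum_m \| a_m \|_p^p$ for absolutely summable series in $S_p$, as recorded in the preliminaries. Applying this to the partial sums of \eqref{Eqn=MOI} and combining with the term-by-term bound from the previous paragraph gives
\[
\left\| T^U_\psi( x_1, \ldots, x_n ) \right\|_p^p \leq \sum_{k_0, \ldots, k_n \in \mathbb{Z}} | \widehat{\psi}( k_0, \ldots, k_n ) |^p \prod_{j=1}^n \| x_j \|_{p_j}^p = \| \widehat{\psi} \|_{\ell^p(\mathbb{Z}^{n+1})}^p \prod_{j=1}^n \| x_j \|_{p_j}^p,
\]
which is the desired bound. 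Note $\widehat{\psi} \in \ell^p \subseteq \ell^1$ since $p \leq 1$, so this is consistent with the definition of the sum.

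Third, one needs to reconcile the two modes of convergence: the series \eqref{Eqn=MOI} is originally defined as an $S_2$-limit, while the bound above is obtained from $S_p$-estimates on partial sums. The $p$-subadditivity applied to differences of partial sums shows they form a Cauchy sequence in $S_p$ with an $S_p$-limit, and the continuous inclusion $S_p \hookrightarrow S_2$ (valid for $p \leq 1 \leq 2$) identifies this $S_p$-limit with $T^U_\psi( x_1, \ldots, x_n )$. I do not anticipate a substantive obstacle: the proposition combines three tools already stated in the preliminaries — the isometric action of unitaries on Schatten spaces, the noncommutative H\"older inequality, and the $p$-subadditivity of the $S_p$ quasi-norm for $p \leq 1$ — and the only point needing attention is precisely the coordination of the $S_2$- and $S_p$-limits just described.
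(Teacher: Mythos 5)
Your proposal is correct and follows essentially the same strategy as the paper: per-term bounds via unitary invariance plus noncommutative H\"older, $\ell^1$-summability of $\widehat{\psi}$ for the $S_2$-convergence, and $p$-subadditivity of the $S_p$ quasi-norm for the final estimate. The only cosmetic difference is in the $S_2$ bound for an individual summand: the paper applies H\"older with all exponents $2n$ and then uses $\Vert \cdot \Vert_{2n} \leq \Vert \cdot \Vert_2$, while you apply H\"older with all exponents $2$ and then use $S_{2/n} \hookrightarrow S_2$; these are equivalent, and your explicit remark on reconciling the $S_2$- and $S_p$-limits is a careful touch that the paper handles more tersely.
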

\begin{proof}
We first prove the $S_2$ convergence of \eqref{Eqn=MOI}. For any subset $A \subseteq \mathbb{Z}^{n+1}$ we have, 
\[
\begin{split}
& \Vert  \sum_{k_0, \ldots, k_n \in A} \widehat{\psi}(k_0, \ldots, k_n) U^{k_0} x_1  U^{k_1} x_2 U^{k_2} \ldots  U^{k_{n-1}} x_n U^{k_{n}} \Vert_2  \\
\leq & \sum_{k_0, \ldots, k_n \in A}  \vert \widehat{\psi}(k_0, \ldots, k_n) \vert  \Vert  U^{k_0} x_1  U^{k_1} x_2 U^{k_2} \ldots  U^{k_{n-1}} x_n U^{k_{n}} \Vert_2 \\
\leq &  \Vert \widehat{\psi} \Vert_{\ell^{1}(A)}   \sup_{k_0, \ldots, k_n \in A} \Vert  U^{k_0} x_1  U^{k_1} x_2 U^{k_2} \ldots  U^{k_{n-1}} x_n U^{k_{n}} \Vert_2 \\
\leq &  \Vert \widehat{\psi} \Vert_{\ell^{1}(A)} \Vert   x_1 \Vert_{2n}  \Vert  x_2 \Vert_{2n}   \ldots  \Vert x_n   \Vert_{2n}\\
\leq &  \Vert \widehat{\psi} \Vert_{\ell^{1}(A)} \Vert   x_1 \Vert_{2}  \Vert  x_2 \Vert_{2}   \ldots  \Vert x_n   \Vert_{2}.
\end{split}
\]
This estimate assures that \eqref{Eqn=MOI} converges in $S_2$ in case  $\widehat{\psi} \in \ell^1(\mathbb{Z}^{n+1})$ as the infinite sum \eqref{Eqn=MOI} is a Cauchy sum.

 Now suppose moreover that $\widehat{\psi} \in \ell^p(\mathbb{Z}^{n+1})$; as $0 < p \leq 1$ in particular  $\widehat{\psi} \in \ell^1(\mathbb{Z}^{n+1})$. Then for $A \subseteq \mathbb{Z}^{n+1}$ and $x_i \in S_{p_i} \cap S_2, \Vert x_i \Vert_{p_i} \leq 1$ we have by the    quasi-triangle inequality \eqref{Eqn=Quasi-Banach}  and H\"older, 
\begin{equation}\label{Eqn=QuasiEstimate}
\begin{split}
     & \Vert  \sum_{k_0, \ldots, k_n \in A} \widehat{\psi}(k_0, \ldots, k_n) U^{k_0} x_1  U^{k_1} x_2 U^{k_2} \ldots  U^{k_{n-1}} x_n U^{k_{n}} \Vert_p^{p}  \\
\leq & \sum_{k_0, \ldots, k_n \in A}  \vert \widehat{\psi}(k_0, \ldots, k_n) \vert^p  \Vert  U^{k_0} x_1  U^{k_1} x_2 U^{k_2} \ldots  U^{k_{n-1}} x_n U^{k_{n}} \Vert_p^p  \\
\leq &  \sum_{k_0, \ldots, k_n \in A}  \vert \widehat{\psi}(k_0, \ldots, k_n) \vert^p  \\
\leq &  \Vert  \widehat{\psi} \Vert_{\ell^p(A)}^p.
\end{split}
\end{equation}
Again, it follows that the sum \eqref{Eqn=MOI} is a Cauchy sum in  $S_p$ if $\widehat{\psi} \in \ell^p(\mathbb{Z}^{n+1})$ and that $\Vert T_\psi(x_1, \ldots, x_n) \Vert_p \leq \Vert \widehat{\psi} \Vert_{  \ell^p(\mathbb{Z}^{n+1}) }$.   Indeed for $\varepsilon > 0$ there exists $M >  0$ large such that for every $N_1 >  N_2 > M$ we have   
\[
\begin{split}
& \Vert \sum_{\substack{ k_0, \ldots, k_n \in \mathbb{Z}, \\
\vert k_0 \vert, \ldots, \vert k_n \vert \leq N_1}}  \widehat{\psi}(k_0, \ldots, k_n) U^{k_0} x_1  U^{k_1} x_2 U^{k_2} \ldots  U^{k_{n-1}} x_n U^{k_{n}} \\
& \qquad -
\sum_{\substack{ k_0, \ldots, k_n \in \mathbb{Z}, \\
\vert k_0 \vert, \ldots, \vert k_n \vert \leq N_2}}  \widehat{\psi}(k_0, \ldots, k_n) U^{k_0} x_1  U^{k_1} x_2 U^{k_2} \ldots  U^{k_{n-1}} x_n U^{k_{n}}  
\Vert_p \\
\leq & \Vert \widehat{\psi} \Vert_{\ell^p( [-N_1, N_1]^{n+1} \backslash [-N_2, N_2]^{n+1}  )} \leq  \Vert \widehat{\psi} \Vert_{\ell^p( \mathbb{Z}^{n+1} \backslash [-M, M]^{n+1}  )} < \varepsilon. 
 \end{split}
\]
\end{proof}

 \begin{remark}
Evidently the statement of Proposition \ref{Prop=MOI} is also true in case $1 \leq p < \infty, \widehat{\psi} \in \ell^1(\mathbb{Z}^{n+1})$ and consequently $1 \leq p_1, \ldots, p_n < \infty$. The proof is easier as one uses the conventional triangle inequality instead of the quasi-triangle inequality. In fact, many of the statements below have a well known counterpart for $1 \leq p < \infty$; these statements shall not be used however in this paper, essentially as in the range $1 < p <\infty$  we can appeal to  Theorem \ref{Thm=PSS} to estimate Schur multipliers, and we decided not to present them here. 
 \end{remark}

\begin{proposition}\label{Prop=Torus} 
Let $0< p_1, \ldots, p_n < \infty$ and assume  $0 < p := (p_1; \ldots; p_n) \leq 1$.   Let $\beta \in \mathbb{N}$ with $n+1 < \beta p$. Let $\varphi \in C^{\beta}(\mathbb{T})$ and let $U \in B(L^2(\mathbb{R}))$ be unitary.   Then, $\varphi^{[n]} \in \mathfrak{U}_n^c(\mathbb{T})$ and, 
\[
\| T_{\varphi^{[n]}}^U : S_{p_1} \times \cdots \times S_{p_n}  \to S_p  \| < \infty.
\]
\end{proposition}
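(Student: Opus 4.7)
\textbf{Proof plan for Proposition \ref{Prop=Torus}.} The strategy is to reduce the estimate to Proposition \ref{Prop=MOI} by showing that the Fourier coefficients of $\varphi^{[n]}$ on $\mathbb{T}^{n+1}$ lie in $\ell^p(\mathbb{Z}^{n+1})$; the hypothesis $\beta p > n+1$ is exactly what is needed to bridge the $C^\beta$-Fourier decay of $\varphi$ to the summability of $\widehat{\varphi^{[n]}}$ in the quasi-Banach exponent $p$.

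First, expand $\varphi(z) = \sum_{k \in \mathbb{Z}} c_k z^k$ with $c_k = \widehat{\varphi}(k)$. Because the divided difference $\varphi \mapsto \varphi^{[n]}$ is linear and continuous (with respect to uniform convergence, on the compact diagonal-extended torus) the key computation is to identify $(z^k)^{[n]}(z_0,\ldots,z_n)$. For $k \geq n$ the elementary identity
\[
(z^k)^{[n]}(z_0,\ldots,z_n) \;=\; \sum_{\substack{j_0,\ldots,j_n \geq 0 \\ j_0+\cdots+j_n = k-n}} z_0^{j_0} z_1^{j_1} \cdots z_n^{j_n}
\]
holds, and an analogous formula with negative exponents (and a sign) handles $k \leq -n$ after factoring $\prod z_i^{-|k|}$; the remaining finite range $|k| < n$ contributes a smooth polynomial symbol whose Fourier transform is finitely supported. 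Summing over $k$ gives, at least in a formal Fourier sense,
\[
\widehat{\varphi^{[n]}}(j_0,\ldots,j_n) \;=\; \pm c_{\pm(j_0+\cdots+j_n+n)}
\]
on the two ``cones'' $\{j_i \geq 0\}$ and $\{j_i \leq -1\}$ (plus harmless lower-order corrections), and vanishes off them. I would verify this using the transformation formulae of \cite[Lemma 2.3, Theorem 2.7]{PSS-Advances}, which is precisely why the authors set the scene through multiple operator integrals on unitaries.

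Next, the regularity hypothesis $\varphi \in C^\beta(\mathbb{T})$ gives the standard Fourier decay $|c_k| \preceq (1+|k|)^{-\beta}$. Grouping the $(n+1)$-tuples by the value $m = j_0+\cdots+j_n$, the number of nonnegative tuples with fixed sum $m$ is $\binom{m+n}{n} \preceq (1+m)^n$, so
\[
\|\widehat{\varphi^{[n]}}\|_{\ell^p(\mathbb{Z}^{n+1})}^p \;\preceq\; \sum_{m \geq 0} (1+m)^n (1+m)^{-\beta p} \;=\; \sum_{m \geq 0} (1+m)^{n - \beta p},
\]
which converges precisely because $n - \beta p < -1$, i.e.\ $\beta p > n+1$. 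The negative cone is estimated identically.

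Finally, since $\widehat{\varphi^{[n]}} \in \ell^p(\mathbb{Z}^{n+1})$ (and a fortiori in $\ell^1$, since $p \leq 1$), Proposition \ref{Prop=MOI} applies and yields
\[
\|T^U_{\varphi^{[n]}} : S_{p_1} \times \cdots \times S_{p_n} \to S_p\| \;\leq\; \|\widehat{\varphi^{[n]}}\|_{\ell^p(\mathbb{Z}^{n+1})} \;<\; \infty,
\]
as desired. The only real obstacle is the explicit identification of $\widehat{\varphi^{[n]}}$; once that expansion is in hand, the counting argument for $\ell^p$-summability is routine and the constraint $\beta p > n+1$ falls out naturally.
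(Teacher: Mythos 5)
Your proposal follows essentially the same route as the paper: expand $\varphi$ in Fourier series, compute $(z^k)^{[n]}$ explicitly as a sum of monomials over lattice points in a simplex, count the lattice points by a stars-and-bars estimate $\preceq (1+|k|)^n$, use the $C^\beta$ Fourier decay $|\widehat{\varphi}(k)| \preceq (1+|k|)^{-\beta}$, and conclude via Proposition \ref{Prop=MOI} with the condition $\beta p > n+1$ ensuring $\ell^p$-summability. A couple of minor inaccuracies worth flagging: your side remark that one would ``verify'' the Fourier expansion via \cite[Lemma 2.3, Theorem 2.7]{PSS-Advances} misattributes the role of those transformation formulae --- in the paper they only enter in the \emph{next} proposition (the Cayley-transform transfer from $\mathbb{T}$ to $\mathbb{R}$), whereas the Fourier expansion of $\varphi^{[n]}$ here is a purely elementary iteration of the geometric-sum identity $\frac{z^k-w^k}{z-w}=\sum_{l=0}^{k-1}z^lw^{k-l-1}$ and its negative-exponent analogue. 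Also, there is no separate ``finite range $|k|<n$'' producing a residual polynomial symbol: the positive-cone formula covers $k\geq n$, $(z^k)^{[n]}\equiv 0$ for $0\leq k<n$, and the negative-cone formula covers all $k\leq -1$. Neither imprecision affects the counting estimate or the conclusion.
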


\begin{proof}
Our aim is to show that $\varphi^{[n]}$ satisfies the criteria of Proposition \ref{Prop=MOI}. 
Consider the  Fourier expansion
\[
\varphi(z) = \sum_{k\in\mathbb{Z}} \widehat{\varphi}(k)\, z^k, \qquad z \in \mathbb{T}.
\]
Thus, as taking divided differences  is a linear operation, we get that
\[
\varphi^{[n]} = \sum_{k\in\mathbb{Z}} \widehat{\varphi}(k)\,( z^k )^{[n] }.
\]
We  examine the term $( z^k )^{[n] }$ and make it concrete.  
For $k \geq n > 0$, the first-order identity
\[
\frac{z^k - w^k}{z-w} = \sum_{l=0}^{k-1} z^l w^{k-l-1}, \qquad z,w \in \mathbb{T}, 
\]
may be iterated $n$ times to yield
\[
(z^k)^{[n]}(z_0,\dots,z_n)
= \sum_{(\alpha_0,\dots,\alpha_n)\in\mathcal{A}_{n,k}}
z_0^{\alpha_0} z_1^{\alpha_1} \cdots z_n^{\alpha_n}, \qquad z_0, \ldots, z_n \in \mathbb{T}, 
\]
where $\mathcal{A}_{n,k}$ is the finite index set of $(n+1)$-tuples $(\alpha_0,\dots,\alpha_n)$ of nonnegative integers such that $\sum_{i=0}^n \alpha_i = k-n$.  We have \cite[Section 1.2]{StarsAndBars}, 
\begin{equation}\label{Eqn=BinomialFormula}
|\mathcal{A}_{n,k}| = \binom{k}{n} \qquad \text{for} \qquad k \ge n+1.
\end{equation}
If $k = n$ we have in particular that $(z^n)^{[n]}(z_0,\dots,z_n) = 1$, i.e. a constant function 1. It thus follows that   $(z^k)^{[n]} = 0$ in case $n > k \geq 0$.

For the negative powers we have for $k > 0, n > 0$,  
\[
\frac{z^{-k} - w^{-k}}{z-w}
=  z^{-k} \frac{w^{k} - z^{k}}{z-w}  w^{-k}   =
- z^{-k} (\sum_{l=0}^{k-1}  z^l w^{k-1-l})   w^{-k}
=
- \sum_{l=0}^{k-1} z^{l-k} w^{-l-1}, \qquad z,w \in \mathbb{T}, 
\]
and applying this formula $n$ times yields 
\[
(z^{-k})^{[n]}(z_0,\dots,z_n)
= (-1)^n \sum_{(\alpha_0,\dots,\alpha_n)\in\mathcal{B}_{n,k}}
z_0^{-\alpha_0-1} z_1^{-\alpha_1-1} \cdots z_n^{-\alpha_n-1},\qquad z_0, \ldots, z_n \in \mathbb{T}, 
\]
where  $\mathcal{B}_{n,k}$  is the finite index set of $(n+1)$-tuples $(\alpha_0,\dots,\alpha_n)$ of nonnegative integers such that $\sum_{i=0}^n \alpha_i = k-1$. We now have  \cite[Section 1.2]{StarsAndBars}, 
\begin{equation}\label{Eqn=BinomialFormula2}
|\mathcal{B}_{n,k}| = \binom{n+k-1}{k-1}. 
\end{equation}

Now note that \eqref{Eqn=BinomialFormula} and \eqref{Eqn=BinomialFormula2} imply that  there exists a constant  $C_n > 0$ depending only on $n$ such that
\[
\max(|\mathcal{A}_{n,k}|, |\mathcal{B}_{n,k}|) \le C_n\,(1+|k|)^n, \qquad k\in\mathbb{Z}.
\]
It follows that the Fourier expansion of $\varphi^{[n]}$ is given by the following formula, where $z_0, \ldots, z_n \in \mathbb{T}$, 
\begin{equation}\label{Eqn=FourierExpansion}
\begin{split}
\varphi^{[n]}(z_0, \ldots, z_n) =  & \sum_{k \in \mathbb{Z}_{\geq n} } \widehat{\varphi}(k) \sum_{(\alpha_0,\dots,\alpha_n)\in\mathcal{A}_{n,k}}  z_0^{\alpha_0} z_1^{\alpha_1} \cdots z_n^{\alpha_n} \\
& \quad + \quad  
\sum_{k \in \mathbb{Z}_{< 0} } \widehat{\varphi}(k) \sum_{(\alpha_0,\dots,\alpha_n)\in\mathcal{B}_{n,\vert k \vert}} 
z_0^{-\alpha_0-1} z_1^{-\alpha_1-1} \cdots z_n^{-\alpha_n-1}. 
\end{split}
\end{equation}
Therefore, by the quasi-triangle inequality, 
\begin{equation}\label{Eqn=DivDiffFourierEst}
\Vert \widehat{ \varphi^{[n]} } \Vert_{\ell^p(\mathbb{Z}^{n+1})}^p =   \sum_{k \in \mathbb{Z}_{\geq n} } \vert \widehat{\varphi}(k) \vert^p   \vert \mathcal{A}_{n,k} \vert  +   
\sum_{k \in \mathbb{Z}_{< n} } \vert \widehat{\varphi}(k) \vert^p   \vert \mathcal{B}_{n,\vert k\vert } \vert
\leq C_n  \sum_{k \in \mathbb{Z} } \vert \widehat{\varphi}(k) \vert^p  \,(1+|k|)^n.
\end{equation}
Now if  $\varphi \in C^\beta(\mathbb{T})$, then by \cite[Prop.~3.2.9 (b)]{GrafakosClassical}
\[
|\widehat{\varphi}(k)| \preceq (1+|k|)^{-\beta}.
\]
And thus by \eqref{Eqn=DivDiffFourierEst} we have $\Vert \widehat{ \varphi^{[n]} } \Vert_{\ell^p(\mathbb{Z}^{n+1})} < \infty$ in case  $n - \beta p < -1$, that is $ \beta p > n+1$. Thus we conclude the proof by Proposition \ref{Prop=MOI}.  
The fact that $\varphi^{[n]} \in \mathfrak{U}_n^{c}(\mathbb{T})$ now also follows by the expansion  \eqref{Eqn=FourierExpansion} and the fact that \eqref{Eqn=DivDiffFourierEst} certainly implies that $\Vert \widehat{ \varphi^{[n]} } \Vert_{\ell^1(\mathbb{Z}^{n+1})} < \infty$ as $p \leq 1$. 

\end{proof}

We now transfer our result to Schur multipliers with symbols depending on real variables. 

\begin{proposition} \label{Prop=WaveletReal}
Let $0< p_1, \ldots, p_n < \infty$ and assume that  $0 < p := (p_1; \ldots; p_n) \leq 1$.   Let $\beta \in \mathbb{N}$ with $n+1 < \beta p$. If $\phi \in C^{\beta}_c(\mathbb{R})$, then
\[
\|  \phi^{[n]}   \|_{\mathfrak{m}_{p_1, \ldots, p_n}} < \infty.
\]
\end{proposition}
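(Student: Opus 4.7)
The plan is to decompose $\phi^{[n]}$ as a continuous superposition of ``separable'' product symbols of the form $\prod_{j=0}^{n} e^{i\xi u_j t_j}$, by combining the Hermite--Genocchi integral representation of the divided difference with Fourier inversion on $\mathbb{R}$. This is the real-line analogue of the discrete Fourier-series decomposition $\varphi = \sum_k \widehat{\varphi}(k) z^k$ underlying Proposition \ref{Prop=Torus}; the role of the multiple operator integral of Proposition \ref{Prop=MOI} is played here by a direct operator product dressed with unitary characters of $\mathbb{R}$.

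First I would apply the Hermite--Genocchi formula
\[
\phi^{[n]}(t_0, \ldots, t_n) = \int_{\Delta_n} \phi^{(n)}\Bigl(\sum_{j=0}^{n} u_j t_j\Bigr) du
\]
on the simplex $\Delta_n = \{u \in [0,1]^{n+1} : \sum_j u_j = 1\}$, followed by Fourier inversion of $\phi^{(n)}$ (justified since the hypothesis $\beta p > n+1$ together with $p \leq 1$ forces $\beta > n+1$, which gives $\widehat{\phi^{(n)}} \in L^1$), to obtain
\[
\phi^{[n]}(t_0, \ldots, t_n) = \frac{1}{2\pi} \int_{\mathbb{R}} \widehat{\phi^{(n)}}(\xi) \int_{\Delta_n} \prod_{j=0}^{n} e^{i\xi u_j t_j}\, du\, d\xi.
\]
For each fixed $(\xi, u)$ the separable integrand $\prod_j e^{i\xi u_j t_j}$ has multilinear Schur multiplier
\[
(x_1, \ldots, x_n) \mapsto M_{e^{i\xi u_0 \cdot}}\, x_1\, M_{e^{i\xi u_1 \cdot}}\, x_2\, \cdots\, x_n\, M_{e^{i\xi u_n \cdot}},
\]
where each $M_{e^{i\xi u_j \cdot}}$ is a unitary multiplication operator on $L^2(\mathbb{R})$. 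By H\"older's inequality in Schatten spaces the $S_p$-norm of this product is bounded uniformly in $(\xi,u)$ by $\prod_j \|x_j\|_{p_j}$. Applying the $p$-triangle inequality in its integral form to the double integral over $(\xi,u)$ would then yield
\[
\|T_{\phi^{[n]}}(x_1, \ldots, x_n)\|_p^p \preceq_n \|\widehat{\phi^{(n)}}\|_{L^p(\mathbb{R})}^p \prod_{j=1}^{n} \|x_j\|_{p_j}^p.
\]

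To conclude, I would check finiteness of $\|\widehat{\phi^{(n)}}\|_{L^p(\mathbb{R})}$: integration by parts gives $|\widehat{\phi}(\xi)| \preceq (1+|\xi|)^{-\beta}$, so $|\widehat{\phi^{(n)}}(\xi)|^p = |\xi|^{np}|\widehat{\phi}(\xi)|^p \preceq |\xi|^{np}(1+|\xi|)^{-\beta p}$, which is integrable on $\mathbb{R}$ precisely when $\beta p > np + 1$. The hypothesis $\beta p > n + 1$ combined with $p \leq 1$ (so $np \leq n$) gives $\beta p > n+1 \geq np+1$, as required.

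The main obstacle is making the integral form of the $p$-triangle inequality rigorous in the quasi-Banach $S_p$ for $0 < p \leq 1$, since Schatten $p$-classes fail to be locally convex and Bochner-style integration is more delicate. I plan to handle this by discretising the $(\xi,u)$-integration into Riemann sums, invoking the discrete $p$-triangle inequality recalled in the preliminaries, and passing to the limit using dominated convergence against the integrable envelope $|\widehat{\phi^{(n)}}(\xi)|^p \prod_j \|x_j\|_{p_j}^p$.
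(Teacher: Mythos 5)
Your strategy hits a fundamental obstruction that cannot be repaired by the discretization plan you sketch in the final paragraph, and it is precisely the obstruction that forces the paper to take a different route. The discrete $p$-triangle inequality $\Vert \sum_k x_k \Vert_p^p \leq \sum_k \Vert x_k \Vert_p^p$ (for $0<p<1$) has \emph{no} integral analogue. Concretely, if you Riemann-discretize $\int_{\mathbb{R}} \widehat{\phi^{(n)}}(\xi) T_\xi \, d\xi$ with mesh $\Delta\xi$, the quasi-triangle inequality gives
\[
\Bigl\Vert \sum_k \widehat{\phi^{(n)}}(\xi_k)\, T_{\xi_k}\, \Delta\xi \Bigr\Vert_p^p \;\leq\; (\Delta\xi)^p \sum_k \bigl\vert \widehat{\phi^{(n)}}(\xi_k) \bigr\vert^p \,\Vert T_{\xi_k} \Vert_p^p,
\]
and $(\Delta\xi)^p \sum_k 1 \approx (\Delta\xi)^{p-1}\cdot(\text{length of interval}) \to \infty$ as $\Delta\xi \to 0$ when $p<1$. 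The same happens even for simple $X$-valued functions in any $p$-normed quasi-Banach space $X$: the bound $\Vert \sum_k \mu(A_k) c_k \Vert^p \leq \sum_k \mu(A_k)^p \Vert c_k \Vert^p$ degenerates upon refining the partition, because $\mu(A_k)^p \gg \mu(A_k)$ for small sets. So ``passing to the limit by dominated convergence'' cannot produce the bound $\|\widehat{\phi^{(n)}}\|_{L^p}^p \prod_j \|x_j\|_{p_j}^p$; the Riemann-sum upper bound diverges. Your argument works cleanly for $p\geq 1$ (where the ordinary triangle inequality for Bochner integrals is available), but the proposition must cover $0<p<1$, and there the plan collapses at exactly the step you flag as ``the main obstacle.''

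This obstruction explains the structure of the paper's proof, which deliberately avoids a continuous Fourier decomposition. Instead of Hermite--Genocchi plus Fourier inversion on $\mathbb{R}$, the paper transfers the symbol to the torus via the Cayley transform $G(z)=i\frac{z+1}{z-1}$ and the transformation formula from \cite[Lemma 2.3, Theorem 2.7]{PSS-Advances} (see \eqref{Eqn=CayleyTransformed}); on $\mathbb{T}$ the function $\varphi=\phi\circ G$ has a \emph{discrete} Fourier series, its divided differences expand into a \emph{countable} sum of monomials $z_0^{\alpha_0}\cdots z_n^{\alpha_n}$, and the $\ell^p$-quasi-triangle inequality can be applied to that countable sum (this is Proposition \ref{Prop=MOI}, invoked through Proposition \ref{Prop=Torus}). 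The Cayley transform is not a cosmetic choice; it is what converts the integral over $\xi \in \mathbb{R}$ into a sum over $k\in\mathbb{Z}$, which is the only setting in which the $p$-triangle inequality is usable. If you wish to salvage your approach, you would have to replace the continuous Fourier inversion of $\phi^{(n)}$ by some discrete decomposition (e.g.\ a periodization or a torus transfer), at which point you essentially recover the paper's argument.
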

\begin{proof}
Recall the Cayley transform $G: \mathbb{T} \backslash \{1 \} \rightarrow \mathbb{R}$ which is a smooth bijection given by 
\[
G(z) =  i \frac{z+1}{z-1}, \qquad  z \in \mathbb{T} \backslash \{1 \}.
\]
$G$ has the property that $\lim_{\lambda \rightarrow \pm \infty }G^{-1}(\lambda) = 1$. 
Set $\varphi(z) = \phi \circ G(z)$ in case $z \in \mathbb{T} \backslash \{1 \}$ and as $\phi$ is compactly supported we may continuously extend $\varphi$ to $\mathbb{T}$ by setting $\varphi(1) = 0$.  As $G$ is smooth we then have $\varphi \in C^{\beta}(\mathbb{T})$.   Proposition \ref{Prop=Torus}  shows that $\varphi^{[n]} \in \mathfrak{U}_n^c(\mathbb{T})$ and thus  $\phi^{[n]} \in \mathfrak{U}_n^c(\mathbb{R})$, by the proof of \cite[Theorem 2.4 (ii)]{PSS-Advances}.

Let $\lambda_i \in \mathbb{R}$ and set $z_i = G^{-1}(\lambda_i)$ where $i = 0, \ldots, n$. 
By \cite[Lemma 2.3 (ii)]{PSS-Advances} we have
\begin{equation}\label{Eqn=CayleyTransformed} 
\begin{split}
\phi^{[n]}(\lambda_0, \ldots, \lambda_n) = & \sum_{k=1}^n \sum_{0=i_0 < \ldots < i_k = n} \frac{(-1)^{k+1}  i^{n-k+1}  }{ 2^{n-k+1}} \varphi^{[k]}( z_{i_0}, \ldots, z_{i_k} ) \\
& \qquad \times \qquad \prod_{j=1}^{k-1}  (z_{i_j} - 1)^2  \prod_{l \in \{ 0, \ldots, n \} \backslash \{ i_1, \ldots, i_{k-1} \} } (z_l -1). 
\end{split}
\end{equation}
 

 Let $H$ be the unbounded self-adjoint operator introduced in Section \ref{Sect=Comparison}.
Let   $U = G^{-1}(H)$ be the multiplication operator on $L^2(\mathbb{R})$ with the function $G^{-1}$. As $G^{-1}$ takes values in $\mathbb{T} \backslash \{1 \}$ we see that $U$ is unitary.
 Proposition \ref{Prop=MOI} then defines the multilinear map  $T_{ \varphi^{[k]} }^{U}, 1 \leq k \leq n$.

We now apply  Lemma \ref{Lem=Comparison} and the transformation formulae \cite[Theorem 2.7,  Lemma 2.2]{PSS-Advances}   to the function \eqref{Eqn=CayleyTransformed}. Hence we find for $x_i \in S_{p_i} \cap S_{2}$, and hence certainly $x_i \in S_{n}$ as in the assumptions of \cite[Theorem 2.7]{PSS-Advances}, that 
  \[
\begin{split}
 T_{\phi^{[n]}}(x_1, \ldots, x_n) = & T_{\phi^{[n]}}^H(x_1, \ldots, x_n) \\    
 = & \sum_{k=1}^n \sum_{0=i_0 < \ldots < i_k = n} \frac{(-1)^{k+1}  i^{n-k+1}  }{ 2^{n-k+1}} 
 T_{ \varphi^{[k]} }^{U}( 
 X_{i_0+1} \ldots X_{i_1}   , \ldots, \\
 & \qquad \qquad\qquad  X_{i_{k-2} + 1}  \ldots  X_{i_{k-1}}   ,  X_{i_{k-1} + 1}  \ldots  X_{i_k}    ), \\
 & 
\end{split}
\]
where
\[
X_{l} =
\left\{
\begin{array}{ll}
x_{l}  (H - 1),  &  l \in \{ 0, \ldots, n \} \backslash \{ i_1, \ldots, i_{k-1} \},  \\
x_l (H - 1)^2,                 &   \textrm{otherwise}. 
\end{array}
\right.
\]
  Note that the H\"older combination of the exponents 
\[
(p_{i_0 + 1}; \ldots; p_{i_1}), (p_{i_1 + 1}; \ldots; p_{i_2}), \ldots, (p_{i_{k-1} + 1}; \ldots; p_{i_k}), \qquad  0 = i_0 < \ldots < i_k = n, 
\]
is given by 
\[
\sum_{l=1}^{k-1} \sum_{s= i_l + 1}^{i_{l+1}} \frac{1}{p_s} = \sum_{l=1}^n \frac{1}{p_l} = \frac{1}{p}. 
\]
By the quasi-triangle inequality and Proposition   \ref{Prop=Torus} we see that the condition $\varphi \in C^\beta(\mathbb{T})$ implies that 
\[
\begin{split}
 & \Vert T_{\phi^{[n]}}(x_1, \ldots, x_n) \Vert_p    \\ \preceq_{n,p} & \sup_{k=1, \ldots, n} \sup_{ \{ 0 = i_0 < \ldots < i_k = n\} \subseteq \{ 0 , \ldots, n\} }  \Vert T_{\varphi^{[k]}}^{U}: S_{(p_{i_0 + 1}; \ldots; p_{i_1})} \times \ldots \times S_{(p_{i_{k-1} + 1}; \ldots; p_{i_k})} \rightarrow S_p \Vert \prod_{l=1}^n \Vert X_l \Vert_{p_l} \\
 \preceq &   \sup_{k=1, \ldots, n} \sup_{ \{ 0 = i_0 < \ldots < i_k = n\} \subseteq \{ 0 , \ldots, n\} }  \Vert T_{\varphi^{[k]}}^{U}: S_{(p_{i_0 + 1}; \ldots; p_{i_1})} \times \ldots \times S_{(p_{i_{k-1} + 1}; \ldots; p_{i_k})} \rightarrow S_p \Vert  \prod_{l=1}^n \Vert x_l \Vert_{p_l},
\end{split}
\]
and the norms of $T_{\varphi^{[k]}}^{U}$ is finite. As the $x_i$ we considered are dense in $S_{p_i}$ the proof follows. 
\end{proof}

The following lemma, as well as its proof, is standard but we have not found its precise statement in the literature. We give the proof for completeness. Note that in the proof of the lemma we must restrict ourselves to the range $1 \leq p \leq \infty$. 

\begin{lemma}\label{Lem=Expect}
Consider two families $\{  Q_{k} \}_{k \in \mathbb{Z}}$ and $\{  P_{k} \}_{k \in \mathbb{Z}}$ of mutually orthogonal projections acting on a Hilbert space $H$. Consider the map 
\[
E: B(H) \rightarrow B(H): x \mapsto \sum_{k \in \mathbb{Z}} Q_k x P_k, 
\]
where the sum converges in the strong operator topology. Let $p \in [1, \infty)$. Then if $x \in S_p$ we have $E(x) \in S_p$ and the assignment $x \mapsto E(x)$ extends to a contraction on $S_p$.
\end{lemma}
\begin{proof}
Let $P = \sum_{k \in \mathbb{Z}} P_k$ and $Q = \sum_{k \in \mathbb{Z}} Q_k$. 
Consider $p=1$. Let $x \in S_1$ and write $x = ab$ with $a, b \in S_2$ and $\Vert x \Vert_1 = \Vert a \Vert_2 \Vert b \Vert_2$; note that we may take $a = u \vert x \vert^{\frac{1}{2}}, b = \vert x \vert^{\frac{1}{2}}$ where $x = u \vert x \vert$ is the polar decomposition. Then 
\[
E(x) \otimes e_{1,1} =   ( \sum_{k \in \mathbb{Z}} Q_k a \otimes e_{1, k} ) ( \sum_{k \in \mathbb{Z}} b P_k \otimes e_{k, 1} ),
\]
where $e_{i,j}$ are the matrix units in $B(\ell^2(\mathbb{Z}))$. Therefore
\[
\begin{split}
\Vert E(x) \Vert_1 = & \Vert E(x) \otimes e_{1,1} \Vert_1 \leq  
\Vert \sum_{k \in \mathbb{Z}} Q_k a \otimes e_{1, k} \Vert_2 \Vert \sum_{k \in \mathbb{Z}} b P_k \otimes e_{k, 1} \Vert_2 \\
= & (\sum_{k \in \mathbb{Z}} \Vert Q_k a \Vert_2^2)^{\frac{1}{2}}  (\sum_{k \in \mathbb{Z}} \Vert b P_k \Vert_2^2)^{\frac{1}{2}}
= \Vert Q a \Vert_2 \Vert b P \Vert_2 \leq  \Vert a \Vert_2  \Vert b \Vert_2 =\Vert x \Vert_1. 
\end{split}
\]
This concludes the proof for $p=1$.

For $p=\infty$ we note that $E$ is the dual of the contractive map
\[
E': S_1 \rightarrow S_1: x \mapsto \sum_{k \in \mathbb{Z}} P_k x Q_k, 
\]
through the duality pairing $\langle x, y \rangle_{S_1, S_\infty} = {\rm Tr}(xy)$. Therefore $E$ is contractive and normal on $B(H)$. 

Finally for general $1 \leq p \leq \infty$ the proof follows from complex interpolation.

\end{proof}

\begin{remark}
In the following Proposition \ref{Prop=Diagonal} for $n=1$ the conditions force that $p=p_1=1$. 
\end{remark}

\begin{proposition}\label{Prop=Diagonal}
Let $1 \leq p_1, \ldots, p_n < \infty$ and assume that $0 < p := (p_1; \ldots; p_n) \leq 1$.  For $0 \leq i \leq n$ let $\{  Q_{i,k} \}_{k \in \mathbb{Z}}$ be a family of mutually orthogonal projections. Let, for $k \in \mathbb{Z}$,
\[
T_k:    S_{p_1}   \times \ldots \times   S_{p_n}   \rightarrow     S_{p}, 
\]
be a multilinear map such that   
\begin{equation}\label{Eqn=BlockType}
T_k(   x_1, \ldots, x_n  ) =   T_k(  Q_{0,k} x_1 Q_{1,k}, Q_{1,k} x_2 Q_{2,k}, \ldots, Q_{n-1,k} x_n Q_{n,k}  ).  
\end{equation}
Then, $T := \sum_{k \in \mathbb{Z}} T_k$ satisfies 
\[
\begin{split}
\Vert  T: S_{p_1} \times \ldots \times S_{p_n} \rightarrow S_{p}  \Vert \leq &    \Vert \{ \Vert T_{k}:  S_{p_1} \times \ldots \times S_{p_n} \rightarrow S_{p}   \Vert \}_{k \in \mathbb{Z}} \Vert_{\ell^{\infty}}.  
\end{split}
\]
\end{proposition}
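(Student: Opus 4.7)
The plan is to bound $\|T(x_1,\dots,x_n)\|_p^p$ by summing the contributions of each $T_k$ via the quasi-triangle inequality, then controlling the resulting sum by a Hölder estimate combined with a contractive ``diagonalisation'' on each factor $S_{p_i}$.

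\textbf{Step 1 (quasi-triangle).} Since $0<p\leq 1$ and $S_p$ is a $p$-Banach space, the inequality $\|\sum_k a_k\|_p^p\leq \sum_k\|a_k\|_p^p$ extends to countable convergent sums. Applying it to $T=\sum_k T_k$ gives
\[
\|T(x_1,\dots,x_n)\|_p^p\leq \sum_{k\in\mathbb{Z}}\|T_k(x_1,\dots,x_n)\|_p^p.
\]
Using \eqref{Eqn=BlockType} and the definition of $\|T_k\|$, each term is at most $\|T_k\|^p\prod_{i=1}^n\|Q_{i-1,k}x_iQ_{i,k}\|_{p_i}^p$, so
\[
\|T(x_1,\dots,x_n)\|_p^p\leq \bigl(\sup_k\|T_k\|\bigr)^p\sum_{k\in\mathbb{Z}}\prod_{i=1}^n\|Q_{i-1,k}x_iQ_{i,k}\|_{p_i}^p.
\]

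\textbf{Step 2 (Hölder in $k$).} Since $p/p_i\in(0,1]$ and $\sum_{i=1}^n p/p_i=1$, Hölder's inequality applied to the sequences $k\mapsto\|Q_{i-1,k}x_iQ_{i,k}\|_{p_i}^{p_i}$ (with exponents $p_i/p\geq 1$) yields
\[
\sum_{k}\prod_{i=1}^n \|Q_{i-1,k}x_iQ_{i,k}\|_{p_i}^{p}
=\sum_{k}\prod_{i=1}^n \bigl(\|Q_{i-1,k}x_iQ_{i,k}\|_{p_i}^{p_i}\bigr)^{p/p_i}
\leq \prod_{i=1}^n\Bigl(\sum_{k}\|Q_{i-1,k}x_iQ_{i,k}\|_{p_i}^{p_i}\Bigr)^{p/p_i}.
\]

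\textbf{Step 3 (contractivity of the block projection).} For each $i$, define $\mathbb{E}_i(x)=\sum_{k}Q_{i-1,k}xQ_{i,k}$. Since the summands are supported in pairwise orthogonal ``row $\times$ column'' blocks indexed by $k$, the output $\mathbb{E}_i(x)$ is block-diagonal with respect to this bi-decomposition and
\[
\|\mathbb{E}_i(x)\|_{p_i}^{p_i}=\sum_{k}\|Q_{i-1,k}xQ_{i,k}\|_{p_i}^{p_i}.
\]
To see that $\mathbb{E}_i$ is contractive on $S_{p_i}$ (using $p_i\geq 1$), pass to $H\oplus H$, set $\widetilde{Q}_k=Q_{i-1,k}\oplus Q_{i,k}$ and $\widetilde{x}=\left(\begin{smallmatrix}0&x\\0&0\end{smallmatrix}\right)$; then the genuine conditional expectation $\widetilde{x}\mapsto\sum_k\widetilde{Q}_k\widetilde{x}\widetilde{Q}_k$ is contractive on $S_{p_i}(H\oplus H)$ by Remark \ref{Rmk=Expect}, and its image is exactly $\left(\begin{smallmatrix}0&\mathbb{E}_i(x)\\0&0\end{smallmatrix}\right)$. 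Hence $\|\mathbb{E}_i(x)\|_{p_i}\leq\|x\|_{p_i}$.

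\textbf{Step 4 (conclusion).} Combining Steps 2 and 3 gives
\[
\sum_{k}\prod_{i=1}^n \|Q_{i-1,k}x_iQ_{i,k}\|_{p_i}^{p}\leq \prod_{i=1}^n \|x_i\|_{p_i}^{p},
\]
which together with Step 1 yields $\|T(x_1,\dots,x_n)\|_p\leq \bigl(\sup_k\|T_k\|\bigr)\prod_{i=1}^n\|x_i\|_{p_i}$, as required.

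The only delicate point is Step 3: the map $\mathbb{E}_i$ is not literally a conditional expectation because the left and right projections differ, so the cited Remark \ref{Rmk=Expect} does not apply directly; the $2\times 2$ matrix trick bridges this gap cleanly. The remaining ingredients (quasi-triangle, Hölder with the balanced exponents $p/p_i$) are routine once the $\mathbb{E}_i$-estimate is in place.
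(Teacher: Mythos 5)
Your proof is correct and follows essentially the same route as the paper: quasi-triangle inequality in $S_p$, the block property of $T_k$, Hölder with the balanced exponents summing to one, and finally the $\ell^{p_i}$-contractivity of the block-restriction maps. In fact, your Step 3 is more careful than the paper's: the paper invokes Remark \ref{Rmk=Expect} directly for $x\mapsto\sum_k Q_{i-1,k}xQ_{i,k}$, which strictly speaking concerns the symmetric map $x\mapsto\sum_k Q_k x Q_k$; your $2\times 2$ matrix dilation with $\widetilde Q_k=Q_{i-1,k}\oplus Q_{i,k}$ is the clean standard way to reduce the asymmetric case to the genuine conditional expectation, and it is good that you flagged this explicitly.
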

\begin{proof}
  Let $x_i \in S_{p_i}, 1 \leq i \leq n$. Then, by the quasi-triangle inequality, the property \eqref{Eqn=BlockType}, the definition of the operator norm of $T_k$, and finally  the H\"older inequality for exponents $\infty, p_1, \ldots p_n$,  
  \begin{equation}\label{Eqn=BlockHolder}
\begin{split}
 \Vert   T(x_1, \ldots x_n) \Vert_p   \leq & \left( \sum_{k \in \mathbb{Z}} \Vert T_k(x_1, \ldots, x_n) \Vert_p^p \right)^{\frac{1}{p}} \\
  = & \left( \sum_{k \in \mathbb{Z}} \Vert T_k( Q_{0,k} x_1 Q_{1,k}, \ldots, Q_{n-1,k} x_n Q_{n,k}) \Vert_p^p   \right)^{\frac{1}{p}} \\
  \leq &  \left( \sum_{k \in \mathbb{Z}} \Vert T_k \Vert^p \Vert  Q_{0,k} x_1 Q_{1,k} \Vert_{p_1}^p  \ldots \Vert Q_{n-1,k} x_n Q_{n,k}) \Vert_{p_n}^p \right)^{\frac{1}{p}}  \\
    \leq &  \Vert \{ \Vert T_k \Vert\}_{k \in \mathbb{Z}} \Vert_{\ell^{\infty}} 
   \Vert  \{ \Vert  Q_{0,k} x_1 Q_{1,k} \Vert_{p_1} \}_{k \in \mathbb{Z}} \Vert_{\ell^{p_1}}  \ldots \Vert \{ \Vert Q_{n-1,k} x_n Q_{n,k}) \Vert_{p_n} \}_{k \in \mathbb{Z}}  \Vert_{\ell^{p_n}}.
   \end{split}
  \end{equation}
  Now as $1 \leq p_i < \infty$ we can apply Lemma \ref{Lem=Expect} to get the following inequality,  
  
  \begin{equation}\label{Eqn=DiagonalProjection}
   \Vert \{  \Vert  Q_{i-1,k} x_i Q_{i,k} \Vert_{p_i} \}_{k \in \mathbb{Z}} \Vert_{\ell^{p_i}} = \Vert \sum_{k \in \mathbb{Z}} Q_{i-1,k} x_i Q_{i,k} \Vert_{\ell^{p_i}} \leq \Vert   x_i   \Vert_{p_i}.
  \end{equation}
  The   two estimates \eqref{Eqn=BlockHolder} and \eqref{Eqn=DiagonalProjection} conclude the proposition. 
\end{proof}

Recall that the cut off function $\rho_R$ was defined in \eqref{Eqn=RhoR}. At this point we introduce for $\phi \in C_c(\mathbb{R})$ and $\alpha = \{ \alpha_k \}_{k \in \mathbb{Z}} \in \ell^{\infty}(\mathbb{Z})$ the function 
\begin{equation}\label{Eqn=PhiAlpha}
\phi_{\alpha, \lambda}(t) = \sum_{k \in \mathbb{Z}} \alpha_k \phi(\lambda t - k), \qquad \phi_{\alpha} = \phi_{\alpha, 1}, \qquad t \in \mathbb{R}.
\end{equation} 
We will later take $\phi$ to be a wavelet and $\alpha_k$ to be the coefficients in a wavelet decomposition on which we impose further decay assumptions. For now we have the following. 

\begin{theorem}\label{Thm=DiagonalMain}
Let $1 \leq p_1, \ldots, p_n < \infty$ and assume that  $0 < p := (p_1; \ldots; p_n) \leq 1$.   Let $\beta \in \mathbb{N}$ with $n+1 < \beta p$. Let $\phi \in C^{\beta}_c(\mathbb{R})$.  Then,  there exists a constant $C > 0$ such that for every $\alpha \in \ell^{\infty}$ we have, 
\[
\Vert \{ \phi^{[n]}_{\alpha}(t_0, \ldots, t_n) 
\rho_R(t_0 - t_1) \ldots \rho_R(t_{n-1} - t_n)  \}_{t_0, \ldots, t_n \in \mathbb{R}} \Vert_{\mathfrak{m}_{p_1, \ldots, p_n}}   < C \Vert \alpha \Vert_{\infty}.
\]
\end{theorem}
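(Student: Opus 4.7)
The plan is to exploit the compact support of $\phi$ together with the diagonal localization imposed by the $\rho_R$-factors to reduce the multiplier to a countable sum of genuinely block-diagonal multipliers that can be controlled by Proposition \ref{Prop=Diagonal}. First I would fix $N>0$ with $\supp(\phi) \subseteq [-N, N]$ and use the integral representation $\phi^{[n]}(t_0, \ldots, t_n) = \int_{S_n} \phi^{(n)}(s_0 t_0 + \ldots + s_n t_n) \, d\mu(s)$ over the standard simplex to see that $\phi^{[n]}(t_0 - k, \ldots, t_n - k)$ vanishes unless the convex hull of $\{t_0, \ldots, t_n\}$ meets $[k-N, k+N]$. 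Combined with $|t_{i-1} - t_i| \leq 2R$ coming from the supports of the $\rho_R$-factors, this forces all $t_i$ to lie in $I_k := [k - N - 2nR, k + N + 2nR]$. To make the $I_k$ mutually disjoint I would pick an integer $M > 2(N + 2nR)$ and split $\alpha = \sum_{r=0}^{M-1} \alpha^{(r)}$ where $\alpha^{(r)}_k = \alpha_k \One_{k \equiv r \;({\rm mod }\; M)}$. The quasi-triangle inequality on the target $S_p$ then reduces the estimate, up to the fixed factor $M^{1/p}$, to a bound for each residue $r$ separately.

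Within a single residue class, write $\Psi^{(r)} = \sum_{k \equiv r} \alpha_k \Psi_k$ where $\Psi_k(t_0, \ldots, t_n) = \phi^{[n]}(t_0 - k, \ldots, t_n - k) \prod_{i=1}^n \rho_R(t_{i-1} - t_i)$. Because each $\Psi_k$ is supported in $I_k^{n+1}$, and the projections $Q_{i,k}$ onto $L^2(I_k)$ are mutually orthogonal over $k \equiv r \pmod{M}$, the multipliers $T_{\alpha_k \Psi_k}$ satisfy the block structure \eqref{Eqn=BlockType} of Proposition \ref{Prop=Diagonal}. That proposition therefore reduces the claim to a uniform-in-$k$ bound on $\Vert T_{\Psi_k}\Vert$, and by translation invariance of each $S_{p_i}$ this in turn equals $\Vert T_{\Psi_0}\Vert$.

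For this last bound I would use the elementary composition formula: for bounded linear Schur multipliers $T_{\theta_i}: S_{p_i} \rightarrow S_{p_i}$ and a multilinear symbol $\psi$,
\[
T_{\psi \cdot \prod_{i=1}^n \theta_i(t_{i-1}, t_i)}(x_1, \ldots, x_n) = T_\psi(T_{\theta_1} x_1, \ldots, T_{\theta_n} x_n),
\]
which is immediate from the kernel formula \eqref{Eqn=DfnSchur}. Applied with $\psi = \phi^{[n]}$ and $\theta_i(s, t) = \rho_R(s - t)$ this gives
\[
\Vert T_{\Psi_0}: S_{p_1} \times \ldots \times S_{p_n} \rightarrow S_p \Vert \leq \Vert \phi^{[n]} \Vert_{\mathfrak{m}_{p_1, \ldots, p_n}} \prod_{i=1}^n \Vert \{\rho_R(s-t)\}_{s,t \in \mathbb{R}} \Vert_{\mathfrak{m}_{p_i}}.
\]
Proposition \ref{Prop=WaveletReal} bounds the first factor (this is exactly where the hypothesis $n + 1 < \beta p$ is used), while Lemma \ref{Lem=FiniteBandwith} bounds each of the factors in the product by $\Vert \widehat{\rho} \Vert_1$. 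The resulting constant depends on $\phi, R, n$ and the $p_i$ but not on $k$ or $\alpha$; combining with Proposition \ref{Prop=Diagonal} (which multiplies by $\sup_k |\alpha_k| = \Vert \alpha \Vert_\infty$) and the partitioning step gives the bound $C\Vert\alpha\Vert_\infty$ with $C = M^{1/p} \Vert \phi^{[n]} \Vert_{\mathfrak{m}_{p_1, \ldots, p_n}} \Vert \widehat{\rho} \Vert_1^n$.

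The most subtle point is the support analysis in the first step: one cannot conclude $\supp(\phi^{[n]}) \subseteq \supp(\phi)^{n+1}$ (which already fails for $n=1$, as $(\phi(s) - \phi(t))/(s-t)$ is generically nonzero when only one of $s,t$ lies in $\supp(\phi)$), so the $\rho_R$-factors are genuinely essential for producing the clean block containment $\supp(\Psi_k) \subseteq I_k^{n+1}$. Once that is in place the remainder is a clean assembly of Propositions \ref{Prop=WaveletReal}, \ref{Prop=Diagonal} and Lemma \ref{Lem=FiniteBandwith}.
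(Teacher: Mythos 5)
Your proof is correct, and it reaches the same three ingredients as the paper (Proposition \ref{Prop=WaveletReal} for the single-wavelet symbol, Lemma \ref{Lem=FiniteBandwith} to peel off the $\rho_R$-factors, and Proposition \ref{Prop=Diagonal} to sum the block-diagonal pieces), but the decomposition you use to get into block-diagonal form is organized differently. The paper first partitions the near-diagonal region $\supp(d_R)$ into unit cubes indexed by an off-diagonal vector $(r_1,\ldots,r_n)$ with $|r_i|\le 2R+1$ and a position $l\in\mathbb{Z}$, obtaining symbols $\phi_\alpha^{[n]} b_{r_1,\ldots,r_n,l}$; for fixed $(r_1,\ldots,r_n,l)$ only the finitely many translates $\tau_{k+l}\phi^{[n]}$ with $|k|\le N$ contribute, so after swapping sums Proposition \ref{Prop=Diagonal} is applied (for each of the finitely many $(r_1,\ldots,r_n,k)$) to the family indexed by $l$. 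You instead decompose directly by the translation index $k$ appearing in $\phi_\alpha=\sum_k\alpha_k\,\phi(\cdot-k)$, observe via the Hermite–Genocchi representation and the $\rho_R$-supports that each $\Psi_k$ lives in a cube $I_k^{n+1}$ of fixed side length $2(N+2nR)$ centered at $(k,\ldots,k)$, and then split $\alpha$ into $M>2(N+2nR)$ residue classes mod $M$ so that within one class the cubes are pairwise disjoint — giving a single application of Proposition \ref{Prop=Diagonal} per residue class. Your bookkeeping is somewhat lighter (a single index $k$ plus a finite residue splitting, versus the paper's $(r_1,\ldots,r_n,l,k)$), at the price of using coarser blocks; the resulting constants are comparable. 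One small remark: the support fact you invoke — that $\phi^{[n]}(t_0-k,\ldots,t_n-k)$ vanishes unless the convex hull of the $t_i$ meets $k+\supp\phi$ — is exactly the clean form of the observation the paper states less precisely ("if $t_i-k$ is outside the support of $\phi$ for all $i$ then the evaluation is $0$"), and it is genuinely needed for both arguments; your explicit use of the Hermite–Genocchi integral to justify it is a welcome clarification.
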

\begin{proof}
For $r \in \mathbb{Z}$ we introduce the block diagonal  indicator concentrated along the $r$-th off-diagonal,
\[
b_{r}(s,t) = \sum_{k \in \mathbb{Z}} \chi_{[k, k+1)}(s)  \chi_{[k+r, k+r+1)}(t), \qquad s,t \in \mathbb{R}. 
\]
The function $B_R = \sum_{r = -2R-1 }^{2R+1} b_r, R \in \mathbb{N}$ is then again an indicator function whose support is strictly larger than the support of $(s,t) \mapsto \rho_R(s-t)$. Now set for $r_1, \ldots, r_n \in \mathbb{Z}, R_1, \ldots, R_n \in \mathbb{N}$ and $t_0, \ldots, t_n \in \mathbb{R}$,  
\[
\begin{split}
b_{r_1, \ldots, r_n}(t_0, \ldots, t_n) = & b_{r_1}(t_0, t_1) \ldots b_{r_n}(t_{n-1}, t_n),  \\
B_{R_1, \ldots, R_n}(t_0, \ldots, t_n) = & B_{R_1}(t_0, t_1) \ldots B_{R_n}(t_{n-1}, t_n).  \\
\end{split}
\]
Also set,
\[
\begin{split}
b_{r_1, \ldots, r_n,l}(t_0, \ldots, t_n) =   & \chi_{[l, l+1)}(t_0)b_{r_1, \ldots, r_n}(t_0, \ldots, t_n) \\
= &  \chi_{[l, l+1)}(t_0) \chi_{[l+r_1, l+r_1+1)}(t_1) 
\ldots  \chi_{[l+ r_1+ \ldots + r_n, l+r_1+\ldots +r_{n}+1)}(t_n). 
\end{split}
\]
Note that $b_{r_1, \ldots, r_n} = \sum_{l \in \mathbb{Z}} b_{r_1, \ldots, r_n,l}$. 
Then $B_{R, \ldots, R}(t_0, \ldots, t_n)$ is again an indicator function whose support is strictly larger than the support of  
\[
d_R(t_0, \ldots, t_n) := \prod_{i=1}^n  \rho_R(t_{i-1} - t_{i}).
\]
Note that 
\[
T_{ \phi^{[n]}_{\alpha}  b_{r_1, \ldots, r_n} d_R  } = T_{ \phi^{[n]}_{\alpha}  b_{r_1, \ldots, r_n}   }  \circ (T_{ \widetilde{ \rho}_R}  \times \ldots \times T_{ \widetilde{ \rho}_R} ),
\]
where $\widetilde{\rho}_R(s,t) = \rho_R(s-t), s,t \in \mathbb{R}$. Hence,   by  Lemma \ref{Lem=FiniteBandwith},
\[
\Vert   \phi^{[n]}_{\alpha} d_R b_{r_1, \ldots, r_n}   \Vert_{\mathfrak{m}_{p_1, \ldots, p_n}} \leq \Vert   \phi^{[n]}_{\alpha}  b_{r_1, \ldots, r_n}   \Vert_{\mathfrak{m}_{p_1, \ldots, p_n}} \prod_{i=1}^n \Vert  \widetilde{ \rho}_R \Vert_{\mathfrak{m}_{p_i}} \preceq \Vert   \phi^{[n]}_{\alpha}  b_{r_1, \ldots, r_n}   \Vert_{\mathfrak{m}_{p_1, \ldots, p_n}}.
\]
By the quasi-triangle inequality,   we thus get
\[
\begin{split}
\Vert   \phi^{[n]}_{\alpha} d_R   \Vert_{\mathfrak{m}_{p_1, \ldots, p_n}}^p   = \Vert   \phi^{[n]}_{\alpha} d_R B_{R, \ldots, R}   \Vert_{\mathfrak{m}_{p_1, \ldots, p_n}}^p & \leq     \sum_{r_1, \ldots, r_n = -2R-1}^{2R+1} \Vert   \phi^{[n]}_{\alpha} d_R b_{r_1, \ldots, r_n}   \Vert_{\mathfrak{m}_{p_1, \ldots, p_n}}^p \\
&  \preceq    \sum_{r_1, \ldots, r_n = -2R-1}^{2R+1} \Vert   \phi^{[n]}_{\alpha}  b_{r_1, \ldots, r_n}   \Vert_{\mathfrak{m}_{p_1, \ldots, p_n}}^p. 
\end{split}
\]
This summation is finite and hence we have reduced the problem to showing that each of the individual summands is bounded up to a constant by  $\Vert \alpha \Vert_{\infty}^p$. 

For a function $\psi \in L^\infty(\mathbb{R}^{n+1})$ and $k \in \mathbb{R}$ we set the translated function, 
\[
(\tau_k \psi)(t_0, \ldots, t_n ) = \psi(t_0 - k, \ldots, t_n - k).
\]
As the divided difference of a translated function equals the translation of the divided difference (see \eqref{Eqn=DivDiffDef}), we have that 
\begin{equation}\label{Eqn=PhiShifted}
\phi^{[n]}_{\alpha}(t_0, \ldots, t_n) = \sum_{k \in \mathbb{Z}} \alpha_k  \phi^{[n]}(t_0 - k, \ldots, t_n - k) = \sum_{k \in \mathbb{Z}}  \alpha_k ( \tau_k  \phi^{[n]})(t_0, \ldots, t_n).  
\end{equation}
Note that if for all $0 \leq i \leq n$ we have that $t_i-k$ is outside of the support of $\phi$   then the  summand $\alpha_k ( \tau_k  \phi^{[n]})(t_0, \ldots, t_n)$ in  \eqref{Eqn=PhiShifted} is 0; this can be verified inductively by the definition of $n$-th order divided difference functions in terms of the $n-1$-th order divided difference function \eqref{Eqn=DivDiffDef}.  As $\phi$ has compact support it follows that for any $r_1, \ldots, r_n \in \mathbb{Z}$ there exists $N \in \mathbb{N}$ such that for all $k \in \mathbb{Z}, \vert k \vert > N$ we have
\[
  ( \tau_k  \phi^{[n]})  b_{r_1, \ldots, r_n, 0}   = 0.
\]
More precisely, if $\supp(\phi) \subseteq [-K, K]$ then $N = K + \max(\vert r_1 \vert, \ldots, \vert r_n \vert) + 1$ suffices. Hence also for  $k \in \mathbb{Z}, \vert k \vert > N$ and any $l \in \mathbb{Z}$ we have 
\[
 (\tau_{k+l} \phi^{[n]}) b_{r_1, \ldots, r_n, l}  =  \tau_l((\tau_{k} \phi^{[n]}) b_{r_1, \ldots, r_n, 0})  =    0.
\]
We emphasize that $N$ is independent of $l$ and depends on $r_1, \ldots, r_n$ and $\phi$ only. 
It follows that, 
\[
 \phi^{[n]}_{\alpha}  b_{r_1, \ldots, r_n,l} = \sum_{k \in \mathbb{Z}} \alpha_k ( \tau_k  \phi^{[n]} )  b_{r_1, \ldots, r_n, l} = \sum_{k = - N}^N \alpha_{k+l}   ( \tau_{k+l}  \phi^{[n]} )  b_{r_1, \ldots, r_n, l}.
\] 
And thus, 
\begin{equation}\label{Eqn=StartEqn}
 \phi^{[n]}_{\alpha}  b_{r_1, \ldots, r_n} = \sum_{l \in \mathbb{Z}}  \phi^{[n]}_{\alpha}  b_{r_1, \ldots, r_n, l} = \sum_{k = - N}^N  \sum_{l \in \mathbb{Z}}  \alpha_{k+l} (\tau_{k+l}  \phi^{[n]} )  b_{r_1, \ldots, r_n, l}.
\end{equation}
Let $r_0 = 0$. Set $Q_{i,l}, i =0, \ldots, n, l \in \mathbb{Z}$, to be the projection given by the multiplication operator with indicator function $\chi_{[l+k+r_i,l+k+r_i+1)}$. Then, the symbol $ b_{r_1, \ldots, r_n, l}$ ensures that for $x_i \in S_{p_i}$ we have, 
\[
\begin{split}
T_{\tau_{k+l}  \phi^{[n]}   b_{r_1, \ldots, r_n, l}}(x_1, \ldots, x_n)= &
 T_{\tau_{k+l}  \phi^{[n]}  b_{r_1, \ldots, r_n, l}}( Q_{0,l}  x_1 Q_{1,l}, \ldots, Q_{n-1,l} x_n Q_{n,l}) \\
 = & T_{\tau_{k+l}  \phi^{[n]}   }( Q_{0,l}  x_1 Q_{1,l}, \ldots, Q_{n-1,l} x_n Q_{n,l}). \\
\end{split}
\]
We conclude firstly that  we may apply Proposition \ref{Prop=Diagonal} (our index $l$ plays the role of $k$) and secondly we have, 
\begin{equation}\label{Eqn=EstimateThis} 
\begin{split}
\Vert T_{\tau_{k+l}  \phi^{[n]}   b_{r_1, \ldots, r_n, l}}(x_1, \ldots, x_n)  \Vert_p = &
\Vert  T_{\tau_{k+l}  \phi^{[n]}  }( Q_{0,l}  x_1 Q_{1,l}, \ldots, Q_{n-1,l} x_n Q_{n,l})  \Vert_p \\
\leq & \Vert  \tau_{k+l}  \phi^{[n]}  \Vert_{\mathfrak{m}_{p_1, \ldots, p_n}} \Vert Q_{0,l}  x_1 Q_{1,l} \Vert_{p_1} \ldots \Vert Q_{n-1,l} x_n Q_{n,l} \Vert_{p_n} \\
\leq &\Vert  \tau_{k+l}  \phi^{[n]}  \Vert_{\mathfrak{m}_{p_1, \ldots, p_n}} \Vert   x_1   \Vert_{p_1} \ldots \Vert x_n \Vert_{p_n}. \\
\end{split}
   \end{equation}
   Therefore,  by using \eqref{Eqn=StartEqn}, the quasi-triangle inequality,  and   Proposition \ref{Prop=Diagonal} for the first inequality, and \eqref{Eqn=EstimateThis} for the second equality,  
\[
\begin{split}
\Vert   \phi^{[n]}_{\alpha}  b_{r_1, \ldots, r_n}   \Vert_{\mathfrak{m}_{p_1, \ldots, p_n}} \preceq &  \sup_{l \in \mathbb{Z}, \vert k \vert \leq N} \Vert \alpha_{k+l} ( \tau_{k+l}  \phi^{[n]} )  b_{r_1, \ldots, r_n, l}  \Vert_{\mathfrak{m}_{p_1, \ldots, p_n}}      \\
\leq&
\Vert \alpha \Vert_\infty \sup_{l \in \mathbb{Z}, \vert k \vert \leq N}  \Vert ( \tau_{k+l}  \phi^{[n]} )  b_{r_1, \ldots, r_n, l}  \Vert_{\mathfrak{m}_{p_1, \ldots, p_n}}   \\
\leq&
\Vert \alpha \Vert_\infty    \Vert   \tau_{k+l}  \phi^{[n]}  \Vert_{\mathfrak{m}_{p_1, \ldots, p_n}} \\
\leq & \Vert \alpha \Vert_\infty    \Vert   \phi^{[n]}  \Vert_{\mathfrak{m}_{p_1, \ldots, p_n}}. 
\end{split}
\]
Applying Proposition \ref{Prop=WaveletReal} concludes the proof. 
\end{proof}

\subsection{Induction} We are now in a position to prove the main estimate on Schur multipliers of higher order divided differences of a wavelet.  The proof proceeds by induction to the order.  
As part of our proof we need the linear case that was covered in \cite{McDonaldSukochev} and which we recall here. Recall that for $0 < p \leq 1$ we set
\[
p^{\sharp} = \frac{p}{1-p},
\]
where if $p=1$ we set $p^\sharp = \infty$. 
Then $\frac{1}{p^\sharp} + 1 = \frac{1}{p}$ or in other words $p = (p^\sharp; 1)$. Recall that $\phi_{\alpha, \lambda}$ was defined in \eqref{Eqn=PhiAlpha}.

\begin{theorem}[Theorem 4.3.2 of \cite{McDonaldSukochev}]\label{Thm=McDonaldSukochev}
Let  $0 <  p \leq 1$. Let $\phi \in C_c^{\beta}(\mathbb{R})$ with $\beta > \frac{2}{p}$.   There exists a constant $C > 0$ such that for every $\alpha \in \ell^{p^\sharp}$ and $\lambda>0$, we have, 
\begin{equation}\label{Eqn=McDSEstimate}
\begin{split}
\Vert  \phi_{\alpha, \lambda}^{[1]}   \Vert_{\mathfrak{m}_{p }} \leq C \lambda  \Vert \alpha \Vert_{p^\sharp}. 
\end{split}
\end{equation}
\end{theorem}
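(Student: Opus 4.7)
My plan has three main steps: reduce to $\lambda=1$ by a dilation unitary, split the symbol into a near-diagonal and an off-diagonal part via the cutoff $\rho_R(s-t)$, and handle each using the wavelet machinery already developed.

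For the rescaling, conjugate by the $L^2(\mathbb{R})$-unitary $U_\lambda \xi(t) = \lambda^{1/2}\xi(\lambda t)$. This preserves the $\mathfrak{m}_p$-norm and replaces the symbol $\phi_{\alpha,\lambda}^{[1]}(s,t)$ by $\phi_{\alpha,\lambda}^{[1]}(\lambda^{-1}s,\lambda^{-1}t)$; since $(\phi(\lambda\cdot))^{[1]}(s,t) = \lambda \phi^{[1]}(\lambda s,\lambda t)$, the factor $\lambda$ is produced and we may assume $\lambda=1$. Choose $R$ large enough that $\mathrm{supp}(\phi)\subset[-R/2,R/2]$. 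For the off-diagonal piece, use the factorization
\[
\phi_\alpha^{[1]}(s,t)(1-\rho_R(s-t)) = \frac{1-\rho_R(s-t)}{s-t}\,\bigl(\phi_\alpha(s)-\phi_\alpha(t)\bigr).
\]
The first factor is the Toeplitz multiplier treated in Lemma~\ref{Lem=OneMinRho} (whose argument, based on $\widehat{G_R}\in L^1$, adapts to the quasi-Banach range via the transference of completely bounded Fourier multipliers). The second corresponds to the commutator $x\mapsto M_{\phi_\alpha}x-xM_{\phi_\alpha}$, whose $S_p$-operator norm is at most $2^{1/p}\|\phi_\alpha\|_\infty$ by the quasi-triangle; since $\phi$ has compact support we get $\|\phi_\alpha\|_\infty\lesssim\|\alpha\|_\infty\leq\|\alpha\|_{p^\sharp}$, which closes the off-diagonal estimate.

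For the near-diagonal piece, let $L$ be such that $\mathrm{supp}(\phi)\subset[-L,L]$ and split $\mathbb{Z}$ into $2L+1$ residue classes modulo $2L+1$, so that within each class the translates $\phi(\cdot-k)$ have pairwise disjoint supports. The quasi-triangle for $\|\cdot\|_{\mathfrak{m}_p}^p$ reduces matters to a single class, within which the Schur multiplier is genuinely block-diagonal with mutually orthogonal blocks. Each block is a translate of $\alpha_k\tau_k\phi^{[1]}\rho_R$, with $\mathfrak{m}_p$-norm uniformly bounded in $k$ by Proposition~\ref{Prop=WaveletReal} (whose hypothesis $\beta p > 2$ matches $\beta>2/p$) and Lemma~\ref{Lem=FiniteBandwith}. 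Mutual orthogonality yields
\[
\|T_{\mathrm{near}}(x)\|_p^p \lesssim \sum_k |\alpha_k|^p \,\|P_kxP_k\|_p^p,
\]
where $P_k$ is the multiplication projection onto $\mathrm{supp}(\tau_k\phi)$. H\"older with conjugate exponents $(p^\sharp/p,1/p)$ then yields the factor $\|\alpha\|_{p^\sharp}^p$ provided we can bound $\sum_k\|P_kxP_k\|_p$ by $\|x\|_p$.

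The main obstacle is this last estimate when $p<1$: the block-diagonal conditional expectation in Remark~\ref{Rmk=Expect} is $S_p$-contractive only for $p\geq 1$, and a simple $2\times 2$ rank-one example shows it can fail even to be bounded on $S_p$ in the quasi-Banach range. Thus in the Banach case $p=1$ the argument closes at once, but for $p<1$ one must instead exploit the full smoothness $\beta>2/p$ of $\phi$ together with a wavelet-atomic decomposition of $x$ to absorb $\sum_k\|P_kxP_k\|_p$ into $\|x\|_p$ directly; this is the technical heart of the McDonald-Sukochev argument. Reinstating the factor $\lambda$ from the initial rescaling finally delivers the claimed bound $C\lambda\|\alpha\|_{p^\sharp}$.
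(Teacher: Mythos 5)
The paper does not prove Theorem~\ref{Thm=McDonaldSukochev}: it is quoted verbatim as Theorem 4.3.2 of \cite{McDonaldSukochev} and used as a black box (it supplies the base case $n=1$ of the induction in Theorem~\ref{Thm=MainWaveletEstimate}). There is therefore no internal proof to compare against, and your proposal must be judged on its own.

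Your overall strategy (dilation to $\lambda=1$, a $\rho_R(s-t)$ split into near-diagonal and off-diagonal pieces, residue classes to isolate blocks, Proposition~\ref{Prop=WaveletReal} for a single bump) mirrors the architecture the paper uses for $n\ge 2$, so the shape is reasonable. You are also honest about the failure of the conditional-expectation estimate for $p<1$: Remark~\ref{Rmk=Expect} and hence Proposition~\ref{Prop=Diagonal} genuinely require $p_i\ge 1$, and in the $n=1$ case the only exponent \emph{is} $p$, so the near-diagonal closure you describe does not go through; you correctly flag this as ``the technical heart'' and leave it open. That is a genuine gap, not a quibble.

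There is, however, a second gap you treat as harmless, and it is not. Your off-diagonal factorization $\phi_\alpha^{[1]}(s,t)(1-\rho_R(s-t)) = \frac{1-\rho_R(s-t)}{s-t}\bigl(\phi_\alpha(s)-\phi_\alpha(t)\bigr)$ leans on the Toeplitz multiplier $\bigl\{\frac{1-\rho_R(s-t)}{s-t}\bigr\}$ being bounded on $S_p$. Lemma~\ref{Lem=OneMinRho} proves this only for $1\le p<\infty$, via $\widehat{G}\in L^1$, which corresponds to an average $\int V_t\,x\,V_{-t}\,d\widehat{G}(t)$; that Bochner-integral bound fails for $p<1$ because the quasi-triangle inequality does not control such averages. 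Your parenthetical claim that ``the argument adapts to the quasi-Banach range via transference of completely bounded Fourier multipliers'' is precisely the phenomenon the paper warns does \emph{not} hold: the introduction cites \cite[Section 5]{CKV-Canada} for the failure of such cb transference in the quasi-Banach regime, and the whole point of the Besov hypothesis is that extra regularity must be imposed to compensate. So for $p<1$ the off-diagonal piece needs a genuinely different argument, one that uses the decay coming from the compact support and smoothness of $\phi$ rather than a Toeplitz/Fourier reduction; this is where \cite{McDonaldSukochev} does real work. In short: your proposal is a correct skeleton for $p=1$, but both the near-diagonal and the off-diagonal steps break at the same obstruction when $p<1$, and you have only acknowledged the first of these.
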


The following theorem is a direct consequence of a result first proved in \cite{PS-Acta}. After that,   alternative proofs and sharpenings of this statement appeared in \cite{CMPS-JFA, CPSZ-AJM, CJSZ-JFA, CGPT-Annals, GPPR}. We note that at $p=1$ we have $p^\sharp = \infty$ and so the estimates \eqref{Eqn=McDSEstimate} and \eqref{Eqn=PSActa}  agree, though they are stated under different regularity conditions on $\phi$.

\begin{theorem}[\cite{PS-Acta}]
Let  $1 <  p < \infty$. Let $\phi \in C_c^{1}(\mathbb{R})$.  There exists a constant $C > 0$ such that for every $\alpha \in \ell^{\infty}(\mathbb{Z})$ and $\lambda>0$ we have, 
\begin{equation}\label{Eqn=PSActa}
\begin{split}
\Vert  \phi_{\alpha, \lambda}^{[1]}   \Vert_{\mathfrak{m}_{p }} \leq C \lambda  \Vert \alpha \Vert_{\infty}. 
\end{split}
\end{equation}
\end{theorem}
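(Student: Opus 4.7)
The plan is to invoke the Potapov--Sukochev theorem of \cite{PS-Acta} as a black box: for every $1 < p < \infty$ there is a constant $C_p > 0$ such that any Lipschitz function $f$ with $\Vert f' \Vert_\infty < \infty$ satisfies $\Vert f^{[1]} \Vert_{\mathfrak{m}_p} \leq C_p \Vert f' \Vert_\infty$. The statement to prove is then simply the specialisation of this inequality to $f = \phi_{\alpha, \lambda}$, and what remains is to control $\Vert \phi_{\alpha, \lambda}' \Vert_\infty$ in terms of $\lambda$ and $\Vert \alpha \Vert_\infty$.

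For the latter, differentiate the defining series termwise, which is legitimate because $\phi \in C_c^1(\mathbb{R})$ ensures that for each fixed $t$ only finitely many summands are nonzero. This gives
\[
\phi_{\alpha, \lambda}'(t) = \lambda \sum_{k \in \mathbb{Z}} \alpha_k \phi'(\lambda t - k).
\]
Since $\phi'$ has compact support in some interval $[-M, M]$, the function $x \mapsto \sum_{k \in \mathbb{Z}} |\phi'(x - k)|$ is $1$-periodic and bounded by a constant $C_\phi \leq (2M+1)\Vert \phi' \Vert_\infty$ depending only on $\phi$. Hence
\[
\Vert \phi_{\alpha, \lambda}' \Vert_\infty \leq \lambda \Vert \alpha \Vert_\infty \cdot \sup_{x \in \mathbb{R}} \sum_{k \in \mathbb{Z}} |\phi'(x - k)| \leq C_\phi \lambda \Vert \alpha \Vert_\infty,
\]
and combining the two inequalities yields $\Vert \phi_{\alpha, \lambda}^{[1]} \Vert_{\mathfrak{m}_p} \leq C_p C_\phi \lambda \Vert \alpha \Vert_\infty$, which is the desired conclusion with $C = C_p C_\phi$. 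There is no serious obstacle here: the real content sits in the cited theorem, and the reduction to the supremum norm of the derivative is elementary.
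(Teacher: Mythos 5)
Your proof is correct and takes essentially the same route as the paper: both invoke the Potapov--Sukochev bound $\Vert f^{[1]} \Vert_{\mathfrak{m}_p} \preceq \Vert f' \Vert_\infty$ from \cite{PS-Acta} as a black box and then bound $\Vert \phi_{\alpha,\lambda}' \Vert_\infty$ by $\lambda \Vert \alpha \Vert_\infty$ using the compact support of $\phi'$, so that only boundedly many terms in the series are nonzero at each point.
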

\begin{proof}
Fix  $1 <  p < \infty$. The main result of \cite{PS-Acta} yields that there exists a constant $C>0$ such that for every $\phi, \alpha, \lambda$,
\[
\Vert  \phi_{\alpha, \lambda}^{[1]}   \Vert_{\mathfrak{m}_{p }} \leq  C   \Vert  \phi_{\alpha, \lambda}' \Vert_{\infty}. 
\]
By the chain rule for differentiation $\Vert  \phi_{\alpha, \lambda}' \Vert_{\infty} = \lambda \Vert  \phi_{\alpha, 1}' \Vert_{\infty}$. Further, for $s \in \mathbb{R}$,
\[
\begin{split}
\vert \phi_{\alpha,1}'(s) \vert  \leq & 
  \sum_{k \in \mathbb{Z}, s - k \in \supp(\phi)} \vert \alpha_k \vert \vert \phi'( s - k)\vert \preceq_\phi   \sum_{k \in \mathbb{Z}, s - k \in \supp(\phi)} \Vert \alpha \Vert_\infty \Vert \phi' \Vert_\infty,  
  \end{split}
\]
and as $\phi$ has compact support the sum is finite with a bound on the number of summands that is uniform in $s$.
 All the previous estimates together yield $\Vert  \phi_{\alpha, \lambda}^{[1]}   \Vert_{\mathfrak{m}_{p }} \preceq_\phi   \lambda  \Vert \alpha \Vert_{\infty}$ and we are done. 
\end{proof}

\begin{theorem}\label{Thm=InductiveStep}
Let $n \geq 2$.  Let $1 \leq p_1, \ldots, p_n < \infty(\mathbb{Z})$ and assume that 
\[
0 < p := (p_1; \ldots; p_n) \leq 1, \quad    1 \leq (p_2; \ldots; p_n) < \infty, \quad  1 \leq (p_1; \ldots; p_{n-1}) < \infty.
\]
Let $\beta \in \mathbb{N}$ with $n+1 < \beta p$. Let $\phi \in C^{\beta}_c(\mathbb{R})$.   There exists a  constant $C > 0$ such that for every $\alpha \in \ell^{\infty}$ we have, 
\[
\begin{split}
\Vert  \phi_\alpha^{[n]}   \Vert_{\mathfrak{m}_{p_1, \ldots, p_n}} \leq &  C ( \max_{k=1, \ldots, n-1}(  \Vert  \phi_\alpha^{[n-1]}   \Vert_{\mathfrak{m}_{p_1, \ldots, p_{k-1}, (p_k; p_{k+1}), p_{k+2}, \ldots,  p_n}}       +  \Vert \phi_\alpha^{[n-1]} \Vert_{\mathfrak{m}_{p_1, \ldots,   p_{n-1}}}    \\
& \qquad + \qquad   \Vert  \phi_\alpha^{[n-1]} \Vert_{\mathfrak{m}_{p_2, \ldots,   p_{n}}} +  \Vert \alpha \Vert_{\infty}). 
\end{split}
\]
\end{theorem}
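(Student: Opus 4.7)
The plan is to decompose $\phi_\alpha^{[n]}$ into a ``fully block-diagonal'' piece handled directly by Theorem \ref{Thm=DiagonalMain} and $n$ ``off-diagonal'' pieces that each reduce to a Schur multiplier of the lower-order divided difference $\phi_\alpha^{[n-1]}$. Concretely, I would apply the telescoping identity
\[
1 = \prod_{i=1}^n \rho_R(t_{i-1} - t_i) + \sum_{i=1}^n \Bigl(\prod_{j<i} \rho_R(t_{j-1} - t_j)\Bigr)\bigl(1 - \rho_R(t_{i-1} - t_i)\bigr)
\]
and multiply through by $\phi_\alpha^{[n]}$. The first term is precisely the block-diagonal symbol appearing in Theorem \ref{Thm=DiagonalMain} and therefore contributes at most $C\Vert \alpha \Vert_\infty$; what remains is to treat each of the $n$ off-diagonal pieces separately.

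For each $i$, invoke the permutation-invariant form of the divided difference recursion \eqref{Eqn=DivDiffDef} to write
\[
\phi_\alpha^{[n]}(t_0, \ldots, t_n)\bigl(1 - \rho_R(t_{i-1} - t_i)\bigr) = \frac{1 - \rho_R(t_{i-1} - t_i)}{t_{i-1} - t_i}\bigl[\phi_\alpha^{[n-1]}(\widehat{t_{i-1}}) - \phi_\alpha^{[n-1]}(\widehat{t_i})\bigr].
\]
The prefix $\bigl(\prod_{j<i} \rho_R(t_{j-1} - t_j)\bigr) \cdot \frac{1 - \rho_R(t_{i-1} - t_i)}{t_{i-1} - t_i}$ is a product of factors depending only on adjacent pairs $(t_{j-1}, t_j)$ for $j \leq i$; by the integral definition of the multilinear Schur multiplier it can be absorbed by pre-applying the corresponding linear Schur multiplications to the inputs $x_j$, each of which is bounded on $S_{p_j}$ by Lemmas \ref{Lem=OneMinRho} and \ref{Lem=FiniteBandwith} (using $p_j \geq 1$).

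It remains to interpret the Schur multiplier whose symbol is $\phi_\alpha^{[n-1]}(\widehat{t_j})$, a function of $n$ rather than $n+1$ variables. For $1 \leq j \leq n-1$ the absence of $t_j$ in the symbol means the $t_j$-integral merges $x_j$ and $x_{j+1}$, so this Schur multiplier equals $T_{\phi_\alpha^{[n-1]}}(x_1, \ldots, x_{j-1}, x_j x_{j+1}, x_{j+2}, \ldots, x_n)$; H\"older bounds its $S_p$-norm by $\Vert \phi_\alpha^{[n-1]} \Vert_{\mathfrak{m}_{p_1, \ldots, (p_j; p_{j+1}), \ldots, p_n}} \prod_l \Vert x_l \Vert_{p_l}$. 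The endpoint indices $j=0$ (contributed by $i=1$) and $j=n$ (contributed by $i=n$) instead produce the factorizations $x_1 \cdot T_{\phi_\alpha^{[n-1]}}(x_2, \ldots, x_n)$ and $T_{\phi_\alpha^{[n-1]}}(x_1, \ldots, x_{n-1}) \cdot x_n$, which lie in $S_p$ precisely because the hypotheses force $(p_2; \ldots; p_n), (p_1; \ldots; p_{n-1}) \in [1, \infty)$. Summing the $n$ contributions via the quasi-triangle inequality for $S_p$ and bounding a sum of at most $2n$ nonnegative terms by $2n$ times their maximum then yields the stated estimate. The main bookkeeping obstacle is the intermediate case $2 \leq i \leq n-1$, where one must verify that the separable prefix genuinely acts as independent linear Schur multiplications on the inputs so that the H\"older estimate closes cleanly, producing exactly the merging norms at positions $k = i-1$ and $k = i$ with no hidden coupling between the input norms.
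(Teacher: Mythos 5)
Your proposal matches the paper's proof of Theorem \ref{Thm=InductiveStep} in every essential respect: the same telescoping decomposition into the block-diagonal piece (estimated by Theorem \ref{Thm=DiagonalMain}) plus $n$ off-diagonal pieces, the same use of the permutation-invariant recursion to replace $\phi_\alpha^{[n]}$ by a difference of two $\phi_\alpha^{[n-1]}$ symbols, and the same absorption of the $\rho_R$ and $(1-\rho_R)/(\cdot)$ prefactors into linear Schur multiplications controlled by Lemmas \ref{Lem=OneMinRho} and \ref{Lem=FiniteBandwith}. The step you flag as a possible obstacle in the middle case is immediate from the kernel definition of $T_\psi$: a symbol of the form $\psi_0(t_0,\dots,t_n)\prod_{j}m_j(t_{j-1},t_j)$ yields $T_{\psi_0}\bigl(T_{m_1}x_1,\dots,T_{m_n}x_n\bigr)$, so there is no coupling and the H\"older estimate closes exactly as you describe.
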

\begin{proof} 
In the next decomposition the hat indicates that a variable is omitted. We have, 
\[
\begin{split}
\phi^{[n]}_\alpha(t_0, \ldots, t_n) = & \phi^{[n]}_\alpha(t_0, \ldots, t_n)  (   \sum_{k=0}^{n-1} \rho_R(   t_0   -   t_1 )   \ldots\rho_R(   t_{k-1}   -   t_k ) \cdot  (1-\rho_R)(   t_{k}   -   t_{k+1} )    \\
& 
 + \quad \rho_R(   t_0   -   t_1 )   \ldots\rho_R(   t_{n-1}   -   t_n )  
)\\
 = &    \sum_{k=0}^{n-1}\left( (\phi_\alpha^{[n-1]}(t_0, \ldots, \widehat{t_k}, \ldots,  t_n)  - \phi^{[n-1]}_\alpha(t_0, \ldots, \widehat{t_{k+1}}, \ldots,  t_n)   ) \right. \\
 & \left. \qquad \times \qquad  \rho_R(   t_0   -   t_1 )   \ldots\rho_R(   t_{k-1}   -   t_k )  \frac{   (1-\rho_R)(   t_{k}   -   t_{k+1} )   }{t_{k+1} - t_k } \right)\\
&  
 + \quad  \phi^{[n]}_\alpha(t_0, \ldots, t_n)\rho_R(   t_0   -   t_1 )   \ldots\rho_R(   t_{n-1}   -   t_n ),   
\end{split}
\] 
By the quasi-triangle inequality it suffices to estimate the multipliers with symbols of each of the $n+1$ summands in the latter expression. By Theorem \ref{Thm=DiagonalMain},
\begin{equation}\label{Eqn=Diagonal}
\Vert \{ \phi^{[n]}_\alpha(t_0, \ldots, t_n) \rho_R(   t_0   -   t_1 )   \ldots\rho_R(   t_{n-1}   -   t_n ) \}_{t_0, \ldots, t_n \in \mathbb{R}} \Vert_{\mathfrak{m}_{p_1, \ldots, p_n}}   \preceq \Vert \alpha \Vert_\infty.
\end{equation}
Further, we have for $0 < k \leq  n-1$, by Lemma \ref{Lem=OneMinRho} and Lemma \ref{Lem=FiniteBandwith},
\begin{equation}\label{Eqn=Induction}
\begin{split}
   &  \Vert \{  \phi_\alpha^{[n-1]}(t_0, \ldots, \widehat{t_k}, \ldots,  t_n)   
\rho_R(   t_0   -   t_1 )   \ldots \rho_R(   t_{k-1}   -   t_k )  \frac{   (1-\rho_R)(   t_{k}   -   t_{k+1} )   }{t_{k+1} - t_k }   \}_{t_0, \ldots, t_n \in \mathbb{R}} \Vert_{\mathfrak{m}_{p_1, \ldots, p_n}} \\
 \leq &   \Vert \{ \phi_\alpha^{[n-1]}(t_0, \ldots, \widehat{t_k}, \ldots,  t_n)   \}_{t_0, \ldots, \widehat{t_k}, \ldots, t_n \in \mathbb{R}} \Vert_{\mathfrak{m}_{p_1, \ldots, (p_{k}; p_{k+1}), \ldots , p_n}}  \Vert \{\frac{   (1-\rho_R)(   t_{k}   -   t_{k+1} )   }{t_{k+1} - t_k }   \}_{t_k, t_{k+1} \in \mathbb{R} } \Vert_{\mathfrak{m}_{p_{k+1}} } \\
 & \quad \times \quad \Vert  \{   \rho_R(   t_0   -   t_1 )   \}_{t_0, t_{1} \in \mathbb{R} } \Vert_{\mathfrak{m}_{p_1} } \cdot \ldots \cdot \Vert  \{  \rho_R(   t_{k-1}  -   t_k)  \}_{t_{k-1}, t_{k} \in \mathbb{R} } \Vert_{\mathfrak{m}_{p_{k}} } \\
 \preceq &   \Vert \{ \phi_\alpha^{[n-1]}(t_0, \ldots, \widehat{t_k}, \ldots,  t_n)   \}_{t_0, \ldots, \widehat{t_k}, \ldots, t_n \in \mathbb{R}} \Vert_{\mathfrak{m}_{p_1, \ldots, (p_{k-1}; p_{k}), \ldots , p_n}}.   
\end{split}
\end{equation}
  For $k=0$ a similar estimate yields the following where the  final estimate is   Theorem \ref{Thm=PSS}, 
\begin{equation}\label{Eqn=PSSPart}
\begin{split}
& \Vert \{     \phi_\alpha^{[n-1]}(t_1, \ldots,   t_n)     \frac{   (1-\rho_R)(   t_{k}   -   t_{k+1} )   }{t_{k+1} - t_k }       \}_{t_0, \ldots, t_n \in \mathbb{R}} \Vert_{\mathfrak{m}_{p_1, \ldots, p_n}} \\
\preceq &  \Vert \{ \phi_\alpha^{[n-1]}(  t_1, \ldots,   t_n)   \}_{t_1, \ldots,   t_n \in \mathbb{R}} \Vert_{\mathfrak{m}_{p_2, \ldots,   p_n}} \preceq \Vert \alpha \Vert_\infty.   
\end{split}
\end{equation} 
In the same way we have for $0 \leq k < n-1$,
\begin{equation} \label{Eqn=Nineteen}
\begin{split}
   &  \Vert \{  \phi_\alpha^{[n-1]}(t_0, \ldots, \widehat{t_{k+1}}, \ldots,  t_n)   
\rho_R(   t_0   -   t_1 )   \ldots \rho_R(   t_{k-1}   -   t_k )  \frac{   (1-\rho_R)(   t_{k}   -   t_{k+1} )   }{t_{k+1} - t_k }   \}_{t_0, \ldots, t_n \in \mathbb{R}} \Vert_{\mathfrak{m}_{p_1, \ldots, p_n}} \\
 \preceq &   \Vert \{ \phi_\alpha^{[n-1]}(t_0, \ldots, \widehat{t_{k+1}}, \ldots,  t_n)   \}_{t_0, \ldots, \widehat{t_{k+1}}, \ldots, t_n \in \mathbb{R}} \Vert_{\mathfrak{m}_{p_1, \ldots, (p_{k}; p_{k+1}), \ldots , p_n}},  
\end{split}
\end{equation}
and for $k = n -1$ using a similar estimate and again Theorem \ref{Thm=PSS},
\begin{equation} \label{Eqn=Twenty}
\begin{split}
& \Vert \{     \phi^{[n-1]}_\alpha(t_0, \ldots,   t_{n-1})   
\rho_R(   t_0   -   t_1 )   \ldots \rho_R(   t_{k-1}   -   t_k )  \frac{   (1-\rho_R)(   t_{k}   -   t_{k+1} )   }{t_{k+1} - t_k }       \}_{t_0, \ldots, t_n \in \mathbb{R}} \Vert_{\mathfrak{m}_{p_1, \ldots, p_n}} \\
\preceq &  \Vert \{ \phi^{[n-1]}_\alpha(  t_0, \ldots,   t_{n-1})   \}_{t_1, \ldots,   t_n \in \mathbb{R}} \Vert_{\mathfrak{m}_{p_1, \ldots,   p_{n-1}}} \preceq \Vert \alpha \Vert_\infty.  
\end{split}
\end{equation} 
 The estimates \eqref{Eqn=Diagonal}, \eqref{Eqn=Induction}, \eqref{Eqn=PSSPart}, \eqref{Eqn=Nineteen} and \eqref{Eqn=Twenty} thus conclude the proof. 
\end{proof}

We now come to our main estimate.

\begin{theorem}\label{Thm=MainWaveletEstimate}
  Let $1 \leq p_1, \ldots, p_n < \infty$ be such that $1 \leq (p_2; \ldots ; p_n), (p_1; \ldots ; p_{n-1}) < \infty$   and let $p := (p_1; \ldots ; p_n)$. Let $\beta \in \mathbb{N}$ with $n+1 < \beta p$ and let $\phi \in C^{\beta}_c(\mathbb{R})$.  
  There exists a constant $C > 0$ such that for every $\alpha \in \ell^{p^\sharp}(\mathbb{Z})$ we have, 
\[
\Vert  \phi_\alpha^{[n]}   \Vert_{\mathfrak{m}_{p_1, \ldots, p_n}} \leq C \Vert \alpha \Vert_{\ell^{p^\sharp}}. 
\]
\end{theorem}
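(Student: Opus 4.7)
The plan is induction on $n$. The base case $n=1$ is immediate: the hypotheses $1 \leq p_1$ together with $p_1 = p \leq 1$ (implicit in the use of $p^\sharp$) force $p_1 = 1$, so $p^\sharp = \infty$, and Theorem~\ref{Thm=McDonaldSukochev} (with $\lambda = 1$) delivers the bound.

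For the inductive step $n \geq 2$, I will apply Theorem~\ref{Thm=InductiveStep} to dominate $\Vert \phi_\alpha^{[n]} \Vert_{\mathfrak{m}_{p_1,\ldots,p_n}}$ by a finite sum of contributions of three types: (a) $(n{-}1)$-th order divided-difference multipliers at a combined tuple $(p_1,\ldots,p_{k-1},(p_k;p_{k+1}),p_{k+2},\ldots,p_n)$ for $1 \leq k \leq n-1$; (b) the two endpoint tuples $(p_1,\ldots,p_{n-1})$ and $(p_2,\ldots,p_n)$; and (c) the term $\Vert \alpha \Vert_\infty$. Since $\ell^{p^\sharp}(\mathbb{Z}) \hookrightarrow \ell^\infty(\mathbb{Z})$, (c) is trivial, so it remains to bound (a) and (b) by $\preceq \Vert \alpha \Vert_{\ell^{p^\sharp}}$.

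For (a), I would first verify that $(p_k;p_{k+1}) \geq 1$, which follows from $(p_1;\ldots;p_{n-1}) \geq 1$ when $k \leq n-2$ and from $(p_2;\ldots;p_n) \geq 1$ when $k = n-1$, since deleting positive reciprocals only enlarges a Hölder combination. The overall Hölder combination of the new $(n{-}1)$-tuple is still $p$, and its two $(n{-}2)$-fold endpoint sub-combinations simplify respectively to $(p_1;\ldots;p_{n-1})$ and $(p_2;\ldots;p_n)$, hence are both $\geq 1$. Thus the theorem at level $n-1$ applies to this tuple and the induction hypothesis supplies the bound $\preceq \Vert \alpha \Vert_{\ell^{p^\sharp}}$.

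For (b), let $p' := (p_1;\ldots;p_{n-1}) \geq 1$ (the other endpoint tuple is treated symmetrically). If $p' > 1$, Theorem~\ref{Thm=PSS} in its quantitative form $\Vert \psi^{[n-1]} \Vert_{\mathfrak{m}} \preceq \Vert \psi^{(n-1)} \Vert_\infty$ (obtained from the closed graph theorem, or from the explicit bounds in~\cite{CaspersReimann}) combined with the estimate $\Vert \phi_\alpha^{(n-1)} \Vert_\infty \preceq \Vert \alpha \Vert_\infty$ from the compact support of $\phi$ yields the bound. If instead $p' = 1$, I would invoke the induction hypothesis at level $n-1$; this again produces $\preceq \Vert \alpha \Vert_\infty$ since $(p')^\sharp = \infty$. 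The main obstacle is the Hölder-exponent bookkeeping underlying (a), together with the separate treatment of the boundary case $p' = 1$ in (b), where Theorem~\ref{Thm=PSS} does not directly apply and one must fall back on the induction hypothesis.
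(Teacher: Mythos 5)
Your proposal is correct and follows essentially the same route as the paper: reduce the $n$-th order symbol via Theorem~\ref{Thm=InductiveStep}, then close by applying Theorem~\ref{Thm=PSS} where the relevant H\"older combination exceeds $1$ and Theorem~\ref{Thm=McDonaldSukochev} where it does not (the paper iterates Theorem~\ref{Thm=InductiveStep} all the way down to first-order symbols, while you re-invoke the induction hypothesis at level $n-1$, but after unrolling these coincide). One small slip in step~(a): the two $(n-2)$-fold endpoint sub-combinations of the merged tuple need not literally equal $(p_1;\ldots;p_{n-1})$ and $(p_2;\ldots;p_n)$ (for instance when $k=1$ one obtains $(p_3;\ldots;p_n)$, and when $k=n-1$ one obtains $(p_1;\ldots;p_{n-2})$), but they dominate those quantities and are therefore still $\geq 1$ by your own observation that deleting reciprocals enlarges a H\"older combination, so the inductive hypothesis applies and the conclusion is unaffected.
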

\begin{proof}
The condition  $1 \leq (p_2; \ldots ; p_n), (p_1; \ldots ; p_{n-1}) < \infty$ implies that all of  the H\"older combinations $(p_k; \ldots; p_l), k < l, (k,l) \not = (1,n)$ lie in the interval $[1, \infty)$.  This allows us to inductively apply the estimate of  Theorem \ref{Thm=InductiveStep} to yield 
\begin{equation}\label{Eqn=StartInduction}
\Vert  \phi_\alpha^{[n]}   \Vert_{\mathfrak{m}_{p_1, \ldots, p_n}}   \preceq_n \sup_{1 \leq k < l   \leq n} \Vert  \phi_\alpha^{[1]}   \Vert_{\mathfrak{m}_{(p_k; \ldots; p_l) } } + \Vert \alpha \Vert_\infty.  
 \end{equation}
In case   $(p_k; \ldots; p_l) > 1$  we apply  Theorem \ref{Thm=PSS} and see,
 \begin{equation}\label{Eqn=InductivePSS}
 \Vert  \phi_\alpha^{[1]}   \Vert_{\mathfrak{m}_{(p_k; \ldots; p_l) } }   \preceq \Vert \alpha \Vert_\infty. 
 \end{equation}
In case   $(p_k; \ldots; p_l) \leq 1$  we apply  Theorem \ref{Thm=McDonaldSukochev} and see
 \begin{equation}\label{Eqn=InductiveMcDonaldSukochev}
 \Vert  \phi_\alpha^{[1]}   \Vert_{\mathfrak{m}_{(p_k; \ldots; p_l) } }    \preceq \Vert \alpha \Vert_{p^\sharp}. 
 \end{equation}
 As  $\Vert \alpha \Vert_\infty \leq \Vert \alpha \Vert_{p^\sharp}$ the estimates \eqref{Eqn=StartInduction}, \eqref{Eqn=InductivePSS} and \eqref{Eqn=InductiveMcDonaldSukochev} conclude the proof. 
 \end{proof}

In order to apply Theorem  \ref{Thm=MainWaveletEstimate} to wavelets we shall also need to consider dilations of the symbol. This can be done through a standard argument that we present now.   Recall again that $\phi_{ \alpha, \lambda}$ was defined in \eqref{Eqn=PhiAlpha}.

\begin{lemma}\label{Lem=TranslateWave}
   We have for $\phi \in C_c^n(\mathbb{R})$ and $\alpha \in \ell^\infty(\mathbb{Z}), \lambda > 0$, 
   \[
   \Vert  \phi_{ \alpha, \lambda}^{[n]}   \Vert_{\mathfrak{m}_{p_1, \ldots, p_n}} =
\lambda^n \Vert   \phi_{  \alpha}^{[n]}   \Vert_{\mathfrak{m}_{p_1, \ldots, p_n}}.
   \]

\end{lemma}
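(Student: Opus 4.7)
The plan is to combine two elementary scaling identities. First, since $\phi_{\alpha,\lambda}(t) = \phi_\alpha(\lambda t)$, an induction on $n$ using the recursion \eqref{Eqn=DivDiffDef} shows that for any $g(t) = f(\lambda t)$ one has
\[
g^{[n]}(t_0,\ldots,t_n) \;=\; \lambda^n f^{[n]}(\lambda t_0, \ldots, \lambda t_n);
\]
the base case is immediate from $(g(s)-g(t))/(s-t) = \lambda(f(\lambda s)-f(\lambda t))/(\lambda s - \lambda t)$, and the induction step follows directly from \eqref{Eqn=DivDiffDef}. Hence $\phi_{\alpha,\lambda}^{[n]}(t_0,\ldots,t_n) = \lambda^n\phi_\alpha^{[n]}(\lambda t_0,\ldots,\lambda t_n)$, and it remains to show that dilating all variables of a symbol does not alter its Schur multiplier norm.

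For this second ingredient, introduce the unitary dilation $U_\lambda \in B(L^2(\mathbb{R}))$ given by $(U_\lambda\xi)(s) = \lambda^{1/2}\xi(\lambda s)$, and observe that $U_\lambda^{*}yU_\lambda$ has kernel $\{\lambda^{-1}y_{\lambda^{-1}s,\lambda^{-1}t}\}$. Writing $\psi_{[\lambda]}(t_0,\ldots,t_n) := \psi(\lambda t_0,\ldots,\lambda t_n)$, a direct computation using the definition of $T_{\psi_{[\lambda]}}$ together with the change of variables $u_i = \lambda s_i$ on the inner integration variables $s_1,\ldots,s_{n-1}$ yields
\[
U_\lambda^{*}\,T_{\psi_{[\lambda]}}(x_1,\ldots,x_n)\,U_\lambda \;=\; T_\psi(U_\lambda^{*}x_1 U_\lambda,\ldots,U_\lambda^{*}x_n U_\lambda),
\]
where the one factor $\lambda^{-1}$ from the outer conjugation and the factor $\lambda^{-(n-1)}$ from the Jacobian combine to $\lambda^{-n}$, which is exactly cancelled by the $n$ factors of $\lambda$ absorbed when rewriting each $x_{i,\lambda^{-1}a,\lambda^{-1}b}$ as $\lambda(U_\lambda^{*}x_iU_\lambda)_{a,b}$. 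Since conjugation by a unitary is an isometry on every Schatten class $S_{p_i}$ and $S_p$, taking Schatten norms on both sides gives $\Vert \psi_{[\lambda]}\Vert_{\mathfrak{m}_{p_1,\ldots,p_n}} = \Vert \psi\Vert_{\mathfrak{m}_{p_1,\ldots,p_n}}$, first on the dense subspaces $(S_{p_i}\cap S_2)$ where $T_{\psi_{[\lambda]}}$ is defined by the integral formula, and then on all of $S_{p_1}\times\cdots\times S_{p_n}$ by density.

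Combining the two ingredients applied to $\psi = \phi_\alpha^{[n]}$ yields the desired identity
\[
\Vert \phi_{\alpha,\lambda}^{[n]}\Vert_{\mathfrak{m}_{p_1,\ldots,p_n}} \;=\; \lambda^n\Vert (\phi_\alpha^{[n]})_{[\lambda]}\Vert_{\mathfrak{m}_{p_1,\ldots,p_n}} \;=\; \lambda^n\Vert \phi_\alpha^{[n]}\Vert_{\mathfrak{m}_{p_1,\ldots,p_n}}.
\]
The argument is routine and the only mild obstacle is notational, namely keeping track of the cancellation of the various powers of $\lambda$ in the change-of-variables step above.
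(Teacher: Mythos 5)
Your proof is correct and follows essentially the same route as the paper: you first establish the elementary scaling identity $\phi_{\alpha,\lambda}^{[n]}(t_0,\ldots,t_n)=\lambda^n\phi_\alpha^{[n]}(\lambda t_0,\ldots,\lambda t_n)$ by induction on the order of the divided difference, and then transfer this to the Schur multiplier norm via conjugation by the unitary dilation on $L^2(\mathbb{R})$, using that conjugation is a Schatten isometry and concluding by density. The paper packages the intertwining as a single identity $U\,T_{\phi_{\alpha,\lambda}^{[n]}}(x_1,\ldots,x_n)\,U^\ast=\lambda^n\,T_{\phi_\alpha^{[n]}}(Ux_1U^\ast,\ldots,Ux_nU^\ast)$ while you isolate the dilation invariance of $\Vert\cdot\Vert_{\mathfrak{m}_{p_1,\ldots,p_n}}$ as a standalone step, but the underlying computation (conjugating the kernel, change of variables on the $n-1$ inner integration variables, and cancellation of all powers of $\lambda$) is the same; your bookkeeping of the $\lambda$-powers is in fact more explicit than the paper's, and your normalization $(U_\lambda\xi)(s)=\lambda^{1/2}\xi(\lambda s)$ is the correct one for $U_\lambda$ to be unitary.
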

\begin{proof}
For a function $\psi \in C^1_c(\mathbb{R})$ we set $\psi_\lambda(t) = \psi(\lambda t), t \in \mathbb{R}$. Then, for $s, t \in \mathbb{R}$, 
\[
\psi_\lambda^{[1]}(s,t) = \frac{\psi(\lambda s) - \psi(\lambda t)}{ s - t} = \lambda \frac{\psi(\lambda s) - \psi(\lambda t)}{ \lambda s - \lambda t} = \lambda \psi^{[1]}( \lambda s, \lambda t). 
\]
By applying this formula inductively to the order $n$ we find that  
\[
 \phi_{\alpha, \lambda}^{[n]}(t_0, \ldots, t_n) = \lambda^n  \phi_{  \alpha}^{[n]}(\lambda t_0, \ldots, \lambda  t_n). 
\]
Now the map $U: L^2(\mathbb{R}) \rightarrow  L^2(\mathbb{R})$ given by $(Uf)(t) = \lambda^{-\frac{1}{2}} f(\lambda t)$ is unitary with $(U^\ast f)(t) = \lambda^{\frac{1}{2}} f(\lambda^{-1} t)$. Further, for $x_1, \ldots, x_n \in S_2$ we have,  
\[
U T_{\phi_{ \alpha, \lambda}^{[n]}}(  x_1 , \ldots,  x_n    ) U^\ast =  \lambda^n T_{\phi_{ \alpha}^{[n]}}(U x_1 U^\ast, \ldots, U x_n U^\ast  ),
\]
so that, by density of $S_2$ in $S_{p_i}$, 
\[
\Vert  \phi_{\alpha, \lambda}^{[n]}   \Vert_{\mathfrak{m}_{p_1, \ldots, p_n}} =
\lambda^n \Vert   \phi_{  \alpha}^{[n]}   \Vert_{\mathfrak{m}_{p_1, \ldots, p_n}}.
\]

\end{proof}

\begin{corollary} \label{Cor=MainWaveletEstimate}
   Using the notation of Theorem \ref{Thm=MainWaveletEstimate}.  There exists a constant $C > 0$ such that for every $\alpha \in \ell^{p^\sharp}(\mathbb{Z})$ and $\lambda > 0$ we have, 
\[
\Vert  \phi_{\lambda, \alpha}^{[n]}   \Vert_{\mathfrak{m}_{p_1, \ldots, p_n}} \leq C \lambda^n \Vert \alpha \Vert_{\ell^{p^\sharp}}. 
\]
\end{corollary}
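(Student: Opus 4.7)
The corollary is a direct combination of the two results immediately preceding it, so the proof plan is essentially a two-line chain of inequalities. My plan is as follows.

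First, I would invoke Lemma \ref{Lem=TranslateWave}, which gives the exact scaling identity
\[
\Vert  \phi_{\alpha, \lambda}^{[n]}   \Vert_{\mathfrak{m}_{p_1, \ldots, p_n}} = \lambda^n \Vert   \phi_{ \alpha}^{[n]}   \Vert_{\mathfrak{m}_{p_1, \ldots, p_n}}.
\]
This reduces the problem to the case $\lambda = 1$ and absorbs the factor $\lambda^n$ present in the claimed bound. Note the dependence on $\alpha$ is unchanged since the coefficients $\alpha_k$ enter on both sides identically.

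Next, I would apply Theorem \ref{Thm=MainWaveletEstimate} directly to the right-hand side. The hypotheses on $p_1, \ldots, p_n$, $\beta$, and $\phi$ are exactly those assumed in the corollary via the phrase ``using the notation of Theorem \ref{Thm=MainWaveletEstimate}'', so there is a constant $C > 0$ depending only on these fixed data such that
\[
\Vert   \phi_{ \alpha}^{[n]}   \Vert_{\mathfrak{m}_{p_1, \ldots, p_n}} \leq C \Vert \alpha \Vert_{\ell^{p^\sharp}}
\]
for every $\alpha \in \ell^{p^\sharp}(\mathbb{Z})$. Combining this with the scaling identity yields the desired bound $\Vert \phi_{\alpha,\lambda}^{[n]} \Vert_{\mathfrak{m}_{p_1,\ldots,p_n}} \leq C \lambda^n \Vert \alpha \Vert_{\ell^{p^\sharp}}$.

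There is no real obstacle here: the nontrivial content is already packaged into Theorem \ref{Thm=MainWaveletEstimate} (the wavelet multiplier estimate at $\lambda = 1$) and Lemma \ref{Lem=TranslateWave} (the conjugation-by-dilation argument that transfers the estimate to arbitrary $\lambda > 0$). The only thing to verify is the minor notational point that the $\alpha$ appearing on the two sides of Lemma \ref{Lem=TranslateWave} is the same sequence, so that the $\ell^{p^\sharp}$-norm on the right does not pick up any $\lambda$-dependent factor.
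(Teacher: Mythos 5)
Your proof is correct and is exactly the argument the paper intends: Lemma \ref{Lem=TranslateWave} scales the dilation out as $\lambda^n$, and Theorem \ref{Thm=MainWaveletEstimate} handles the $\lambda=1$ case. The paper itself states the corollary without proof precisely because this two-step combination is immediate.
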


\section{Wavelet decomposition and main result}\label{Sect=MainResult}

In this section we derive the main result of this paper: an estimate for Schur multipliers of divided diffence functions. This conclusion is derived from the core estimates in Section \ref{Sect=CoreEstimate}. The methods in this section are then similar to \cite[Section 4]{McDonaldSukochev} but we present them in the multilinear case. 

 \begin{definition}
 We call a function $\phi \in L^2(\mathbb{R})$ a wavelet if the family
 \[
 \phi_{j,k}(t) = 2^{\frac{j}{2}} \phi(2^j t - k), \qquad j,k \in \mathbb{Z}, t \in \mathbb{R},
 \]
forms an orthonormal basis in $L^2(\mathbb{R})$.
\end{definition}

\begin{remark}
 In \cite{Daubechies} Daubechies proved that there exists a compactly supported wavelet in $C_c^\beta(\mathbb{R})$ for every $\beta \in \mathbb{N}$ (see also \cite[Theorem 3.8.3]{MeyerBook}).
\end{remark}

We shall make use the following characterization of homogeneous Besov spaces in terms of wavelets. 

\begin{theorem}[Theorem 4.1.3 of \cite{McDonaldSukochev}]
\label{Thm=Besov}
Let $p,q \in (0, \infty]$ and let $s \in \mathbb{R}$. Let $f$ be a locally integrable function and let $\phi$ be a compactly supported $C^\beta$ wavelet for $\beta > \vert s \vert$. Then $f$ belongs to the homogeneous Besov class $\Besov^{s}_{p,q}(\mathbb{R})$ if and only if 
\[
\Vert f \Vert_{\Besov^{s}_{p,q}} \approx_{p,q,s,\phi} \left( \sum_{j \in \mathbb{Z}}  2^{jsq} \Vert f_j \Vert_p^q \right)^{\frac{1}{q}} < \infty. 
\]
\end{theorem}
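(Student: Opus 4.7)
The final statement is the classical wavelet characterization of homogeneous Besov spaces, where $f_j$ denotes the wavelet scale-$j$ projection $\sum_{k \in \mathbb{Z}} \langle f, \phi_{j,k} \rangle \phi_{j,k}$. Since the result is cited from \cite{McDonaldSukochev} and originally due to Meyer, Frazier--Jawerth and others, I would follow the classical strategy: reduce both directions of the equivalence to a single cross-scale almost-orthogonality estimate between wavelet scale projections and Littlewood--Paley blocks.

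The key technical lemma I would prove first is: for some $\gamma$ with $|s| < \gamma < \beta$,
\[
\Vert \Delta_k f_j \Vert_p \preceq 2^{-|j-k|\gamma} \Vert f_j \Vert_p, \qquad j,k \in \mathbb{Z}.
\]
The case $k \gg j$ is handled by the smoothness $\phi \in C^\beta$, which yields the pointwise bound $|\widehat{\phi}(\xi)| \preceq (1+|\xi|)^{-\beta}$ and therefore kills the overlap of the Littlewood--Paley band with the high-frequency tail of $f_j$. The case $k \ll j$ uses that compactly supported orthonormal $C^\beta$ wavelets (the Daubechies construction invoked in the preceding remark) carry a number of vanishing moments commensurate with $\beta$, so that $\widehat{\phi}$ vanishes to sufficient order at the origin and the low-frequency Fourier content of $f_j$ escapes the band selected by $\Delta_k$.

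With this lemma in hand, both directions of the equivalence reduce to a discrete convolution estimate. Writing $\Delta_k f = \sum_j \Delta_k f_j$, applying the $\min(1,p)$-triangle inequality in $L^p$, multiplying by $2^{ks}$ and summing in $k$ in $\ell^q$ bounds $\Vert f \Vert_{\Besov^{s}_{p,q}}$ by a discrete convolution of the sequence $\{2^{js}\Vert f_j\Vert_p\}$ against the kernel $\{2^{-|k|(\gamma - |s|)}\}$. This kernel is summable in $\ell^1$, so Young's inequality finishes the job for $q \geq 1$, while for $q < 1$ one combines the $q$-triangle inequality with the exponential decay. The reverse direction is symmetric, starting from $f_j = \sum_k \Delta_k f_j$, which is valid because $\{\phi(2^{-k}\cdot)\}$ is a partition of unity on $\widehat{f_j}$'s support up to endpoints.

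The main obstacle will be the quasi-Banach range $p, q < 1$, where Minkowski and Young are unavailable: every aggregation step has to be recast using only the $r$-triangle inequality with $r = \min(1,p)$ or $r = \min(1,q)$, and the decay exponent $\gamma - |s|$ must strictly exceed $0$, which is precisely what the hypothesis $\beta > |s|$ buys. A second delicate point is ensuring that the chosen compactly supported $C^\beta$ wavelet has at least $\lceil|s|\rceil+1$ vanishing moments; this follows from the Daubechies construction, since regularity and vanishing moments grow together there.
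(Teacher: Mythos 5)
This statement is not proved in the paper; it is quoted verbatim as Theorem 4.1.3 of \cite{McDonaldSukochev} (itself drawing on \cite{MeyerBook}, \cite{FJW}, \cite{Bourdaud}), so there is no in-paper argument against which to compare your proposal. Your sketch follows the standard cross-scale almost-orthogonality route, which is indeed the classical strategy, and your identification of the two regimes ($k \gg j$ via smoothness, $k \ll j$ via vanishing moments) and of the $\beta > |s|$ threshold is correct in spirit.

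There is, however, a genuine gap in how you justify the key lemma $\Vert \Delta_k f_j \Vert_p \preceq 2^{-|j-k|\gamma}\Vert f_j\Vert_p$ in the quasi-Banach range $p<1$. Your argument for it is entirely Fourier-side: decay of $\widehat\phi$ and vanishing moments ``kill the overlap'' of the Littlewood--Paley band with the tails of $\widehat{f_j}$. This reasoning proves the lemma when $p\geq 1$, because one can then invoke $L^p$-boundedness of the multiplier and convolution-type arguments, but it says nothing about $L^p$ with $p<1$: the operator $\Delta_k$ is not bounded on $L^p$, Hausdorff--Young and Young's convolution inequality are unavailable, and frequency localization alone does not transfer into an $L^p$ estimate. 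In this range the lemma has to be proved in physical space, using the compact support and bounded overlap of the $\phi_{j,k}$, pointwise bounds on $\Delta_k\phi_{j,m}$, and a Peetre-maximal-function or molecular argument that exploits \emph{spatial} localisation in $m$ in addition to scale separation in $|j-k|$; a brutal $p$-triangle over $m$ at a coarse scale $k\ll j$, where the functions $\Delta_k\phi_{j,m}$ overlap heavily, is too lossy and destroys the exponent $\gamma$. You locate the quasi-Banach difficulty in the ``aggregation steps'' (summing in $k$ and in $q$), whereas it in fact bites hardest inside the lemma itself; as stated, your sketch gives no mechanism to produce the lemma for $p<1$. A minor additional imprecision: the identity $f_j=\sum_k\Delta_k f_j$ is not because $\widehat{f_j}$ has compactly supported Fourier transform (it does not), but because $\sum_k\phi(2^{-k}\cdot)=1$ off the origin and one works modulo polynomials in the homogeneous setting.
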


Now throughout this section we take over the notation from Theorem \ref{Thm=MainWaveletEstimate}.  We let 
\[
1 \leq p_1, \ldots, p_n < \infty,
\]
be such that 
\[
1 \leq (p_2; \ldots ; p_n), (p_1; \ldots ; p_{n-1}) < \infty,
\]
and let $p := (p_1; \ldots ; p_n)$. Let $\beta \in \mathbb{N}$ with $n+1 < \beta p$ and let $\phi \in C^{\beta}_c(\mathbb{R})$ be a compactly supported $C^\beta$-wavelet.

We further let $f: \mathbb{R} \rightarrow \mathbb{R}$ be a locally integrable function. As $\phi$ has compact support we may set 
\begin{equation}\label{Eqn=WaveFj}
f_j := \sum_{k \in \mathbb{Z}} \phi_{j,k} \langle f, \phi_{j,k} \rangle,
\end{equation}
where the sum is finite on compact sets.  If moreover, $f \in L^2(\mathbb{R})$ then as the wavelet yields an orthonormal basis $\{ \phi_{j,k} \}_{j,k}$, we see that the sum \eqref{Eqn=WaveFj} converges in $L^2(\mathbb{R})$ to the function $f_j$ and moreover $f = \sum_{j \in \mathbb{Z}} f_j$ in $L^2(\mathbb{R})$. However, we shall need to apply a wavelet decomposition to functions that are not necessarily in $L^2(\mathbb{R})$ but also need to cover more general  functions with bounded $n$-th order derivative, including polynomials of degree $\leq n$. Such polynomials have the property that the wavelet coefficients are all 0 (see \cite{MeyerBook}) and thus \eqref{Eqn=WaveFj} does not provide a good approximation.  In  the linear case this subtle point was outlined carefully in \cite[Section 4.1]{McDonaldSukochev}. Here we treat the higher order case and it turns out that the degree of regularity that we need precisely coincides with our estimates in Section \ref{Sect=CoreEstimate}. The proof of the following lemma is a straightforward generalisation of \cite[Lemma 4.1.4]{McDonaldSukochev}. 

\begin{lemma}\label{Lem=BesovApprox}
Let $f \in C^n(\mathbb{R}) \cap \Besov_{p^\sharp, p}^{n-1 + \frac{1}{p}}$ with $0 < p \leq 1$ and assume that $\Vert f^{(n)} \Vert_\infty < \infty$. Then there exists a polynomial $P$ of degree at most $n$ such that 
\[
f(t) = P(t) + \sum_{j \in \mathbb{Z}} (f_j(t) -      \sum_{k=0}^{n-1}  \frac{t^k}{k!} f_j^{(k)}(0)), \qquad t \in \mathbb{R}.  
\]
Further,  $\Vert P^{(n)} \Vert_\infty \preceq   \Vert f^{(n)} \Vert_\infty + \Vert f  \Vert_{\Besov_{p^\sharp, p}^{n-1 + \frac{1}{p}}}$ up to a constant that does not depend on $f$.
\end{lemma}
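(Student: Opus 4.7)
The approach mirrors the linear case \cite[Lemma 4.1.4]{McDonaldSukochev}, replacing the first-order Taylor polynomial used there by the degree $n-1$ one. Put
\[
T_{j}(t) := \sum_{k=0}^{n-1} \frac{t^k}{k!}\, f_j^{(k)}(0), \qquad g_j(t) := f_j(t) - T_{j}(t),
\]
and set $\tilde f(t) := \sum_{j \in \mathbb{Z}} g_j(t)$. The goal is to show that the series defining $\tilde f$ converges pointwise on $\mathbb{R}$, that $P := f - \tilde f$ is a polynomial of degree at most $n$, and that $\Vert P^{(n)} \Vert_\infty$ admits the claimed bound.

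First I would establish pointwise convergence of $\sum_j g_j(t)$, splitting into high ($j \geq 0$) and low ($j < 0$) frequencies. For $j \geq 0$, the compact support of $\phi$ guarantees that at every $t \in \mathbb{R}$ only boundedly many translates $\phi_{j,k}(t)$ are nonzero, so pointwise bounds on $f_j(t)$ and on each Taylor coefficient $f_j^{(k)}(0)$ with $k \leq n-1$ reduce to bounds on $\sup_k |\langle f, \phi_{j,k}\rangle|$; combining with the wavelet characterization of Theorem \ref{Thm=Besov} (and using the margin $\beta > n + 1/p$ available in Corollary \ref{Cor=MainWaveletEstimate}) yields geometric decay in $j$. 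For $j < 0$, the Taylor subtraction becomes essential: the integral form of Taylor's remainder gives
\[
g_j(t) = \int_0^t \frac{(t-s)^{n-1}}{(n-1)!}\, f_j^{(n)}(s)\, ds, \qquad |g_j(t)| \leq \frac{|t|^n}{n!}\, \Vert f_j^{(n)} \Vert_\infty,
\]
and differentiating the wavelet reconstruction of $f_j$ term by term supplies $\Vert f_j^{(n)} \Vert_\infty \lesssim 2^{jn + j/2} \sup_k |\langle f, \phi_{j,k}\rangle|$. The assumption $f \in \dot B^{n-1+1/p}_{p^\sharp, p}$ together with $\Vert f^{(n)} \Vert_\infty < \infty$ then make the $j < 0$ contribution summable locally uniformly.

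Next I would differentiate the defining series for $\tilde f$ termwise $n$ times. Because $T_j$ has degree $n-1$, its $n$-th derivative is zero, so $g_j^{(n)} = f_j^{(n)}$. The pointwise bounds on $\Vert f_j^{(n)} \Vert_\infty$ obtained above are in fact strong enough to yield locally uniform convergence of $\sum_j g_j^{(n)}$, which legitimises the interchange and gives $\tilde f^{(n)}(t) = \sum_{j \in \mathbb{Z}} f_j^{(n)}(t)$. A distributional identification, using that the wavelet coefficients of $f^{(n)}$ equal $(-1)^n$ times those of $f$ against an $n$-fold differentiated mother wavelet (integration by parts), then shows that $f^{(n)} - \tilde f^{(n)}$ is a \emph{constant}. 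Consequently $(f - \tilde f)^{(n+1)} \equiv 0$, so $P = f - \tilde f$ is a polynomial of degree at most $n$, and $\Vert P^{(n)} \Vert_\infty$ equals the absolute value of this constant.

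The bound $\Vert P^{(n)} \Vert_\infty \lesssim \Vert f^{(n)} \Vert_\infty + \Vert f \Vert_{\dot B^{n-1+1/p}_{p^\sharp, p}}$ is then obtained by collecting the constants produced in the two convergence arguments: the high-frequency block contributes via the Besov norm, while the low-frequency block contributes via $\Vert f^{(n)} \Vert_\infty$. The main obstacle I expect is the last identification, namely $f^{(n)} = \tilde f^{(n)} + \mathrm{const}$, since $f$ is not assumed to lie in $L^2$; wavelet reconstruction therefore cannot be applied directly to $f$ itself and must be carried out either at the level of $f^{(n)} \in L^\infty$ or in the tempered-distribution sense, exactly as is done for $n=1$ in \cite[Lemma 4.1.4]{McDonaldSukochev}.
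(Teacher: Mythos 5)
Your overall strategy matches the paper's: subtract a Taylor polynomial of degree $n-1$ from each $f_j$, show the resulting series converges, differentiate termwise $n$ times, and identify the difference between $f$ and the series as a polynomial of degree at most $n$. However, there are two places where the proposal diverges from the paper, one of which is a genuine gap you flag but do not close.

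First, the split into $j\geq 0$ and $j<0$ is unnecessary and the role you assign to $\Vert f^{(n)}\Vert_\infty$ in the low-frequency estimate is off the mark. The paper bounds $\Vert f_j^{(n)}\Vert_\infty$ uniformly in $j$ by $2^{jn}\Vert f_j\Vert_\infty\preceq 2^{j(n-1+\frac{1}{p})}\Vert f_j\Vert_{p^\sharp}$ (via Bernstein's inequality on compactly supported wavelet blocks and a standard $L^\infty$--$L^{p^\sharp}$ comparison), so that $\sum_{j\in\mathbb{Z}}\Vert f_j^{(n)}\Vert_\infty\preceq\Vert f\Vert_{\Besov^{n-1+1/p}_{p^\sharp,1}}\leq\Vert f\Vert_{\Besov^{n-1+1/p}_{p^\sharp,p}}$ since $p\leq 1$. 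The Besov norm alone controls \emph{all} frequencies; $\Vert f^{(n)}\Vert_\infty$ only enters when bounding $\Vert g^{(n)}\Vert_\infty\leq\Vert f^{(n)}\Vert_\infty+\sum_j\Vert f_j^{(n)}\Vert_\infty$.

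Second, and more importantly, the identification step is left open. Your proposed route — pair $f^{(n)}-\tilde f^{(n)}$ against translated and dilated copies of $\phi^{(n)}$ via integration by parts and deduce that the difference is constant — is not carried out, and as you note yourself it is exactly the sticking point. The differentiated wavelets $\phi_{j,k}^{(n)}$ are not an orthonormal family, so matching coefficients against them does not directly pin down the function. The paper sidesteps this entirely: it defines $g(t)=f(t)-\sum_{k<n}\frac{t^k}{k!}f^{(k)}(0)-\sum_j\bigl(f_j(t)-\sum_{k<n}\frac{t^k}{k!}f_j^{(k)}(0)\bigr)$, obtained by integrating the uniformly convergent series for $f^{(n)}-\sum_j f_j^{(n)}$ $n$ times from $0$; the locally uniform convergence and the compact support of $\phi$ give $\langle g,\phi_{j,k}\rangle=0$ for all $j,k$, and Bourdaud's theorem (all wavelet coefficients vanish $\Rightarrow$ $g$ is a polynomial) does the rest, with $\Vert g^{(n)}\Vert_\infty<\infty$ forcing the degree bound. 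Without an analogue of that vanishing-coefficient/Bourdaud step, the proposal does not reach the conclusion.
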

\begin{proof}
   Since the wavelet $\phi$ is assumed to be $C^\beta$ with $n+1 < \beta p$ it follows that $f_j$ is  $\beta$ times continuously differentiable and so certainly it is $n$ times continuously differentiable.   
By \cite[Chapter 2, Theorem 3]{MeyerBook} and then using \cite[Eqn.  (4.3)]{McDonaldSukochev} for every $j \in \mathbb{Z}$ we have 
\[
\Vert f_j^{(n)} \Vert_\infty \preceq 2^{jn} \Vert f_j \Vert_\infty \preceq 2^{jn} 2^{j (\frac{1}{p} - 1)} \Vert f_j \Vert_{p^\sharp}. 
\]
Therefore, we have by Theorem \ref{Thm=Besov} ($s = n -1 + \frac{1}{p}$ and $p \beta > n+1$ imply  $\beta > \vert s \vert$),   and the fact that $0 < p \leq 1$ implies that the norm of  $\Besov^{n-1 + \frac{1}{p}}_{p^\sharp, p}$ is smaller than the norm of $\Besov^{n-1 + \frac{1}{p}}_{p^\sharp, 1}$ (see \eqref{Eqn=BesovDfn}),
\begin{equation}\label{Eqn=DerivativeFinite}
\sum_{j \in \mathbb{Z}}  \Vert f_j^{(n)} \Vert_\infty \preceq 
   \sum_{j \in \mathbb{Z}}   2^{j (n - 1 + \frac{1}{p}  )} \Vert f_j \Vert_{p^\sharp} \approx \Vert f \Vert_{\Besov^{n-1 + \frac{1}{p}}_{p^\sharp, 1} } \leq \Vert f \Vert_{\Besov^{n-1 + \frac{1}{p}}_{p^\sharp, p} }.
\end{equation}
   The convergence of the sum \eqref{Eqn=DerivativeFinite} then implies that $f^{(n)} - \sum_{j \in \mathbb{Z}} f_j^{(n)}$ is a well-defined element of $L^\infty(\mathbb{R})$ and the series converges uniformly. Therefore, we may apply $n$ times the integral taken from $0$ to $t$ and set  
   \[
   g(t) := f(t) -  \sum_{k=0}^{n-1} \frac{t^k}{k!} f^{(k)}(0) - \sum_{j \in \mathbb{Z}} (f_j(t) -     \sum_{k=0}^{n-1} \frac{t^k}{k!} f^{(k)}_j(0) ), \qquad t \in \mathbb{R},
   \] 
   where the series converges uniformly on compact subsets of $\mathbb{R}$. 
   Then, 
   \[
   \Vert g^{(n)} \Vert_\infty \leq \Vert f^{(n)} \Vert_\infty + \sum_{j \in \mathbb{Z}} \Vert f_j^{(n)} \Vert_\infty \preceq  \Vert f^{(n)} \Vert_\infty + \Vert f  \Vert_{\Besov_{p^\sharp, p}^{n-1 + \frac{1}{p}}}.
   \]
 Now as $\sum_{j \in \mathbb{Z}}  (f_j(t) -     \sum_{k=0}^{n-1} \frac{t^k}{k!} f^{(k)}_j(0) )$ converges uniformly on compact sets and $\phi$ is a compactly supported wavelet we have that 
 \[
 \langle g, \phi_{j,k} \rangle = 0, \qquad j,k \in \mathbb{Z}. 
 \]
   The vanishing of all wavelet coefficients implies that $g$ is a polynomial $P$, see \cite[Section 6, Theorem 4 (ii)]{Bourdaud}. But as $g^{(n)}$ is uniformly bounded this polynomial $P$ must have a degree at most $n$. We conclude that,
   \[
      f(t) =  P(t)  + \sum_{j \in \mathbb{Z}} ( f_j(t)  -     \sum_{k=0}^{n-1}  \frac{t^k}{k!} f_j^{(k)}(0) ) , \qquad t \in \mathbb{R}. 
   \]
   By construction $\Vert P^{(n)} \Vert_\infty \preceq   \Vert f^{(n)} \Vert_\infty + \Vert f  \Vert_{\Besov_{p^\sharp, p}^{n-1 + \frac{1}{p}}}$.
   
\end{proof}

We now apply the results from the previous section and arrive at our main result.

\begin{proposition}\label{Prop=LocalEstimate}
 
There exists a constant $C>0$ such that for every    $f \in C^n(\mathbb{R})$ with  $\Vert f^{(n)} \Vert_\infty < \infty$ we have
\[
\Vert  f_j^{[n]}   \Vert_{\mathfrak{m}_{p_1, \ldots,  p_n}} \leq C 2^{j(n + \frac{1 }{p } - 1)}\Vert f_j \Vert_{p^\sharp}. 
\]
\end{proposition}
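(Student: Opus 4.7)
The plan is to recognise $f_j$ as a function of the form $\phi_{\alpha,\lambda}$ from \eqref{Eqn=PhiAlpha}, apply Corollary \ref{Cor=MainWaveletEstimate}, and then convert the resulting $\ell^{p^\sharp}$-bound on the coefficient sequence into the claimed $L^{p^\sharp}$-bound on $f_j$ via the standard $L^{p^\sharp}$-stability of a compactly supported wavelet expansion.

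First I would combine \eqref{Eqn=WaveFj} with $\phi_{j,k}(t) = 2^{j/2}\phi(2^j t - k)$ to write
\[
f_j(t) = \sum_{k \in \mathbb{Z}} \alpha_k\, \phi(2^j t - k), \qquad \alpha_k := 2^{j/2}\langle f,\phi_{j,k}\rangle,
\]
so that $f_j = \phi_{\alpha, 2^j}$ in the notation of \eqref{Eqn=PhiAlpha}. The hypothesis $\phi \in C^\beta_c(\mathbb{R})$ with $n+1 < \beta p$ puts us exactly in the setting of Corollary \ref{Cor=MainWaveletEstimate}, which then yields
\[
\Vert f_j^{[n]}\Vert_{\mathfrak{m}_{p_1,\ldots,p_n}} \leq C\, 2^{jn}\,\Vert \alpha\Vert_{\ell^{p^\sharp}}.
\]

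The only remaining task is to compare $\Vert\alpha\Vert_{\ell^{p^\sharp}}$ with $\Vert f_j\Vert_{p^\sharp}$. Because $\phi$ has compact support, the family $\{\phi(2^j\cdot - k)\}_k$ at fixed scale $j$ has uniformly bounded overlap, and the standard $L^{p^\sharp}$-stability of compactly supported wavelet expansions, as recorded in \cite[Eqn.~(4.3)]{McDonaldSukochev} (and already invoked in the proof of Lemma \ref{Lem=BesovApprox}), gives
\[
\Vert f_j\Vert_{p^\sharp} \approx_\phi 2^{j(\frac{1}{2} - \frac{1}{p^\sharp})}\, \Vert (\langle f,\phi_{j,k}\rangle)_k\Vert_{\ell^{p^\sharp}}.
\]
Since $\frac{1}{p^\sharp} = \frac{1}{p} - 1$, this rearranges to $\Vert\alpha\Vert_{\ell^{p^\sharp}} = 2^{j/2}\Vert (\langle f,\phi_{j,k}\rangle)_k\Vert_{\ell^{p^\sharp}} \approx 2^{j(\frac{1}{p} - 1)}\Vert f_j\Vert_{p^\sharp}$. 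Substituting this into the previous display produces the bound $\Vert f_j^{[n]}\Vert_{\mathfrak{m}_{p_1,\ldots,p_n}} \leq C' 2^{j(n + \frac{1}{p} - 1)}\Vert f_j\Vert_{p^\sharp}$.

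There is no serious obstacle here: the analytic content has been packaged into Corollary \ref{Cor=MainWaveletEstimate}, and the only external input is the wavelet stability estimate, which is a standard consequence of compact support and is already cited elsewhere in the paper. The remainder is bookkeeping of the scaling exponents.
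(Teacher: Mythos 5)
Your proof is correct and is essentially the same as the paper's: identify $f_j$ with $\phi_{\alpha,2^j}$ via \eqref{Eqn=WaveFj} and \eqref{Eqn=PhiAlpha}, apply Corollary~\ref{Cor=MainWaveletEstimate} to get a $2^{jn}\Vert\alpha\Vert_{\ell^{p^\sharp}}$ bound, and convert $\Vert\alpha\Vert_{\ell^{p^\sharp}}$ to $2^{j(1/p-1)}\Vert f_j\Vert_{p^\sharp}$ by the standard single-scale wavelet stability estimate. The only cosmetic discrepancy is the citation: the paper invokes \cite[Lemma~4.1.2]{McDonaldSukochev} (and \cite[Proposition~6.10.7]{MeyerBook}) for that stability equivalence rather than \cite[Eqn.~(4.3)]{McDonaldSukochev}, but the bookkeeping of the exponents matches the paper exactly.
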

\begin{proof}
We apply Corollary \ref{Cor=MainWaveletEstimate} to the  decompositon \eqref{Eqn=WaveFj} and then use \cite[Lemma 4.1.2]{McDonaldSukochev} (see also \cite[Proposition 6.10.7]{MeyerBook}) to find the following inequalities that hold up to a constant independent of $j$, 
\[
\Vert  f_j^{[n]}   \Vert_{\mathfrak{m}_{p_1, \ldots,  p_n}} \preceq 2^{(n+\frac{1}{2})j} \left( \sum_{k \in \mathbb{Z}}  \vert \langle f, \phi_{j,k}  \rangle \vert^{p^\sharp} \right)^{\frac{1}{p^\sharp}} \approx 2^{(n+\frac{1}{2})j} 2^{j(\frac{1}{p^\sharp} - \frac{1}{2})}   \Vert f_j \Vert_{p^\sharp}  = 2^{j(n + \frac{1 }{p } - 1)} \Vert f_j \Vert_{p^\sharp}. 
\]
\end{proof}

 We now come to the main theorem, for which we recall all conditions to state it in a self-contained matter. 

\begin{theorem}\label{Thm=Main}
Let $n \in \mathbb{N}_{\geq 1}$, let $1 \leq p_1, \ldots, p_n < \infty$ and set $p := (p_1; \ldots ; p_n)$.
 Assume further that, 
 \begin{equation} 
 1 \leq (p_2; \ldots ; p_n), (p_1; \ldots ; p_{n-1}) < \infty.
 \end{equation}
There exists a constant $C>0$ such that for every   $f \in C^n(\mathbb{R}) \cap B_{p^\sharp, p}^{n-1 + \frac{1}{p}}$ with  $\Vert f^{(n)} \Vert_\infty < \infty$ we have,    
\[
\Vert  f^{[n]}   \Vert_{\mathfrak{m}_{p_1, \ldots,  p_n}} \leq C( \Vert f^{(n)} \Vert_\infty +  \Vert f \Vert_{B_{p^\sharp, p}^{n-1 + \frac{1}{p}}}).
\]
\end{theorem}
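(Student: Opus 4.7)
The plan is to combine the wavelet decomposition of Lemma \ref{Lem=BesovApprox} with the single-scale estimate of Proposition \ref{Prop=LocalEstimate}, exploiting that the $n$-th divided difference annihilates the low-order Taylor polynomials and sends degree-$n$ polynomials to constants (which are trivially multiplier-bounded via H\"older).

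First, I would apply Lemma \ref{Lem=BesovApprox} to write
\[
f(t) = P(t) + \sum_{j \in \mathbb{Z}}\Bigl( f_j(t) - \sum_{k=0}^{n-1} \tfrac{t^k}{k!} f_j^{(k)}(0) \Bigr),
\]
where $P$ is a polynomial of degree at most $n$ with $\Vert P^{(n)}\Vert_\infty \preceq \Vert f^{(n)}\Vert_\infty + \Vert f\Vert_{\Besov^{n-1+1/p}_{p^\sharp,p}}$, and the series converges uniformly on compact subsets of $\mathbb{R}$. Since divided differences of order $n$ vanish on polynomials of degree $< n$, the Taylor polynomials $t \mapsto \sum_{k=0}^{n-1} \tfrac{t^k}{k!} f_j^{(k)}(0)$ contribute nothing, and $P^{[n]}$ is the constant $c$ with $|c| \leq \tfrac{1}{n!}\Vert P^{(n)}\Vert_\infty$. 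Passing to divided differences pointwise (which is legitimate since the sum converges uniformly on compacts and each term is $C^n$) yields
\[
f^{[n]}(t_0,\ldots,t_n) = c + \sum_{j \in \mathbb{Z}} f_j^{[n]}(t_0,\ldots,t_n).
\]

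Next I would estimate each piece in the Schur multiplier (quasi-)norm. The constant symbol $c$ gives the multilinear map $(x_1,\ldots,x_n) \mapsto c\, x_1 \cdots x_n$, which by H\"older's inequality for Schatten classes has norm bounded by $|c| \leq \tfrac{1}{n!}\Vert P^{(n)}\Vert_\infty$, hence controlled by $\Vert f^{(n)}\Vert_\infty + \Vert f\Vert_{\Besov^{n-1+1/p}_{p^\sharp,p}}$. For the remaining series, I would invoke the quasi-triangle inequality $\Vert \cdot \Vert_{\mathfrak{m}_{p_1,\ldots,p_n}}^p$ (valid since $0 < p \leq 1$) together with Proposition \ref{Prop=LocalEstimate}:
\[
\Bigl\Vert \sum_{j \in \mathbb{Z}} f_j^{[n]} \Bigr\Vert_{\mathfrak{m}_{p_1,\ldots,p_n}}^p
\leq \sum_{j \in \mathbb{Z}} \Vert f_j^{[n]} \Vert_{\mathfrak{m}_{p_1,\ldots,p_n}}^p
\preceq \sum_{j \in \mathbb{Z}} 2^{jp(n-1+1/p)} \Vert f_j \Vert_{p^\sharp}^p.
\]
Setting $s = n-1 + \tfrac{1}{p}$ and applying Theorem \ref{Thm=Besov} with $q=p$ (the hypothesis $\beta p > n+1$ ensures $\beta > |s|$, so the wavelet characterization is applicable), the last sum is comparable to $\Vert f \Vert_{\Besov^{n-1+1/p}_{p^\sharp,p}}^p$.

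Combining the two estimates and taking $p$-th roots yields the desired bound. The main technical point to be careful about is justifying that the pointwise decomposition $f^{[n]} = c + \sum_j f_j^{[n]}$ can be translated into a decomposition of the associated Schur multiplier and that the infinite sum makes sense in $\mathfrak{m}_{p_1,\ldots,p_n}$; this follows from the absolute convergence of $\sum_j \Vert f_j^{[n]}\Vert_{\mathfrak{m}}^p$ established above together with the density of finite rank operators in each $S_{p_i}$, on which the Schur multiplier $T_{f^{[n]}}$ can be evaluated by its defining kernel formula using uniform convergence on compacts.
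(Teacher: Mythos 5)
Your proposal follows essentially the same route as the paper's proof: apply Lemma \ref{Lem=BesovApprox} to get the decomposition $f = P + \sum_j (f_j - \text{Taylor correction})$, observe that the Taylor corrections vanish under $f \mapsto f^{[n]}$ and $P^{[n]}$ is constant (bounded via H\"older), then combine the $p$-triangle inequality with Proposition \ref{Prop=LocalEstimate} and the wavelet Besov characterization of Theorem \ref{Thm=Besov}. Your normalization $P^{[n]} = P^{(n)}/n!$ is the correct one (the paper's display writes $P^{(n)}(0)$ without the $1/n!$, a harmless slip), and your explicit care about uniform convergence on compacts makes the identification $f^{[n]} = c + \sum_j f_j^{[n]}$ slightly more rigorous than the paper's, but the substance is identical.
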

\begin{proof}  
By Lemma \ref{Lem=BesovApprox} there exists a polynomial $P$  of degree $\leq n$ such that
\[
f(t) = P(t) + \sum_{j \in \mathbb{Z}} (f_j(t) -      \sum_{k=0}^{n-1}  \frac{t^k}{k!} f_j^{(k)}(0)), \qquad t \in \mathbb{R},   
\]
and with $\Vert P^{(n)} \Vert_\infty \preceq \Vert f^{(n)} \Vert_\infty +  \Vert f \Vert_{B_{p^\sharp, p}^{n-1 + \frac{1}{p}}}$. 
Therefore,    
\[
f^{[n]}(t_0, \ldots, t_n) = P^{(n)}(0) +   \sum_{j \in \mathbb{Z}}  f_j^{[n]}(t_0, \ldots, t_n), \qquad t \in \mathbb{R}.
\]
Then applying the   quasi-norm estimate \eqref{Eqn=Quasi-Banach}  and \cite[Theorem 4.1.3]{McDonaldSukochev} (see \cite[Theorem 7.20]{FJW}) and Proposition \ref{Prop=LocalEstimate} gives
\[
\begin{split}
\Vert  f^{[n]}   \Vert_{\mathfrak{m}_{p_1, \ldots,  p_n}}^p \leq &  \vert P^{(n)}(0) \vert^p + \sum_{j \in \mathbb{Z}} \Vert  f^{[n]}_j   \Vert_{\mathfrak{m}_{p_1, \ldots,  p_n}}^p\\
\preceq & (\Vert f^{(n)} \Vert_\infty +  \Vert f \Vert_{B_{p^\sharp, p}^{n-1 + \frac{1}{p}}} )^p +
 \sum_{j \in \mathbb{Z}}   2^{j((n-1)p + 1)} \Vert f_j \Vert_{p^\sharp}^p \\
 \approx & (\Vert f^{(n)} \Vert_\infty +  \Vert f \Vert_{B_{p^\sharp, p}^{n-1 + \frac{1}{p}}} )^p + \Vert f \Vert_{B_{p^\sharp, p}^{n-1 + \frac{1}{p}}}^p.
 \end{split}
\]
This concludes the proof. 
\end{proof}

\section{Discussion}\label{Sect=Discussion}

We believe our main Theorem \ref{Thm=Main} gives a satisfying answer to the boundedness of multilinear operators of divided differences in case $p=1$. In the case $p \in (0, 1)$ we have obtained the first genuinely noncommutative multi-linear result where the recipient space is a quasi-Banach $L^p$-space.   What remains open is whether   our assumptions on  $p_1, \ldots, p_n$ can be relaxed upon at the expense of putting stricter regularity conditions on $\alpha$ (see Section \ref{Sect=CoreEstimate}) and therefore more regularity on our Besov space exponents. We believe such a statement should hold for general $p_1, \ldots, p_n$ coming from the interval $(0,\infty)$. 
We therefore state  Question \ref{Que=Final}. 

  We recall again that in case $n=1$ a complete answer to this problem was found by Peller \cite{Peller}; its proof is also revisited in \cite{McDonaldSukochev}.

\begin{question}\label{Que=Final}
Let $n \geq 2$. Suppose that $0< p_1, \ldots, p_n < \infty$ and let $p = (p_1; \ldots; p_n)$. 
Find parameters $a,b \in (0, \infty), s \in [1, \infty)$ such that for every $f \in \Besov_{a,b}^s$  we have 
\[
\Vert f^{[n]} \Vert_{\mathfrak{m}_{p_1, \ldots, p_n}} \preceq \Vert f^{(n)} \Vert_\infty + \Vert f \Vert_{\Besov^s_{a,b}}.
\]
\end{question}

  There are a number of difficulties emerge in attempts to address  Question  \ref{Que=Final}. The most fundamental one seems to be that  we are not able to find a multilinear analogue of the automatic complete boundedness of Schur multipliers in the quasi-Banach range as in \cite[Lemma 4.3.3 and Appendix A]{McDonaldSukochev}. This type of discretization is essential as (nonzero)   multiplication operators $\sum_{k \in \mathbb{Z}} \alpha_k \chi_{[k, k+1)} \in B(L^2(\mathbb{R}))$ do not belong to any Schatten $S_p$ class whereas their discrete analogs  $\sum_{k \in \mathbb{Z}} \alpha_k \delta_k \in  B(\ell^2(\mathbb{Z}))$ are in $S_p$  if and only if  $(\alpha_k)_{k \in \mathbb{Z}}$ is in $\ell^p$.

\end{document}